\newcolumntype{R}{>{$}r<{$}}
\newcolumntype{L}{>{$}l<{$}}
\newcolumntype{M}{R@{${}\le{}$}L}
\newcolumntype{E}{R@{${}={}$}L}
\newcolumntype{D}{R@{${}:={}$}L}
\long\def\killtext#1{}
\newtheorem{theorem}{Theorem}[section]
\newtheorem{proposition}[theorem]{Proposition}
\newtheorem{lemma}[theorem]{Lemma}
\newtheorem{corollary}[theorem]{Corollary}
\newtheorem{definition}{Definition}[section]
\newtheorem{remark}[theorem]{Remark}
\newtheorem{exa}[theorem]{Example}
\newtheorem{assumption}[theorem]{Assumption}
\newenvironment{proofx}{\noindent{\bf Proof.}}{}
\newenvironment{proof*}[1]{\noindent{\bf Proof of #1.}}{\hfill$\blacksquare$\medskip}
\def\eps{\varepsilon}\def\cvar{{\sf CVaR}}
\def\AA{\mathcal{A}}\def\BB{\mathcal{B}}
\def\HH{\mathcal{H}}
\def\LL{\mathcal{L}}
\def\MM{\mathcal{M}}
\def\Gbf{\mathbf{G}}
\def\Ebb{\mathbb{E}}
\def\Rbb{\mathbb{R}}
\def \zero{\boldsymbol{0}}
\def \prox{\textbf{prox}}
\DeclarePairedDelimiter\abs{\lvert}{\rvert}
\DeclarePairedDelimiter{\bracket}{ [ }{ ] }
\DeclarePairedDelimiter{\paran}{(}{)}
\DeclarePairedDelimiter{\braces}{\lbrace}{\rbrace}
\DeclarePairedDelimiterX{\gnorm}[3]{\lVert}{\rVert_{#2}^{#3}}{#1}
\DeclarePairedDelimiterX{\inprod}[2]{\langle}{\rangle}{#1, #2}
\DeclarePairedDelimiterX{\inprodo}[3]{\langle}{\rangle_{#3}}{#1, #2}
\providecommand{\br}[1]{{\left(#1\right)}}
\providecommand{\sbr}[1]{{\ast\left(#1\right)}}
\newcommand{\tsum}{\textstyle{\sum}}
\newcommand{\wt}[1]{{\widetilde{#1}}}
\newcommand{\wb}[1]{{\widebar{#1}}}
\newcommand{\wh}[1]{{\widehat{#1}}}
\newcommand{\relu}[1]{\left [#1\right]_+}
\newcommand\tab[1][1cm]{\hspace*{#1}}
\newcommand\grad{\nabla}
\newcommand\subgrad[1]{#1'}
\newcommand\pgrad{\partial}
\newcommand{\mleq}[1]{\stackrel{\mathclap{\mbox{\normalfont \scriptsize #1}}}{\leq}}
\DeclareMathOperator*{\argmin}{argmin}
\DeclareMathOperator*{\dom}{dom}
\DeclareMathOperator*{\inte}{int}
\DeclareMathOperator*{\rinte}{rint}
\DeclareMathOperator*{\lin}{lin}
\def\eqnok#1{(\ref{#1})}
\numberwithin{equation}{section}
\newlength{\maxwidth}
\newcommand{\algalign}[2]
{\makebox[\maxwidth][r]{$#1{}$}${}#2$}
\begin{document}
\title{\Large\bf {Stochastic First-order Methods for Convex and \\
Nonconvex Functional Constrained Optimization}
\thanks{
Boob and Lan was partially supported by the NSF grant CCF 1909298. Deng was partially supported by the NSFC grant 11831002.}}
\author{ 
	Digvijay Boob \thanks{digvijaybb40@gatech.edu, Industrial and Systems Engineering, Georgia Institute of Technology.} \and 
	Qi Deng \thanks{qideng@sufe.edu.cn, School of Information Management and Engineering, Shanghai University of Finance and Economics.} \and 
		Guanghui Lan \thanks{george.lan@isye.gatech.edu, Industrial and Systems Engineering, Georgia Institute of Technology.}}
\date{\vspace{-5ex}}
\maketitle

\begin{abstract}
Functional constrained optimization is becoming more and more important in machine learning and operations research. Such problems have potential applications in risk-averse machine learning, semisupervised learning and robust optimization among others. In this paper, we first present a novel Constraint Extrapolation (ConEx) method for solving convex functional constrained problems, which utilizes linear approximations of the constraint functions to define the extrapolation (or acceleration) step. We show that this method is a unified algorithm that achieves the best-known rate of convergence for solving different 
functional constrained convex composite problems, including convex or strongly convex, and smooth or nonsmooth problems with stochastic objective and/or stochastic constraints. Many of these rates of convergence were in fact obtained for the first time in the literature. In addition, ConEx is a single-loop algorithm that does not involve any penalty subproblems. Contrary to existing primal-dual methods, it does not require the projection of Lagrangian multipliers into a (possibly unknown) bounded set. {Second, for  nonconvex functional constrained problems, we introduce a new proximal point method which transforms the initial nonconvex problem into a sequence of convex 
 problems by adding quadratic terms to both the objective and constraints. Under certain MFCQ-type assumption, we establish the convergence and rate of convergence of this method to KKT points  when the convex subproblems are solved exactly or inexactly. For large-scale and stochastic problems,  we present a more practical proximal point method in which the approximate solutions of the subproblems are computed by the aforementioned ConEx method. Under a strong feasibility assumption, we establish the total iteration complexity of ConEx required by this inexact proximal point method for a variety of problem settings, including nonconvex smooth or nonsmooth problems with stochastic objective and/or stochastic constraints. To the best of our knowledge, most of these convergence and complexity results of the proximal point method for nonconvex problems also seem to be new in the literature.}

\vspace{0.1in}

\noindent {\bf Keywords:} functional constrained optimization, stochastic algorithms, convex and nonconvex optimization, acceleration.

\end{abstract}

\vspace{0.1in}

\noindent {\bf AMS 2000 subject classification:} 90C25, 90C06, 90C22, 49M37

\allowdisplaybreaks

\section{Introduction}\label{sec:intro}
In this paper, we study the following  composite optimization problem with functional constraints:
\begin{equation}\label{main-prob}
\begin{split}
\min_{x \in X} \tab & \psi_0(x) :=  f_0(x) + \chi_0(x) \\[-1.5mm]
\text{s.t.} \tab & \psi_i(x) := f_{i}(x) + \chi_{i}(x) \le 0, \quad i=1,\dots,m.	
\end{split}
\end{equation}
Here, $X \subseteq \Rbb^n$ is a convex compact set,  $f_0: X \to \Rbb$ and  $f_i: X \to \Rbb,\ i =1, \dots, m$ are continuous functions 
which are not necessarily convex, 
 and $\chi_i:X\rightarrow \mathbb{R}, i =0,1, \dots, m$ are convex and continuous functions.
Problem~\eqnok{main-prob} covers different convex and nonconvex settings depending on 
the assumptions on $f_i$ and $\chi_i$,  $i = 0, \dots, m$.

In the convex setting, we assume that $f_i$, $i=0, \ldots, m$, are convex or strongly convex functions,
which can be either smooth, nonsmooth or the sum of smooth and nonsmooth
components. 

We also assume that $\chi_i$, $i = 0, \ldots, m$, are ``simple" functions in the sense that, for any given vector $v \in \Rbb^n$ and non-negative weight vector $w \in \Rbb^m$, 
a certain proximal operator associated with the function $\chi_0(x) + \tsum_{i=1}^m w_i\chi_i(x) + \inprod*{v}{x}$ can be computed efficiently. 
For such problems, 
Lipschitz smoothness properties of $\chi_i$'s is of no consequence due to the simplicity of this proximal operator.

For the nonconvex case, 
we assume that $f_i$, $i =0, \ldots, m$, are smooth functions, which 
are not necessarily convex, but satisfying 
a certain lower curvature condition (c.f. \eqref{eq:weak-convexity}).
However, we do not put the simplicity assumption about the proximal operator associated with
convex functions $\chi_i$, $i=0, \ldots, m$,
in order to cover a broad class of nonconvex problems, including those with non-differentiable objective 
functions or constraints. 

Constrained optimization problems of the above form are prevalent in data science. One such example arises from risk averse machine learning.
Let $\ell(\cdot, \xi):\Rbb^n \times \Xi \to \Rbb$ models the loss for a random data-point $\xi \in \Xi$.  Our goal is to
minimize a certain risk measure~\cite{rockafellar00cvar,shapirobook}, e.g., the so-called conditional value at risk that penalizes only the positive deviation of the loss function, subject
to the constraint that the expected loss is less than a threshold value. Therefore, one can formulate
this problem as 
\begin{equation}\label{eq:motivation_pb_1}
\begin{split}
\min_{x \in X} \tab &\cvar [\ell(x,\omega)]\\[-1.5mm]
\text{s.t.}\tab &\Ebb [\ell(x,\omega)] \le c,
\end{split}
\end{equation}
where $\cvar$ denotes conditional value at risk
and $c$ is the tolerance on the average loss that one can consider as acceptable. 
In many practical situations, the loss function $\ell(x, \omega)$ is nonconvex w.r.t. $x$.
Other examples of problem~\eqref{main-prob} can also be found in semi-supervised learning, where one would like to
minimize the loss function defined over the labeled samples, subject to certain proximity type constraints
for the unlabeled samples.

There exists a variety of literature on solving convex functional constrained optimization problems \eqref{main-prob}. 
One research line focuses on primal methods, e.g., cooperative subgradient methods~\cite{Polyak67,lan-zhou2016stoch} 
and level-set methods \cite{LNN,nesterov2018lectures,lin2018level,aravkin2018level,lin_svrg}, which do not involve the Lagrange multipliers. 
One possible limitation of these methods is the difficulty to directly achieve accelerated
rate of convergence when the objective or constraint functions are smooth. 
Constrained convex optimization problems can also be solved by reformulating them as saddle point problems which will then be solved 
by using primal-dual type algorithms (see \cite{Nemirovski2005mirror,aybat2018primal}).
The main hurdle for existing primal-dual methods exists
in that they require the projection of dual multipliers onto a ball whose diameter
is usually unknown.
Other approaches for constrained convex problems include
the classical exact penalty, quadratic penalty and augmented Lagrangian methods~\cite{bertsekasnonlinear,LanMon13-1,LanMon16-1,Xu2019-1}.
These approaches however require the solutions of penalty subproblems and hence
are more complicated than primal and primal-dual methods. Recently, 
research effort has also been directed to 
stochastic optimization problems with functional constraints \cite{lan-zhou2016stoch,aravkin2018level}. 
In spite of many interesting findings, existing methods for solving these problems are still limited: a) many primal methods solve only stochastic
problems with deterministic constraints~\cite{lan-zhou2016stoch},
and  the convergence for accelerated primal-dual methods \cite{Nemirovski2005mirror,aybat2018primal}  has not been studied
for stochastic functional constrained problems; and
b) a few algorithms for solving problems with expectation constraints  require either
a constraint evaluation step~\cite{lan-zhou2016stoch}, 
or stochastic lower bounds on the optimal value~\cite{aravkin2018level}, 
thus relying on a light-tail assumption for the stochastic noise and 
conservative sampling estimates based on Bernstein inequality.
Some other algorithms require even more restrictive assumptions that the
noise associated with stochastic constraints has to be bounded~\cite{YuNeelyWei17}.

The past few years has also seen a resurgence of interest 
in the design of efficient algorithms 
 for nonconvex stochastic optimization, especially for stochastic and finite-sum problems
due to their importance in machine learning.
Most of these studies need to assume that the constraints are convex, and
focus on the analysis of  iteration complexity,
i.e.,  the number of iterations required to find an approximate stationary point, as well as possible ways to accelerate convergence to
such approximate solutions. 
If the nonconvex functional constraints do not appear, one approach for solving \eqref{main-prob}
is to directly generalize stochastic gradient descent type methods (see \cite{GhaLan12,RN97,reddi2016stochastic,allen-zhu2016variance,FangLiLinZhang18-1,ZhouPanGu2008,WangJiZhouLiangTa18-1,NguyeLiuSchTak17-1,PhamNguyenPhanTran18-1,Lan19})
for solving problems with nonconvex objective functions.
An alternative approach is to indirectly utilize convex optimization methods
within the framework of proximal-point methods
which transfer nonconvex optimization problems into a series
of convex ones (see \cite{guler1992new,bertsekas2015convex,RN304,davis2017proximally,kong2018complexity,lan2018accelerated,rafique2018convex,lee19nonconvexminmax}).
While direct methods are simpler and hence easier to implement, indirect methods may provide stronger theoretical performance
guarantees under certain circumstances, e.g., when the problem has a large conditional number, many components and/or multiple blocks~\cite{lan2018accelerated}. 
However, if nonconvex functional constraints $ \psi_i(x) \le 0$ do appear in \eqref{main-prob},
the study on its solution methods is scarce.
While there is a large body of work on the asymptotic analysis and the  optimality conditions of penalty-based approaches  for general constrained 
nonlinear programming (for example, see \cite{bertsekasnonlinear,martinez2003a,andreani2018strict,andreani2011on,Quoc2011}), only a few works   discussed
the complexity of these methods for solving problems with nonconvex
functional constraints \cite{cartis2014on,WangMaYuan17-1,Facchinei19}. However, these techniques are not applicable to our setting because they cannot guarantee the feasibility of the generated
solutions, but certain local non-increasing properties for the constraint functions. On the other hand, 
the feasibility of the nonconvex functional constraints appear to be important in our problems of interest.

In this paper, we attempt to address some of the aforementioned significant issues associated with both convex and nonconvex
functional constrained optimization. 
Our contributions mainly exist in the following several aspects.

First, for solving convex functional constrained problems, we present
a novel primal-dual type method, referred to as the 
 Constraint Extrapolation (ConEx) method.
 One distinctive feature of this method from existing primal-dual methods is that it 
utilizes linear approximations of the constraint functions to define the extrapolation (or acceleration/momentum) step. 
As a consequence, contrary to the well-known Nemirovski's mirror-prox method~\cite{Nemirovski2005mirror}
and an interesting primal-dual method recently developed by Hamedani and Aybat~\cite{aybat2018primal}, ConEx does not require the projection 
of Lagrangian multipliers onto a (possibly unknown) bounded set. In addition, ConEx is a single-loop algorithm that does not involve any penalty subproblems.
Due to the built-in acceleration step, this method can explore problem structures and hence achieve
better rate of convergence than primal methods. In fact,
we show that this method is a unified algorithm that achieves the best-known rate of convergence for solving different convex
functional constrained problems, including convex or strongly convex, and smooth or non-smooth problems with stochastic objective and/or stochastic constraints.

\begin{table}[H]
	\centering
\begin{tabular}{|c|c|c|c|c|}\hline
	&\multicolumn{2}{c|}{Strongly convex \eqref{main-prob}} &\multicolumn{2}{c|}{Convex \eqref{main-prob}}\\ \hline
	Cases& Smooth* & Nonsmooth & Smooth* & Nonsmooth \\ \hline
	Deterministic & $O(1/\sqrt{\eps})$ & $O(1/\eps)$ & $O(1/{\eps})$ & $O(1/\eps^2)$ \\ \hline
	Semi-stochastic & $O(1/{\eps})$ & $O(1/\eps)$ & $O(1/\eps^2)$ & $O(1/\eps^2)$\\ \hline
	Fully-stochastic & $O(1/{\eps^2})$ & $O(1/\eps^2)$& $O(1/\eps^2)$ & $O(1/\eps^2)$\\ \hline
\end{tabular}
\caption{Different convergence rates of the ConEx method for strongly convex/convex, and smooth/nonsmooth objective and/or constraints. 
Deterministic means both objective and constraints are deterministic, semi-stochastic means objective is stochastic but constraints are deterministic, 
fully-stochastic means both objective and constraints are stochastic.\\
{(*) Results for the smooth case hold if $B \ge \gnorm{y^*}{2}{}+1$. Otherwise, the ConEx method converges at the rate of nonsmooth problems.}  
}\label{table1}
\end{table}
Table~\ref{table1} provides a brief summary for the iteration complexity of the ConEx method for solving different
functional constrained problems. For the strongly convex case, ConEx can obtain convergence to an $\eps$-approximate solution
 (i.e., optimality gap and infeasibility are $O(\eps)$) as well as convergence of the last iterate to the optimal solution. 
 The complexity bounds provided in Table~\ref{table1} for the strongly convex case hold for both types of convergence criterions. 
 For semi- and fully-stochastic case, we use the notion of expected convergence instead of exact convergence used in the deterministic case.
It should be noted that in Table~\ref{table1}, we ignore the impact of various Lipschitz constants and/or stochastic noises for the sake
of simplicity. In fact, the ConEx method achieves quite a few new complexity results by
reducing the impact of these Lipschitz constants. Moreover, to the best of our knowledge, it attains for the first time
 the optimal iteration and sampling
complexity for solving general stochastic constrained problems without requiring
the boundedness or light-tail assumptions on the
stochastic subgradients (see Theorems~\ref{cor:step_size_strong_cvx} and~\ref{thm:convergence_convex}
and discussions afterwards).
Even though ConEx is a primal-dual type method, 
we can show its convergence irrespective of the knowledge of the optimal Lagrange multiplier {$y^*$}, as it does not require the projection of multipliers onto the ball. 
In particular, convergence rates of the ConEx method for nonsmooth cases (either convex or strongly convex) in Table \ref{table1} holds irrespective of the knowledge of the optimal Lagrange multipliers. 
For smooth cases, if certain parameter {$B$ is not big enough, i.e., $B < \gnorm{y^*}{2}{}+1$}, then 
it converges at the rates for nonsmooth problems of the respective case. As one can see from Table \ref{table1}, such a change would cause a suboptimal convergence rate in terms of $\eps$ only for the deterministic case,
but complexity will be the same for both
semi- and fully-stochastic cases.
It is worth mentioning that faster convergence rates for the smooth deterministic case can still be attained by incorporating certain line search procedures.
To the best of our knowledge, this is the first time in the literature that a simple single-loop
algorithm was developed for solving all different types of convex functional constrained problems
in an optimal manner.
  
Second, 
we extend the ConEx method for the
 nonconvex setting and present a new framework of proximal point method for solving the nonconvex functional  constrained  optimization problems,
which otherwise seem to be difficult to solve by using direct approaches. 
The key component of our method is to exploit the structure of the nonconvex objective and constraints $\psi_i$, $i =0, \ldots, m$, thereby turning 
the original problem into a sequence of functional constrained subproblems with a strongly convex objective and strongly convex constraints. 
We show that if the proximal point method has a strictly feasible initial solution and its subproblem is exactly solved, then the whole generated sequence remains strictly feasible. Hence, Slater's condition guarantees the existence of Lagrange multipliers and strong duality for each subproblem.
Moreover, we show that the need of an exact optimal subproblem solution can be relaxed by a termination criterion based on the distance to the optimal solution, leading to a more general inexact proximal point method that still preserves the appealing property of strict feasibility.
Under the Mangasarian-Fromovitz constraint qualification (MFCQ), we show that the inexact proximal point method converges  asymptotically to the KKT points, and provide the first iteration complexity of such proximal point method. More specific, we show that this method requires $O(1/\eps)$ iterations to obtain an appropriately defined 
$(\eps,\eps^2)$-KKT point (see Theorem~\ref{thm:uniform-bound-and-rate} and discussions afterwards). 

For large-scale and stochastic optimization, 
 we propose an inexact proximal point method for which the subproblems are approximately solved by first-order methods such as ConEx. 
 However, due to the optimization challenge in this setting, it is in general difficult to obtain highly accurate solutions such that the whole sequence remains strictly feasible.
To overcome this difficulty, we propose a new and verifiable strong feasibility assumption  which alleviates the need of generating strict feasible solutions. We develop the inexact proximal point method using a termination criterion in terms of the functional optimality gap and the constraint violation, showing that this method requires $O(\Delta/\eps)$ iterations to obtain some $(\eps,\eps)$-KKT point solutions, where $\Delta$ depends on the inexactness errors summed over the iterations. When the proximal point subproblems are solved by ConEx, we present the total iteration count of ConEx to achieve the approximate  solutions under different smooth and nonsmooth settings (see Theorem~\ref{thm:inexact-pp}, Corollary~\ref{cor:iteration_complexity_inexact}
and discussions afterwards).

Close to the completion of our paper, we notice 
that Ma et. al.~\cite{MaLinYang19} also worked independently
on the analysis of the proximal-point methods for nonconvex functional constrained problems.
In fact, the initial version of ~\cite{MaLinYang19} 
was released almost at the same time as ours. In spite of some overlap, there exist
a few essential differences between our work and~\cite{MaLinYang19}.  
First, we establish the convergence/complexity of
the proximal point method under a variety of constraint qualification conditions, including
MFCQ, a stronger notion of MFCQ (Assumption \ref{ass:strong-mangafromo}), and strong feasibility.
Hence, our work covers a broader class of nonconvex problems,
while \cite{MaLinYang19}  only consider problems that satisfy a uniform Slater's condition. Strong feasibility condition is stronger than the
uniform Slater's condition  but it is easier to verify.
 Second,  \cite{MaLinYang19} uses a different definition of subdifferential than ours and
 the definition of the KKT conditions in \cite{MaLinYang19}  comes from convex optimization problems.
 While it is unclear under what constraint qualification this KKT condition is necessary for local optimality,
it is possible to put their problem into our composite framework
in \eqnok{main-prob} and compute the subdifferential that provably yields our KKT condition 
under the aforementioned MFCQ.
Third, for solving the convex subproblems we provide a unified
algorithm, i.e., ConEx, that
can achieve the best-known rate of convergence for solving
different problem classes, including deterministic, semi-
and fully-stochastic, smooth and nonsmooth problems.
On the other hand, 
different methods were suggested for 
solving different types of problems in \cite{MaLinYang19}. 
In particular, a variant of the switching subgradient method, which was firstly presented by Polyak in \cite{Polyak67} for the general convex case,
and later extended by~\cite{lan-zhou2016stoch} for the stochastic and strongly convex
cases, was suggested for solving deterministic problems. For the stochastic case they directly apply the algorithm in \cite{YuNeelyWei17} and
hence require stochastic gradients to be bounded. These subgradient methods
do not necessarily yield the best possible rate of convergence if the objective/constraint
functions are smooth or contain certain smooth components.

\paragraph{Outline}This paper is organized as follows. Section~\ref{sec-notn} describes notation and terminologies. Section~\ref{sec:saddle_reformulation} exclusively deals with the ConEx method for solving problem~\eqref{main-prob} in the convex setting. 
Section~\ref{subsec:main_theorems} states the main convergence results of the ConEx method and Section~\ref{subsec:conv_analysis} shows the details of the convergence analysis.
Section~\ref{sec-alg1} presents  exact and inexact proximal point methods for solving problem~\eqref{main-prob} in the nonconvex setting. Section~\ref{sec_exact_ppt} and \ref{sec:inexact_ppt} establishes their convergence behavior and iteration complexity under the MFCQ type assumptions.  
Section~\ref{sec:inexact-strong-feas} investigates the inexact proximal point method under the strong feasibility assumption, and shows an overall iteration complexity result
when the subproblems are solved by the ConEx method.

\subsection{Notation and terminologies \label{sec-notn}}

\setlength{\abovedisplayskip}{2pt}\setlength{\belowdisplayskip}{0.8pt}Throughout the paper, we use the following notations. Let  $[m] := \{1 , \dots, m\}$, $\psi(x) := [\psi_{1}(x), \dots, \psi_{m}(x)]^T$, $f(x) :=[f_1(x), \dots, f_m(x)]^T$ and $\chi(x) := [\chi_1(x), \dots, \chi_m(x)]^T$ and the constraints in \eqref{main-prob} be expressed as $\psi(x)\le \zero$. Here bold $\zero$ denotes the vector of elements $0$. Size of the vector is left unspecified whenever it is clear from the context.
$\gnorm{\cdot}{}{}$ denotes a general norm and $\gnorm{\cdot}{\ast}{}$ denotes its dual norm defined as $\gnorm{z}{\ast}{} := \sup\{z^Tx : \gnorm{x}{}{} \le 1\}$. From this definition, we obtain the $a^Tb \le \gnorm{a}{}{} \gnorm{b}{\ast}{}$. Euclidean norm is denoted as $\gnorm{\cdot}{2}{}$ and standard inner product is denoted as $\inprod{\cdot}{\cdot}$. Let $\BB^2(r) := \{x: \gnorm{x}{2}{} \le r \}$ be the Euclidean ball of radius $r$ centered at origin. Non-negative orthant of this ball is denoted as $\BB^2_+(r)$.
For a convex set $X$, we denote the normal cone at $x \in X$ as $N_X(x)$ and its dual cone as $N^*_X(x)$, interior as $\inte{X}$ and relative interior as $\rinte{X}$. For a scalar valued function $f$ and a scalar $t$, the notation $\{f\le t \}$ stands for the set $\{x:f(x) \le t\}$. The “$+$” operation on sets denotes the Minkowski sum of the sets. We refer to the distance between two sets $A, B \subset \Rbb^n$ as $d(A,B) := \inf_{a \in A, b \in B} \gnorm{a-b}{}{}$.

$\relu{x} := \max\{x,0\}$ for any $x \in \Rbb$. For any vector $x \in \Rbb^k$, we define $\relu{x}$ as element-wise application of the operator $ \relu{\cdot}$. The $i$-th element of vector $x$ is denoted as $x^{(i)}$ unless otherwise explicitly specified a different notation for certain special vectors.

 A function $r(\cdot)$ is \emph{$\lambda$-Lipschitz smooth} if the gradient $\grad r(x)$ is a $\lambda$-Lipschitz function, i.e. for some $\lambda \ge 0$
\begin{equation*}
\gnorm{\nabla r(x) -\nabla r(y)}{*}{} \le \lambda \gnorm{x-y}{}{}, \ \ \forall x, y\in \dom r.
\end{equation*}
An equivalent form is:
\begin{equation*}
 - \tfrac{\lambda}{2} \gnorm{x-y}{}{2} \le r(x) -r(y) - \inprod{\grad r(y)}{x-y} \le  \tfrac{\lambda}{2} \gnorm{x-y}{}{2}, \tab \forall x, y \in \dom{r}.
\end{equation*} 
Note that in the above relation, the upper and lower curvatures need not be same. A refined version of the above property differentiates between negative and positive curvatures. 
\begin{equation}\label{eq:weak-convexity}
r(y) + \inprod{\grad r(y)}{x-y} - \tfrac{\nu}{2} \gnorm{x-y}{}{2} \le r(x), \tab \forall x, y \in \dom{r}.
\end{equation}
Here, we say that $r$ satisfies \eqref{eq:weak-convexity} with parameter $\nu$ with respect to $\gnorm{\cdot}{}{}$. 
In many cases, it is possible that a convex function $r$ is a combination of Lipschitz smooth and nonsmooth functions. 

Let $\omega: X \to \Rbb$ be continuously differentiable with  $L_\omega$-Lipschitz continuous gradient and $1$-strongly convex with respect to $\gnorm{\cdot}{}{}$. We define the prox-function associated with $\omega(\cdot)$ as
\begin{equation} \label{eq:primal_prox}
W(y, x) := \omega(y)-\omega(x)-\inprod{\grad \omega(x)}{y-x}, \tab \forall x,y \in X.
\end{equation}
Based on the smoothness and strong convexity of $\omega(x)$, we have the following relation
\begin{equation} \label{wxy-bound}
W(y, x)\le 	\tfrac{L_\omega}{2}\|x-y\|^2 \le L_\omega W(x,y), \tab \forall x, y\in X.
\end{equation}
{We define the diameter of the compact set $X$ by}
\begin{equation}\label{eq:diameter}
	{D_X \coloneq \max_{x,y \in X} \sqrt{2W(x,y)}.}
\end{equation}
{It immediately follows from the strong convexity of $\omega(\cdot,\cdot)$ that $\|x-y\|\le D_X$ for any $x,y\in X$.}
For any convex function $h$, we denote the subdifferential as $\pgrad h$ as follows: at a point $x$ in the relative interior of $X$, $\pgrad h$ is comprised of all subgradients $h'$ of $h$ at $x$ which are in the linear span of $X-X$. For a point $x \in X \setminus \rinte{X}$, the set $\pgrad h(x)$ consists of all vectors $h'$, if any, such that there exists $x_i \in \rinte{X}$ and $h'_i \in \pgrad h(x_i),  i =1,2, \dots,$ with $x = \lim\limits_{i \to \infty}x_i,\ h' = \lim\limits_{i \to \infty} h'_i$. With this definition, it is well-known that, if a convex function $h:X\to \Rbb$ is Lipschitz continuous, with constant $\MM$, with respect to a norm $\gnorm{\cdot}{}{}$, then the set $\pgrad h(x)$ is nonempty for any $x\in X$ and
\[h' \in \pgrad h(x) \Rightarrow \abs{ \inprod{h'}{d} } \le \MM \gnorm{d}{}{}, \forall d \in \lin{(X-X)},\] 
which also implies
\[h' \in \pgrad h(x) \Rightarrow \gnorm{h'}{\ast}{} \le \MM,\] 
where $\gnorm{\cdot}{\ast}{}$ is the dual norm. See \cite{Ben-Tal05} for more details.
We say that a function $r(\cdot)$ 
is $\beta$-strongly convex with respect to $W(\cdot,\cdot)$ if 
\begin{equation}
	r(x)\ge r(y) + \inprod{ r'(y)}{x-y}+\beta W(x, y),\tab \forall x, y \in X.
\end{equation}

\section{Constraint Extrapolation for Convex Functional Constrained Optimization} \label{sec:saddle_reformulation}
In this section, we present a novel constraint extrapolation (ConEx) method for solving problem~\eqref{main-prob} in the convex setting.
To motivate our proposed method, consider the equivalent Lagrangian saddle point problem:
\begin{equation}\label{eq:saddle_pb}
\min_{x\in X} \max_{y\ge \zero} \left\{ \LL(x,y):=\psi_0(x)+ \tsum_{i=1}^m y^\br{i} \psi_{i}(x) \right\}.
\end{equation} Let $(x^*,y^*)$ be a \emph{saddle point} solution of \eqref{eq:saddle_pb}, then it satisfies 
\begin{equation}\label{eq:saddle_def}\LL(x^*,y) \le \LL(x^*, y^*) \le \LL(x, y^*),
\end{equation}
for all $x \in X, y \ge \zero$. Moreover, we have that $x^*$ is the optimal solution of \eqref{main-prob}. 
Throughout this section, we assume the
existence of $y^*$ satisfying \eqref{eq:saddle_def}. Denote the optimal value $\psi_0^* := \psi_0(x^*)$.
Then, the following definition describes a widely used optimality measure for the convex problem~\eqref{main-prob}.
\begin{definition} \label{define-apprx-opt}
	A point $\wb{x}\in X$ is called a $(\delta_o, \delta_c)$-optimal solution of problem \eqref{main-prob} if
	\begin{equation*}
	\psi_0(\wb{x})-\psi_0^* \le \delta_0 \quad \text{and} \quad
	\gnorm{\relu{\psi(\wb{x})} }{2}{} \le\delta_c.
	\end{equation*}
	A stochastic $(\delta_o, \delta_c)$-approximately optimal solution satisfies
	\[ \Ebb \bracket{\psi_0(\wb{x})-\psi_0^*}\le \delta_0 \quad \text{and} \quad \Ebb \bracket{ \gnorm{ \relu{\psi(\wb{x})} }{2}{} } \le \delta_c. \]	
\end{definition}
As mentioned earlier, for the convex composite case, we assume that $\chi_i, i =0, \dots, m$, are ``simple" functions in the sense that, for any vector $v \in \Rbb^n$ and nonnegative $w \in \Rbb^m$, we can efficiently compute the following \prox \ operator  
\begin{equation}\label{eq:W-prox}
{\prox(w,v,\wt{x}, \eta) := \argmin\limits_{ x \in X} \braces[\big]{\chi_0(x)+\tsum_{i=1}^m w_i\chi_i(x) + \inprod*{v}{x} + \eta W(x, \wt{x})}.}
\end{equation}
The $\prox$ operator above uses Bregman divergence instead of Euclidean norm used in standard prox operator.

\subsection{The ConEx method} \label{subsec:main_theorems}
ConEx is a single-loop primal-dual type method for functional constrained optimization.
It evolves from the primal-dual methods for solving bilinear saddle point point problems (e.g., \cite{chambolle2011first,chen2014pd,lan2018optrand,Lan2018comm,Lan19}).
Recently Hamedani and Aybat~\cite{aybat2018primal} show that these methods can also 
handle more general functional coupling term. However, as discussed earlier, existing primal-dual methods ~\cite{Nemirovski2005mirror,aybat2018primal} for
general saddle point problems, when applied to functional constrained problems, require the projection
of dual multipliers onto a possibly unknown bounded set in order to ensure the boundedness of the operators, as well as
the proper selection of stepsizes. One distinctive feature of ConEx is to
use value of linearized constraint functions in place of exact function values when defining 
the extrapolation/momentum step. With this modification, we show that the ConEx method still converges
even though the feasible set of $y$ in problem \eqref{eq:saddle_pb} is unbounded.
In addition, we show that ConEx method is a unified algorithm for
functional constrained optimization in the following sense. First, we establish an explicit rate of convergence of the ConEx method for solving functional constrained stochastic optimization problems where either the objective and/or constraints are given in the form of expectation. Second, we consider the composite constrained optimization problem in which the objective function $f_0$ and/or constraints $f_i, i =1, \dots, m$ can be nonsmooth. Third, we consider the two cases of convex or strongly convex objective, $f_0$. For the strongly convex objective, we 
also establish the rate of convergence to the optimal solution $x^*$.

Before proceeding to the algorithm, we introduce the problem setup in more details. 
First, we assume that $f_0$ satisfies the following Lipschitz smoothness and nonsmoothness condition for some constants $L_0, H_0 \ge 0$:
\begin{equation}\label{eq:A3}
f_0(x_1)- f_0(x_2)-\inprod{\subgrad{ f_0}(x_2)}{x_1-x_2}\le \tfrac{L_0}{2}\gnorm{x_1-x_2}{}{2} + H_0\gnorm{x_1-x_2}{}{}
\end{equation}
for all $x_1, x_2 \in X$ and for all $f'_0(x_2) \in \pgrad f_0(x_2)$. For constraints, we make a similar assumption as in \eqref{eq:A3}. Moreover, we make an additional assumption that 
the constraint functions are Lipschitz continuous. In particular, we have 
\begin{equation}\label{eq:A2}
f_{i}(x_1) - f_{i}(x_2) - \inprod{\subgrad{f_i}(x_2)}{x_1-x_2} 
\le \tfrac{L_i}{2}\gnorm{x_1-x_2}{}{2} + 
H_{i} \gnorm{x_1-x_2}{}{},
\end{equation}
for all $ x_1,x_2 \in X$ and for all $f'_i(x_2) \in \pgrad f_i(x_2), \ i = 1,\dots, m$, and
\begin{equation} \label{eq:A1} 
\begin{aligned}
f_{i}(x_1) - f_{i}(x_2) 
&\le 
M_{f,i} \gnorm{x_1-x_2}{}{}, &\forall x_1,x_2 \in X, i =  1, \dots, m,\\
\chi_i(x_1) - \chi_i(x_2) 
&\le 
M_{\chi,i} \gnorm{x_1-x_2}{}{}, &\forall x_1,x_2 \in X, i =1, \dots, m.
\end{aligned}
\end{equation}
Note that the Lipschitz-continuity assumption in \eqref{eq:A1} is common in the literature when $f_i, i \in [m]$, are nonsmooth functions. If $f_i, i \in [m]$, are Lipschitz smooth then 
their gradients are bounded due to the compactness of $X$. Hence \eqref{eq:A1} is not a strong assumption for the given setting. Also note that due to definition of subgradient for convex function defined in Section \ref{sec-notn}, we have $\gnorm{f'_i(\cdot)}{\ast}{} \le M_{f,i}$ which implies $\abs{{f'_i(x_2)^T}\paran{x_1-x_2}} \le \gnorm{f_i'(x_2)}{\ast}{} \gnorm{x_1-x_2}{}{} \le M_{f,i} \gnorm{x_1-x_2}{}{}$.
Using this relation, \eqref{eq:A2} and \eqref{eq:A1}, we have the following four relations:
\begin{equation}\label{eq:lipschitz_relations}
\begin{split}
\gnorm{f(x_1)-f(x_2)}{2}{} &\le M_f\gnorm{x_1-x_2}{}{},\\
\gnorm{\chi(x_1)-\chi(x_2)}{2}{} &\le M_\chi\gnorm{x_1-x_2}{}{},\\
\gnorm{f(x_1)-f(x_2) - \subgrad{f}(x_2)^T(x_1-x_2)}{2}{} &\le \tfrac{L_f}{2}\gnorm{x_1-x_2}{}{2} + H_f\gnorm{x_1-x_2}{}{},\\
\gnorm{f'(x_2)^T(x_1-x_2)}{2}{} &\le M_{f} \gnorm{x_1-x_2}{}{},
\end{split}
\end{equation}
for all $x_1, x_2 \in X$.
Here $\subgrad{f}(\cdot) := [\subgrad{f}_1(\cdot), \dots, \subgrad{f}_m(\cdot)] \in \Rbb^{n\times m}$ and constants $M_f, M_\chi, H_f$ and $L_f$ are defined as 
\begin{equation}\label{eq:define_M_L}
\begin{tabular}{DD}
M_f & (\tsum_{i =1}^{m}M_{f,i}^2)^{1/2}, &M_\chi & (\tsum_{i = 1}^mM_{\chi,i}^2)^{1/2},\\[2mm]
H_f & (\tsum_{i =1}^mH_{i}^2)^{1/2}, &L_f & (\tsum_{i =1}^mL_{i}^2)^{1/2}.
\end{tabular}
\end{equation}
We denote $\alpha = (\alpha_1, \dots, \alpha_m)^T$ as the vector of moduli of strong convexity for $\chi_i, i \in[m]$, and $\alpha_{0}$ as the modulus of strong convexity for $\chi_0$. We say that convex problem \eqref{main-prob} is a smooth composite if \eqref{eq:A2} is satisfied with $H_i = 0$ for all $i = 1, \dots, m$ and \eqref{eq:A3} is satisfied with $H_0 =0$. Otherwise, \eqref{main-prob} is a nonsmooth problem. To be succinct, problem \eqref{main-prob} is composite smooth if $H_f = H_ 0 = 0$, otherwise it is a nonsmooth problem.

We assume that we can access the first-order information of functions $f_0, f_i$ and zeroth-order information of function $f_i$ using a stochastic oracle (SO). In particular, given $x \in X$, 
SO outputs $G_{0}(x, \xi), G_i(x, \xi)$, and $F(x, \xi)$ such that
\begin{equation}\label{eq:stochastic_oracle}
\begin{split}
\Ebb \bracket*{G_{0}(x,\xi)} &= \subgrad{ f_0}(x),\\
\Ebb \bracket*{G_i(x, \xi)} &= \subgrad{ f_i}(x), \tab i =1, \dots, m,\\
\Ebb[F(x, \xi)] &= f(x),\\
\Ebb \bracket*{\gnorm{G_0(x,\xi)-\subgrad{ f_0}(x) }{\ast}{2} } &\le \sigma_0^2,\\
\Ebb \bracket*{\gnorm{ G_i(x, \xi) - \subgrad{ f_i}(x)}{\ast}{2}} &\le \sigma_i^2, \tab i =1, \dots, m,\\
\Ebb[\gnorm{F(x, \xi)-f(x)}{2}{2}] &\le \sigma_f^2,
\end{split}
\end{equation} 
where $\xi$ is a random variable which models the source of uncertainty and is independent of the search point $x$. Note that
the last relation of \eqref{eq:stochastic_oracle} is satisfied if we have individual stochastic oracles $F_i(x, \xi)$ 
such that $\Ebb[\paran{F_i(x, \xi)-f_{i}(x)}^2] \le \sigma_{f, i}^2$. In particular, we can set $\sigma_f^2 = \tsum_{i= 1}^{m} \sigma_{f, i}^2$.
We call $G_i,  i = 0, \dots, m$, as stochastic subgradients of functions $f_i, i = 0, \dots, m$ at point $x$, respectively. We use stochastic subgradients $G_i(x_t, \xi_t),\ i = 0, \dots, m$, in 
the $t$-th iteration of the ConEx method where $\xi_t$ is a realization of random variable $\xi$ which is independent of the search point $x_t$. {We denote by $\sigma := [\sigma_1, \dots, \sigma_m]^T$ a vector of standard deviation of stochastic subgradients of  individual constraints.}

We denote $\ell^{t-1}_f(x_t)$ a linear approximation of $f(\cdot)$ at point $x_{t}$ with
\[\ell^{t-1}_f(x_t) := f(x_{t-1}) + \subgrad{ f}(x_{t-1})^T(x_t-x_{t-1}),\] 
where $\subgrad{ f}(x_{t-1}) = [\subgrad{ f}_{1}(x_{t-1}), \dots, \subgrad{ f}_{m}(x_{t-1})]$ as defined earlier. For ease of notation, we denote $\ell^{t-1}_f(x_t)$ as $ \ell_f(x_t)$. We can do this, since for all $t$, we approximate $f(x_t)$ with linear function approximation taken at $x_{t-1}$.
We use a stochastic version of $\ell_f$ in our algorithm, which is denoted as $\ell_F$. In particular, we have 
\[\ell_F(x_t) := F(x_{t-1}, \wb{\xi}_{t-1})+ \Gbf(x_{t-1}, \wb{\xi}_{t-1})^T(x_t-x_{t-1}),\]
where 
$\Gbf(x_{t-1}, \wb{\xi}_{t-1}) := [G_{1}(x_{t-1}, \wb{\xi}_{t-1}), \dots, {G}_{m}(x_{t-1}, \wb{\xi}_{t-1})] \in \Rbb^{n \times m}$. Here, we used $\wb{\xi}_t$ as an independent (of $\xi_t$) realization of random variable $\xi$. In other words, $G_i(x_t, \wb{\xi}_t)$ and $G_i(x_t, \xi_t)$ are conditionally independent estimates of $\subgrad{ f}_{i}(x_t)$ for $i = 1, \dots, m$ under the condition that $x_t$ is fixed. As we show later, independent samples of $\xi$ are required to show that $\ell_{F}(x_t)$ is an unbiased estimator of $\ell_f(x_t)$. 

We are now ready to formally describe the constraint extrapolation method (see Algorithm~\ref{alg-without-guess}).
\begin{algorithm}[h]
	\caption{\textbf{Con}straint \textbf{Ex}trapolation (ConEx) Method}
	\label{alg-without-guess}
	\begin{algorithmic}[1]
		\REQUIRE$(x_0, y_0), \{\gamma_t,\tau_t, \eta_t, \theta_t\}_{t\ge 0}, T.$
		\STATE $x_{-1} \gets x_0$, {$\ell_F(x_{0})\gets F(x_0, \wb{\xi}_0)$ and $\ell_{F}(x_{-1}) \gets \ell_{F}(x_{0})$.}
		\FOR {$ t = 0, \dots, T-1$}
		\STATE $s_t \gets (1 + \theta_t)\bracket{\chi(x_t) + \ell_F(x_t) }  - \theta_t \bracket{\chi(x_{t-1}) +\ell_F(x_{t-1}) }.$
		\STATE {$y_{t+1} \gets \argmin_{y \ge \zero} \inprod{-s_t}{y} + \tfrac{\tau_t}{2}\gnorm{y-y_t}{2}{2}$.}
		\STATE $x_{t+1} \gets \prox\paran[\big]{y_{t+1}, G_0(x_t, \xi_t)+\tsum_{i \in [m]}G_i(x_t, \xi_t) y_{t+1}^\br{i}, x_t, \eta_t}.$
		\ENDFOR
		\RETURN $\wb{x}_T = \paran[\big]{\tsum_{t = 0}^{T-1}\gamma_t}^{-1}\sum\limits_{t= 0}^{T-1} \gamma_tx_{t+1}.$
	\end{algorithmic}
\end{algorithm}
As mentioned earlier, the $\ell_F(x_t)$ term in Line 3 of Algorithm \ref{alg-without-guess} is an unbiased estimator of $\ell_f(x_t)$. Moreover, the term $\chi(x_t)+\ell_f(x_t)$ is an approximation to $\chi(x_t)+f(x_t) = \psi(x_t)$. Essentially, Line 3 represents a stochastic approximation for the term $\psi(x_t) + \theta_t(\psi(x_t)-\psi(x_{t-1}))$ which is an extrapolation of the constraints, hence justifying the name of the algorithm. Line 4 is the standard $\prox$ operator {which has a closed-form solution: $y_{t+1} = \bracket[\big]{y_t +\tfrac{1}{\tau_t}s_t}_+$}. 
Line 5 also uses a prox operator defined in \eqref{eq:W-prox}. 
The final output of the algorithm in Line 7 is the weighted average of all primal iterates generated. 
If we choose standard deviations $\sigma_f = \sigma_0 = \sigma_i = 0$ for $i = 1, \dots, m$ then we recover the deterministic gradients and function evaluation. Henceforth, we assume 
general non-negative values for all such standard deviations and provide a combined analysis for these settings. Later, we substitute appropriate values of standard deviations to finish the analysis 
for the following three different cases.
\begin{enumerate}
	\item [a)] Deterministic setting where both the objective and constraints are deterministic. Here $\sigma_0 = \sigma_i = \sigma_f = 0$ for all $i \in [m]$.
	\item [b)] Semi-stochastic setting where the constraints are deterministic but the objective is stochastic. Here, $\sigma_f = \sigma_i= 0$ for all $i \in [m]$. However, $\sigma_0 \ge 0$ can take arbitrary values.
	\item [c)] Fully-stochastic setting where both function and gradient evaluations are stochastic. Here, all $\sigma_f, \sigma_0, \sigma_i \ge 0$ can take arbitrary values.
\end{enumerate}

Below, we specify a stepsize policy and state
the convergence properties of Algorithm \ref{alg-without-guess} for solving problem \eqref{main-prob} in the strongly convex setting where $\chi_0$ is $\alpha_{0}$-strongly convex. {Note that in the stochastic case, stepsize $\tau_t$ depends on the total number of iterations $T$ which needs to be fixed beforehand. Similar policies are used in the stochastic approximation literature \cite{Lan19}.}
The proof of this theorem is involved and will be deferred to Section~\ref{subsec:conv_analysis}.
\begin{theorem}\label{cor:step_size_strong_cvx}
	Suppose \eqref{eq:A3}, \eqref{eq:A2}, \eqref{eq:A1} and \eqref{eq:stochastic_oracle} are satisfied. Let $B\ge 1$ be a constant, $t_0 := \tfrac{4(L_0 + BL_f)}{\alpha_0}+2$, 
	$\MM:= \max\{2M_f, M_\chi+M_f\}$, and $\sigma_{X, f} := (\sigma_{f}^2 + D_X^2\gnorm{\sigma}{2}{2})^{1/2}$.
	Set $y_0 = \zero$, $T \ge 1$  and $\braces*{\gamma_t, \theta_t, \eta_t, \tau_t}$  in Algorithm~\ref{alg-without-guess} according to the following:
	\begin{equation}\label{eq:step_size}
	\begin{tabular}{EE}
	\gamma_t & t +t_0 + 2, &\eta_t &\tfrac{\alpha_{0}\paran*{t+t_0+1}}{2}, \\
	\tau_t & \tfrac{1}{t + 1}\max\braces{\tfrac{32\MM^2}{\alpha_{0}}, \tfrac{384\gnorm{\sigma}{2}{2}T}{\alpha_0}, \tfrac{\sigma_{X,f} T^{3/2}}{B(t_0+2)^{1/2}}},  &\theta_t &\tfrac{t+t_0+1}{t+t_0+2}. 
	\end{tabular}
	\end{equation}
	Then, we have 
	\begin{equation}\label{eq:bounding_optimality_cor}
	\begin{split}
	\Ebb [\psi_0(\wb{x}_T) -\psi_0(x^*)]&\le \tfrac{\alpha_0(t_0+1)(t_0+2)D_X^2}{T^2} + \tfrac{12B\sigma_{X,f}(t_0+1)(t_0+2)^{1/2}}{T^{3/2}} + \tfrac{16(\zeta^2+H_0^2)}{\alpha_0T} + \tfrac{8B(t_0+2)^{1/2}\sigma_{X,f}}{T^{1/2}}.
	\end{split}
	\end{equation} 
	and
	\begin{align}
	\Ebb \gnorm*{\relu{\psi(\wb{x}_T)}}{2}{} &\le  \tfrac{192(t_0+2)(\gnorm{y^*}{2}{}+1)^2\MM^2}{\alpha_0T^2}+ \tfrac{\alpha_{0}(t_0+1)(t_0+2)D_X^2}{T^2} +  \tfrac{13B\sigma_{X,f}(t_0+1)(t_0+2)^{1/2}}{T^{3/2}}  \nonumber\\
	&\quad +\tfrac{16\paran{\zeta^2+ \HH_*^2+144(t_0+2)(\gnorm{y^*}{2}{}+1)^2\gnorm{\sigma}{2}{2} } }{\alpha_{0}T} \nonumber\\
	&\quad+\braces[\big]{\tfrac{6(t_0+2)^{1/2}(\gnorm{y^*}{2}{}+1)^2\sigma_{X,f} }{B}  +\tfrac{26B(t_0+2)^{1/2}\sigma_{X, f}}{3}}\tfrac{1}{T^{1/2}}, \label{eq:bounding_infeasibility_cor}
	\end{align}
	where 
	\begin{align*}
	\HH_* &:= H_0+(\gnorm{y^*}{2}{}+1)H_f + \tfrac{L_fD_X[\gnorm{y^*}{2}{}+1-B]_+}{2},\\
	\zeta &:= 
	2e\braces[\Big]{\big[ \sigma_0^2 + 12(t_0+3)\gnorm{\sigma}{2}{2} \gnorm{y^*}{2}{2} + 96(t_0+2)B^2\gnorm{\sigma}{2}{2} + \tfrac{\HH_*^2}{2} + \tfrac{3\alpha_0B\sigma_{X, f}(t_0+2)^{3/2}}{2} \big]}^{1/2}.
	\end{align*}
	Moreover, we obtain the last iterate convergence
	\begin{align}
	\Ebb[W( x^*, X_T)] &\le \tfrac{192(t_0+2)(\gnorm{y^*}{2}{}+1)^2\MM^2}{\alpha_0^2T^2}+ \tfrac{(t_0+1)(t_0+2)D_X^2}{T^2} +  \tfrac{12B\sigma_{X,f}(t_0+1)(t_0+2)^{1/2}}{\alpha_0T^{3/2}} \nonumber \\
	&\quad +\tfrac{16\paran{\zeta^2+ \HH_*^2+144(t_0+2)(\gnorm{y^*}{2}{}+1)^2\gnorm{\sigma}{2}{2} }}{\alpha_{0}^2T} \nonumber\\
	&\quad +\tfrac{(t_0+2)^{1/2}\gnorm{y^*}{2}{2}\sigma_{X,f}}{B\alpha_0} \tfrac{1}{T^{1/2}} +\tfrac{8B(t_0+2)^{1/2}\sigma_{X, f}}{\alpha_0T^{1/2}}. \label{eq:bounding_last_iterate_cor}
	\end{align}
\end{theorem}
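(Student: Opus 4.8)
\medskip\noindent The plan is to first derive a single stepsize-agnostic ``master inequality'' for an arbitrary run of Algorithm~\ref{alg-without-guess}, bounding the weighted Lagrangian gap $\sum_{t=0}^{T-1}\gamma_t\bigl(\psi_0(x_{t+1})+\inprod{y}{\psi(x_{t+1})}-\psi_0(x)-\inprod{y_{t+1}}{\psi(x)}\bigr)$, valid simultaneously for all $x\in X$ and $y\ge\zero$, in terms of the initial distances $W(x^*,x_0)$ and $\gnorm{y^*-y_0}{2}{2}$, the constants $L_0,L_f,H_0,H_f,M_f,M_\chi$, the noise levels $\sigma_0,\sigma_f,\sigma$, and the sequences $\{\gamma_t,\tau_t,\eta_t,\theta_t\}$; and then to specialize that bound to the policy \eqref{eq:step_size} and simplify. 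The conceptual point making this work over the \emph{unbounded} cone $\{y\ge\zero\}$ is that Line~3 feeds the dual update with the \emph{linearized} operator $\chi(x_t)+\ell_F(x_t)$, which is affine in $x$; this is precisely what removes the need for any projection of $y$ onto a ball.

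For the master inequality I would proceed in the standard primal--dual fashion. Apply the three-point prox inequality to the dual step (Line~4, an exact Euclidean prox): $\tfrac1{\tau_t}\inprod{-s_t}{y-y_{t+1}}\le\tfrac12\gnorm{y-y_t}{2}{2}-\tfrac12\gnorm{y-y_{t+1}}{2}{2}-\tfrac12\gnorm{y_{t+1}-y_t}{2}{2}$, and the analogous $W$-prox inequality to the primal step (Line~5), where $\alpha_0$-strong convexity of $\chi_0$ contributes an extra $\tfrac{\alpha_0}{2}W(x^*,x_{t+1})$ contraction. Then convert linearized quantities back to true ones: convexity of $f_i$ together with \eqref{eq:A2} gives $\inprod{y_{t+1}}{\chi(x_{t+1})+\ell_f(x_{t+1})-\psi(x)}\le\inprod{y_{t+1}}{\psi(x_{t+1})-\psi(x)}+\gnorm{y_{t+1}}{2}{}\bigl(\tfrac{L_f}{2}\gnorm{x_{t+1}-x_t}{}{2}+H_f\gnorm{x_{t+1}-x_t}{}{}\bigr)$, and likewise for $f_0$ via \eqref{eq:A3}; the quadratic residuals are absorbed into the $\tfrac{\eta_t}{2}W$ term once $\eta_t\gtrsim L_0+\gnorm{y_{t+1}}{2}{}L_f$, which is exactly where $t_0\ge\tfrac{4(L_0+BL_f)}{\alpha_0}+2$ and the parameter $B$ enter --- if $\gnorm{y^*}{2}{}+1>B$ the leftover curvature $\tfrac{L_fD_X[\gnorm{y^*}{2}{}+1-B]_+}{2}$ is demoted to a nonsmooth $H$-type term, explaining its presence in $\HH_*$. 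The extrapolation $s_t=(1+\theta_t)\delta_t-\theta_t\delta_{t-1}$ with $\delta_t:=\chi(x_t)+\ell_F(x_t)$ is handled by the usual momentum telescoping under $\theta_t=\gamma_{t-1}/\gamma_t$, $\gamma_t\tau_t$ nonincreasing, and a cross-term cancellation condition tying $\eta_t,\tau_t,\theta_t$ and $\MM^2$, at the cost of a single boundary term controlled by $\gnorm{y_T-y}{2}{}\cdot\MM\gnorm{x_{T-1}-x_{T-2}}{}{}$ against a contraction. Multiplying by $\gamma_t$, summing, and using $\eta_t\gamma_t\le\eta_{t-1}\gamma_{t-1}+(\text{strong-convexity credit})$ together with $\tau_t\gamma_t\le\tau_{t-1}\gamma_{t-1}$ yields the deterministic part of the master bound.

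The stochastic layer requires: (i) unbiasedness $\Ebb[\ell_F(x_t)\mid x_t,x_{t-1}]=\ell_f(x_{t})$, from conditional independence of $\wb\xi_{t-1}$ from $\xi_{t-1}$ and from the history; (ii) splitting every inner product involving $G_0(x_t,\xi_t)-\subgrad{f_0}(x_t)$, $G_i(x_t,\xi_t)-\subgrad{f_i}(x_t)$, or $F(x_{t-1},\wb\xi_{t-1})-f(x_{t-1})$ into a zero-mean part summed as a martingale and a residual bounded in expectation by $\sigma_0^2$, $D_X^2\gnorm{\sigma}{2}{2}$, $\sigma_f^2$ after Young's inequality against the $\eta_t$- and $\tau_t$-contractions; the accumulated variance produces the $\tfrac{\zeta^2+H_0^2}{\alpha_0T}$ term and the $\sigma_{X,f}T^{-1/2},\sigma_{X,f}T^{-3/2}$ terms. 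The one genuinely delicate step, and the main obstacle, is an a priori uniform bound $\Ebb\gnorm{y_t-y^*}{2}{2}=O\bigl((\gnorm{y^*}{2}{}+1)^2+\dots\bigr)$ --- on an unbounded domain this does not come for free: one must show the dual recursion $y_{t+1}=\relu{y_t+\tfrac1{\tau_t}s_t}$ is sufficiently contractive toward $y^*$ by taking $\tau_t\gtrsim\MM^2/\alpha_0$ (first branch of the $\max$ in \eqref{eq:step_size}) and $\tau_t\gtrsim\gnorm{\sigma}{2}{2}T/\alpha_0$ in the stochastic case (second branch), then close a Gr\"onwall-type induction over $t$; the amplification of that induction produces the harmless absolute constant $2e$ in $\zeta$ and the $(\gnorm{y^*}{2}{}+1)^2$ factors throughout \eqref{eq:bounding_infeasibility_cor}--\eqref{eq:bounding_last_iterate_cor}.

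Finally I would read off the three estimates. For \eqref{eq:bounding_optimality_cor} set $x=x^*$, $y=\zero$ in the master bound (so the $\inprod{y_{t+1}}{\psi(x^*)}$ terms are nonpositive and drop), apply Jensen over the $\gamma$-weighted average defining $\wb x_T$, divide by $\sum_{t=0}^{T-1}\gamma_t=\Theta(T^2)$, and plug in $\gamma_t=t+t_0+2$, $\eta_t=\tfrac{\alpha_0(t+t_0+1)}{2}$, $\theta_t=\tfrac{t+t_0+1}{t+t_0+2}$ and $\tau_t$ from \eqref{eq:step_size}; its branches generate exactly the displayed terms. For the infeasibility \eqref{eq:bounding_infeasibility_cor} combine the saddle inequality $\psi_0(\wb x_T)+\inprod{y^*}{\psi(\wb x_T)}\ge\LL(x^*,y^*)=\psi_0(x^*)$ with the master bound evaluated at the adversarial multiplier $y=y^*+\relu{\psi(\wb x_T)}$, which turns the Lagrangian gap into $\gnorm{\relu{\psi(\wb x_T)}}{2}{2}$ up to $\gnorm{y^*}{2}{}$-dependent cross terms and the $2\gnorm{y^*}{2}{2}+2\gnorm{\relu{\psi(\wb x_T)}}{2}{2}$ coming from $\gnorm{y-y_0}{2}{2}$; solving the resulting quadratic in $\gnorm{\relu{\psi(\wb x_T)}}{2}{}$ and applying Jensen gives the stated expectation bound. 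For \eqref{eq:bounding_last_iterate_cor} I would not average: instead keep the potential $\Phi_{t+1}:=\eta_tW(x^*,x_{t+1})+\tfrac12\gnorm{y^*-y_{t+1}}{2}{2}$ and use that $\LL(\cdot,y^*)$ is $\bar\alpha$-strongly convex with $\bar\alpha\ge\alpha_0$ to pay for the next iterate's primal potential, so the recursion reads $A_{t+1}\Phi_{t+1}\le A_t\Phi_t+(\text{error}_t)$ with $A_t=\Theta(t^2)$; telescoping, dividing by $A_T$, and then dividing out $\eta_{T-1}=\Theta(\alpha_0T)$ yields the $\tfrac1{\alpha_0^2T^2}$, $\tfrac1{\alpha_0^2T}$ and $T^{-1/2}$ scalings of \eqref{eq:bounding_last_iterate_cor}. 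Everything after the master inequality and the dual a priori bound is lengthy but routine constant-chasing.
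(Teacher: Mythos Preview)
Your plan matches the paper's architecture almost exactly: the paper also (i) proves a stepsize-agnostic master inequality on $\sum_t\gamma_t Q(z_{t+1},z)$ (their Lemma~\ref{thm:linearized_conex_1}), (ii) converts it into separate bounds on optimality, infeasibility, and $W(x^*,x_T)$ (Lemma~\ref{thm:linearized_conex_Main}), (iii) bounds $\Ebb\gnorm{\delta^G_t}{*}{2}$ via exactly the Gr\"onwall recursion you describe, yielding the $e^2$ amplification and the form of $\zeta$ (Lemmas~\ref{prop:recurrence_bound}--\ref{lem:bound_zeta}), and (iv) verifies the stepsize conditions and substitutes. Your identification of the dual a~priori bound as the crux is correct; the paper gets it by evaluating the master inequality at $z^*$, using $Q(z_{t+1},z^*)\ge0$, and observing that $\Ebb\gnorm{\delta^G_t}{*}{2}\le 2\sigma_0^2+4\gnorm{\sigma}{2}{2}\Ebb\gnorm{y_{t+1}}{2}{2}$ feeds back into the same inequality.

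There is one genuine gap in your infeasibility step. You propose to evaluate the master bound at the adversarial $y=y^*+\relu{\psi(\wb x_T)}$; but this $y$ depends on the \emph{entire} trajectory, so the martingale cancellation $\Ebb[\inprod{\delta^F_{t+1}}{y_{t+1}-y}]=0$ fails --- $\delta^F_{t+1}$ is correlated with $y$ through $\wb\xi_t$. Your ``split into zero-mean plus residual'' line does not address this, and the quadratic trick cannot be executed purely in expectation. The paper resolves this with two devices you do not mention: first, it uses the \emph{normalized} test point $\wh y:=(\gnorm{y^*}{2}{}+1)\relu{\psi(\wb x_T)}/\gnorm{\relu{\psi(\wb x_T)}}{2}{}$, so $\wh y\in\BB_+^2(R)$ with $R=\gnorm{y^*}{2}{}+1$ and $\gnorm{\wh y-y_0}{2}{2}$ is bounded by a fixed constant (no quadratic to solve); second, and more importantly, it introduces an auxiliary ``ghost'' sequence $y^v_{t+1}:=\argmin_{y\in\BB_+^2(R)}\tfrac{1}{\tau_t}\inprod{\delta^F_{t+1}}{y}+\tfrac12\gnorm{y-y^v_t}{2}{2}$ (Lemma~\ref{lem:tech_res1} in the paper), then splits $\inprod{\delta^F_{t+1}}{y_{t+1}-\wh y}=\inprod{\delta^F_{t+1}}{y_{t+1}-y^v_{t+1}}+\inprod{\delta^F_{t+1}}{y^v_{t+1}-\wh y}$. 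The first piece has mean zero since $y^v_{t+1}$ is adapted; the second is controlled pathwise by the prox inequality, contributing the extra $\sum_t\tfrac{\gamma_t}{\tau_t}\sigma_{X,f}^2$ visible in \eqref{eq:bounding_infeasibility}. Without this device (or an equivalent uniform-over-$\BB_+^2(R)$ argument), your infeasibility bound does not go through in the fully-stochastic case.
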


An immediate corollary of the above theorem is the following:
\begin{corollary}\label{cor:iteration_counts}
	We obtain an $(\eps, \eps)$-optimal solution of problem \eqref{main-prob} in $T_\eps$ iterations, where
	\begin{equation}\label{eq:bounding_T_cor}
	\begin{split}
	 T_\eps = &\max \Big\{
	\paran[\Big]{\tfrac{5\alpha_0(t_0+2)(t_0+1)D_X^2}{\eps} +\tfrac{960(t_0+2)(\gnorm{y^*}{2}{}+1)^2\MM^2}{\alpha_0\eps} }^{1/2}, \paran[\big]{\tfrac{65B\sigma_{X, f}(t_0+2)^{3/2}}{\eps}}^{2/3},\\
	&\tab \tfrac{80(\zeta^2+ \HH_*^2 +144(t_0+2)(\gnorm{y^*}{2}{}+1)^2\gnorm{\sigma}{2}{2} )}{\alpha_{0}\eps}, \paran[\big]{\tfrac{30(\gnorm{y^*}{2}{}+1)^2\sigma_{X,f}}{B}}\tfrac{t_0+2}{\eps^2}, \paran[\big]{\tfrac{130B\sigma_{X, f}}{3}}^2\tfrac{t_0+2}{\eps^2}
	\Big\}.
	\end{split}
	\end{equation}
	Moreover, we obtain $\Ebb[W(x^*, x_T)] \le \eps$ in at most
	\begin{equation}\label{eq:bounding_T_last_iterate}
	\begin{split}
	&\max \Big\{
	\paran[\Big]{\tfrac{5(t_0+2)(t_0+1)D_X^2}{\eps} +\tfrac{960(t_0+2)(\gnorm{y^*}{2}{}+1)^2\MM^2}{\alpha_0^2\eps} }^{1/2}, \paran[\big]{\tfrac{60B\sigma_{X, f}(t_0+2)^{3/2}}{\alpha_0\eps}}^{2/3},\\
	&\tab \tfrac{80(\zeta^2+ \HH_*^2+144(t_0+2)(\gnorm{y^*}{2}{}+1)^2\gnorm{\sigma}{2}{2})}{\alpha_{0}^2\eps}, \paran[\big]{\tfrac{5\gnorm{y^*}{2}{2}\sigma_{X,f}}{B\alpha_0}}^2\tfrac{t_0+2}{\eps^2}, \paran[\big]{\tfrac{40B\sigma_{X, f}}{\alpha_0} }^2\tfrac{t_0+2}{\eps^2} 
	\Big\}
	\end{split}
	\end{equation}
	iterations.
\end{corollary}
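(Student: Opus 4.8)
The plan is to obtain Corollary~\ref{cor:iteration_counts} purely by inverting the three convergence estimates \eqref{eq:bounding_optimality_cor}, \eqref{eq:bounding_infeasibility_cor} and \eqref{eq:bounding_last_iterate_cor} of Theorem~\ref{cor:step_size_strong_cvx}. The right-hand side of each of these is a finite sum of terms of the form $C_j/T^{p_j}$ with $p_j\in\{1/2,1,3/2,2\}$, where every $C_j$ depends only on the problem data ($t_0$, $D_X$, $\MM$, $\sigma_{X,f}$, $\gnorm{\sigma}{2}{}$, $\gnorm{y^*}{2}{}$, $\zeta$, $\HH_*$, $H_0$, $B$, $\alpha_0$). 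If $\eps$ is partitioned among the terms of one of these bounds so that each term stays below its assigned share, then that bound's left-hand side is at most $\eps$; doing this simultaneously for \eqref{eq:bounding_optimality_cor} and \eqref{eq:bounding_infeasibility_cor} gives $\Ebb[\psi_0(\wb{x}_T)-\psi_0(x^*)]\le\eps$ together with $\Ebb\gnorm{\relu{\psi(\wb{x}_T)}}{2}{}\le\eps$, i.e.\ a stochastic $(\eps,\eps)$-optimal solution in the sense of Definition~\ref{define-apprx-opt}.

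Concretely, I would turn each requirement ``$C_j/T^{p_j}$ is below its share of $\eps$'' into an explicit lower bound of the form $T\ge(\mathrm{const}\cdot C_j/\eps)^{1/p_j}$, and take $T_\eps$ to be the maximum of these finitely many lower bounds over all terms of \eqref{eq:bounding_optimality_cor} and \eqref{eq:bounding_infeasibility_cor}. To compress that maximum into the displayed expression \eqref{eq:bounding_T_cor}, I would group the lower bounds by the exponent $p_j$ and keep, within each group, only the largest one --- the comparisons being settled by the crude estimates $(t_0+1)\le(t_0+2)$, $B\ge1$, and $\HH_*\ge H_0$. The two distinct $T^{-2}$ contributions (the $D_X^2$ term, which appears in both bounds, and the $\MM^2$ term of \eqref{eq:bounding_infeasibility_cor}) get merged under a single square root using $\max\{a^{1/2},b^{1/2}\}\le(a+b)^{1/2}$; the $T^{-3/2}$ contribution yields the $(\,\cdot\,)^{2/3}$ term, the $T^{-1}$ contribution the term linear in $1/\eps$, and squaring the $T^{-1/2}$ contributions yields the $\eps^{-2}$ terms. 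Reproducing the exact numerical constants that appear in \eqref{eq:bounding_T_cor} is then just a matter of keeping the partition of $\eps$ consistent.

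The last-iterate bound \eqref{eq:bounding_T_last_iterate} follows from \eqref{eq:bounding_last_iterate_cor} by the very same recipe, carrying along the extra powers of $\alpha_0$ in the denominators there. Since Theorem~\ref{cor:step_size_strong_cvx} assumes $T\ge1$, the conclusion is to be read for those $\eps>0$ for which the stated right-hand side is at least $1$. I do not expect any real obstacle here --- the argument is elementary algebra once the bounds of Theorem~\ref{cor:step_size_strong_cvx} are available. The only delicate point is the constant bookkeeping: partitioning $\eps$ consistently across the terms of each bound, and, whenever a lower bound coming from \eqref{eq:bounding_optimality_cor} and one coming from \eqref{eq:bounding_infeasibility_cor} share an exponent, retaining the larger one --- plus the trivial observation $\max\{a,b\}\le a+b$ for $a,b\ge0$ needed to fold two square roots into the square root of a sum.
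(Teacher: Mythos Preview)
Your proposal is correct and matches the paper's own approach essentially line for line. The paper's proof is three sentences: it partitions $\eps$ into five equal shares, notes that with $T=T_\eps$ each of the (at most five) displayed summands in \eqref{eq:bounding_infeasibility_cor}, \eqref{eq:bounding_optimality_cor}, and \eqref{eq:bounding_last_iterate_cor} is at most $\eps/5$, and sums. Your plan to invert the terms $C_j/T^{p_j}$, group by exponent, merge the two $T^{-2}$ contributions under one square root via $\max\{a,b\}\le a+b$, and use the dominations $H_0\le\HH_*$, $(t_0+1)\le(t_0+2)$, $12\le13$, $8\le 26/3$ to see that the optimality-gap terms are controlled by the infeasibility terms, is exactly the bookkeeping the paper leaves implicit.
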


\begin{proofx}
	Using \eqref{eq:bounding_infeasibility_cor} and \eqref{eq:bounding_T_cor}, we have $\Ebb \gnorm*{\relu{\psi(\wb{x}_T)}}{2}{} \le \tfrac{\eps}{5} +\tfrac{\eps}{5}+\tfrac{\eps}{5}+\tfrac{\eps}{5}+\tfrac{\eps}{5} = \eps.$
	Similarly, using \eqref{eq:bounding_optimality_cor} and \eqref{eq:bounding_T_cor}, it is easy to observe that $\Ebb \bracket*{\psi_0(\wb{x}_T) - \psi_0(x^*)} \le \eps$.
	Using \eqref{eq:bounding_last_iterate_cor} and \eqref{eq:bounding_T_last_iterate}, we have $\Ebb[W(x^*,x_T)] \le \tfrac{\eps}{5} +\tfrac{\eps}{5}+\tfrac{\eps}{5}+\tfrac{\eps}{5}+\tfrac{\eps}{5} = \eps$. Hence we conclude the proof.
\end{proofx}

Theorem~\ref{cor:step_size_strong_cvx} and Corollary~\ref{cor:iteration_counts}
provide unified iteration complexity bounds for solving strongly convex functional constrained optimization problems.
These results will also be used later for solving subproblems arising from the
proximal point method for nonconvex problems in Section~\ref{sec-alg1}.
Below we derive from \eqref{eq:bounding_T_cor} 
the convergence rate of Algorithm \ref{alg-without-guess} for both 
nonsmooth problems, i.e., either $H_f$ or $H_0$ is strictly positive, 
and (composite) smooth problems, i.e., $H_f = 0, H_0 = 0$.

Let us start with nonsmooth problems for which  \eqref{eq:A3} is satisfied with $H_0 > 0$
or \eqref{eq:A2} is satisfied with $H_{i} > 0$ for at least one $i \in [m]$. 
In this case, we have 
\[
\HH_* = (\gnorm{y^*}{2}{}+1) H_f + H_0 +\tfrac{L_f D_X\bracket{\gnorm{y^*}{2}{}+1-B}_+}{2} > 0
\] 
irrespective of the value of $B$. Then, using \eqref{eq:bounding_T_cor}, we obtain the iteration 
complexity of \[O\Big( \tfrac{1}{\sqrt{\eps}}\big( \tfrac{(L_0+BL_f)D_X}{\sqrt{\alpha_0}} + \tfrac{\sqrt{L_0+BL_f}B\MM}{\alpha_0} \big) + \tfrac{\HH_*^2}{\alpha_0\eps}\Big)\] for
the deterministic case. 
		For the semi-stochastic case, the iteration complexity becomes 
		\[O\Big( \tfrac{1}{\sqrt{\eps}}\big( \tfrac{(L_0+BL_f)D_X}{\sqrt{\alpha_0}} + \tfrac{\sqrt{L_0+BL_f}B\MM}{\alpha_0} \big) + \tfrac{(\HH_*^2+\sigma_0^2)}{\alpha_0\eps}\Big).\] 
		Similarly, for the fully-stochastic case, the iteration complexity is given by
		\[O\Big( \tfrac{1}{\sqrt{\eps}}\big( \tfrac{(L_0+BL_f)D_X}{\sqrt{\alpha_0}} + \tfrac{\sqrt{L_0+BL_f}B\MM}{\alpha_0} \big) + \tfrac{(\HH_*^2+\zeta^2)}{\alpha_0\eps} + \tfrac{1}{\eps^2} \big( \tfrac{B^2(L_0+BL_f)(\sigma_0^2+D_X^2\gnorm{\sigma}{2}{2}) }{\alpha_0}  \big)\Big).\] 
		Observe that, due to the built-in acceleration
		scheme of the ConEx method, the Lipschitz constant $L_0$
		will barely impact the convergence since it appears only in the $O(1/\sqrt{\eps})$ term.
		Similarly, the impact of the Lipschitz constant $L_f$ will be minimized for a large enough $B$ (i.e., $B \ge \gnorm{y^*}{2}{}+1$).
		To the best of our knowledge,
		these complexity results with separate impact of Lipschitz constants appear 
		to be new for functional constrained optimization.
		Moreover, the iteration (and sampling) complexity for the fully-stochastic case, i.e.,
		general stochastic constrained problems requiring only bounded second moments on nosies, 
		 has not been obtained before in the literature.

Now let us consider smooth problems for which 
\eqref{eq:A3} and \eqref{eq:A2} are satisfied with $H_0 = 0$ and $H_i = 0$ for all $i = 1,\dots, m$, respectively.
We distinguish two different scenarios depending on whether $B \ge \gnorm{y^*}{2}{}+1$.
First, if $B \ge \gnorm{y^*}{2}{}+1$, then 
$
\HH_* = H_0 + H_f (\gnorm{y^*}{2}{}+1) + L_f D_X\bracket{\gnorm{y^*}{2}{}+1-B}_+/2 = 0
$ 
and the iteration complexity in \eqref{eq:bounding_T_cor} can be simplified as follows.
For the deterministic case, 
the iteration complexity in \eqref{eq:bounding_T_cor} reduces to
		\begin{equation} \label{smooth_strong1}
		O\Big( \tfrac{1}{\sqrt{\eps}}\big( \tfrac{(L_0+BL_f)D_X}{\sqrt{\alpha_0}} + \tfrac{\sqrt{L_0+BL_f}B\MM}{\alpha_0} \big) \Big).
		\end{equation}
Moreover, the complexity bounds for the semi- and fully-stochastic cases are given by 
		\begin{equation} \label{smooth_strong2}
		O\Big( \tfrac{1}{\sqrt{\eps}}\big( \tfrac{(L_0+BL_f)D_X}{\sqrt{\alpha_0}} + \tfrac{\sqrt{L_0+BL_f}B\MM}{\alpha_0} \big) + \tfrac{\sigma_0^2}{\alpha_0\eps}\Big),
		\end{equation}
		\begin{equation} \label{smooth_strong3}
		O\Big( \tfrac{1}{\sqrt{\eps}}\big( \tfrac{(L_0+BL_f)D_X}{\sqrt{\alpha_0}} + \tfrac{\sqrt{L_0+BL_f}B\MM}{\alpha_0} \big) + \tfrac{\zeta^2}{\alpha_0\eps} + \tfrac{1}{\eps^2} \big( \tfrac{B^2(L_0+BL_f) (\sigma_0^2+D_X^2\gnorm{\sigma}{2}{2} )}{\alpha_0}  \big) \Big),
		\end{equation}
		respectively, where $\zeta^2 = O(\sigma_0^2 + B^2(L_0+BL_f)\gnorm{\sigma}{2}{2}/\alpha_{0})$.
		It is worth noting that a similar bound to \eqnok{smooth_strong1} has been obtained
		in \cite{aybat2018primal} with a slightly different termination criterion. 
		On the other hand, the complexity bounds in \eqnok{smooth_strong2}
		and \eqnok{smooth_strong3} for the semi-stochastic and fully-stochastic cases
		seem to be new in the literature.
		
Second, if $B < \gnorm{y^*}{2}{}+1$ for the smooth case, then $\HH_* > 0$ and the ConEx method 
converges at the rate of nonsmooth problems in all these three settings described above.
Hence, the ConEx method still converges albeit at a slower rate without knowing exact bound on $\gnorm{y^*}{2}{}$. 
On the other hand, existing primal-dual methods require correct upper-bound estimation on $\gnorm{y^*}{2}{}$ 
which is used as a bound on $y$ in order to define the projection operator and properly select stepsize.  To obtain a faster convergence rate,
one can possibly perform a line search for the right value of $B$ when specifying {the policy for $\{\gamma_t, \eta_t, \tau_t,\theta_t\}$} in the ConEx method, 
especially for the deterministic and semi-stochastic cases where the constraint violations $\gnorm{\relu{\psi(\cdot)}}{2}{}$ can be measured precisely. 

It is worth mentioning that for the complexity results discussed above, we do not require 
the constraints $\psi_i, \ i =1,\dots, m$, to be strongly convex. 
From \eqref{eq:step_size}, we can see that $\alpha_{0} > 0$ is enough to ensure
		the selection of stepsize policy which yields accelerated convergence rates. In particular, if $\alpha_i = 0$ for all $i \in [m]$ (implying $\psi_i$'s are merely convex functions) then $\eta_t$ in relation \eqref{eq:int_rel154} is required to satisfy the following more stringent relation: $\gamma_{t}\eta_t \le \gamma_{t-1}(\eta_{t-1}+\alpha_{0})$. Note that our stepsize policy already satisfies this relation. Hence Algorithm \ref{alg-without-guess} exhibits accelerated convergence rates even if the constraints are merely convex.

Now we provide another theorem which states the stepsize policy and 
the resulting convergence properties of the ConEx method for solving problem \eqref{main-prob} without any strong convexity assumptions. 
The proof of this result can be found in Section~\ref{subsec:conv_analysis}.

\begin{theorem} \label{thm:convergence_convex}
	Suppose \eqref{eq:A3}, \eqref{eq:A2}, \eqref{eq:A1} and \eqref{eq:stochastic_oracle} are satisfied. Let $B \ge 1$ be a given constant,
		 $\MM$, $\sigma_{X, f}$ and $\HH_*$ be defined as in Theorem \ref{cor:step_size_strong_cvx}.
Set $y_0 =\zero$ and $\{\gamma_t, \theta_t, \eta_t, \tau_t\}$ in Algorithm~\ref{alg-without-guess} according to the following:\\
	\begin{equation}\label{eq:step_size_conv}
	\begin{tabular}{EE}
	\gamma_t & 1, & \eta_t & L_0+BL_f+\eta,\\
	\theta_t & 1, & \tau_t & \tau,
	\end{tabular}
	\end{equation}
	where 
	\begin{align*}
	\eta &:= \tfrac{\sqrt{2T[\HH_*^2+\sigma_0^2 + 48B^2\gnorm{\sigma}{2}{2} ] } }{D_X} {+}\tfrac{6B\max\{ \MM, 4\gnorm{\sigma}{2}{} \} }{D_X} ,\\
	\tau &:= \max\braces[\big]{\tfrac{\sqrt{96T} \sigma_{X, f}}{B} , \tfrac{2D_X\max\{\MM, 4\gnorm{\sigma}{2}{} \}}{B} }.
	\end{align*}
	Then, we have
	\begin{align}
	\Ebb &[\psi_0(\wb{x}_T) -\psi_0(x^*)] \le \tfrac{(L_0+BL_f)D_X^2+\max\{6\MM, 24\gnorm{\sigma}{2}{} \}BD_X}{T} +   \tfrac{\sqrt{2}(\zeta^2+H_0^2)D_X}{\sqrt{T( \HH_*^2 + \sigma_0^2+48B^2\gnorm{\sigma}{2}{2}) } +3B\MM} + \tfrac{\sqrt{3}B\sigma_{X, f}}{\sqrt{2T}}  \label{eq:bounding_opt_convex}
	\end{align}
	and
	\begin{align}
	\Ebb[\gnorm{\relu{\psi(\wb{x}_T)}}{2}{} ] &\le\tfrac{ (L_0+BL_f)D_X^2+\max\{6\MM, 24\gnorm{\sigma}{2}{} \}D_X\big( B+ \tfrac{(\gnorm{y^*}{2}{}+1)^2}{B} \big) }{T} \nonumber \\
	& \quad
	+ \tfrac{1}{\sqrt{T}} \Big( \big(\tfrac{12\sqrt{6}(\gnorm{y^*}{2}{}+1)^2}{B} + \tfrac{13B}{4\sqrt{6}} \big) \sigma_{X, f}
	 \Big)\nonumber\\
	&\quad + +\tfrac{{\sqrt{2}D_X}\sqrt{\HH_*^2+\sigma_0^2+48B^2\gnorm{\sigma}{2}{2} }}{\sqrt{T}} + \tfrac{\sqrt{2}D_X(\zeta^2+\HH_*^2)}{\sqrt{T(\HH_*^2+\sigma_0^2+48B^2\gnorm{\sigma}{2}{2}) } +3B\MM} , \label{eq:bounding_infeas_convex}
	\end{align}
	where
	\begin{equation*}
	\zeta := 2e \paran[\big]{\sigma_0^2+ \gnorm{\sigma}{2}{2}(14\gnorm{y^*}{2}{2} +123B^2)+2\sqrt{3} \gnorm{\sigma}{2}{}(2B\HH_*+B\sigma_0)}^{1/2}.
	\end{equation*}
	As a consequence, the number of iterations performed by Algorithm~\ref{alg-without-guess} to find an $(\eps, \eps)$-optimal solution of problem \eqref{main-prob} can be bounded by
	\begin{equation}\label{eq:T_convex}
	\begin{split}
	\max\Big\{ &\tfrac{3(L_0+BL_f)D_X^2 + \max\{36\MM, 144\gnorm{\sigma}{2}{}\} (\gnorm{y^*}{2}{}+1) D_X}{\eps}, \tfrac{\sigma_{X, f}^2}{\eps^2} \big(\tfrac{36\sqrt{6}(\gnorm{y^*}{2}{}+1)^2}{B} + \tfrac{13\sqrt{3}B}{4\sqrt{2}}\big)^2,  \\
	&\tfrac{18}{\eps^2}  \big(D_X\sqrt{\HH_*^2+\sigma_0^2+48B^2\gnorm{\sigma}{2}{2}} + \tfrac{D_X(\zeta^2+\HH_*^2)}{\sqrt{\HH_*^2+\sigma_0^2+48B^2\gnorm{\sigma}{2}{2}}}\big)^2 \Big\}.
	\end{split}
	\end{equation}
\end{theorem}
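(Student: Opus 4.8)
The plan is to specialize the general single-iteration progress inequality of the ConEx method — which is established in Section~\ref{subsec:conv_analysis} for an arbitrary stepsize sequence $\{\gamma_t,\theta_t,\eta_t,\tau_t\}$ — to the constant choice \eqref{eq:step_size_conv}, and then convert the resulting averaged saddle-point gap into the optimality and infeasibility estimates of Definition~\ref{define-apprx-opt}. First I would record the two prox optimality conditions. The Euclidean projection in Line~4 gives, for every $y\ge\zero$,
\[
\inprod{-s_t}{y_{t+1}-y}\le \tfrac{\tau_t}{2}\gnorm{y-y_t}{2}{2}-\tfrac{\tau_t}{2}\gnorm{y-y_{t+1}}{2}{2}-\tfrac{\tau_t}{2}\gnorm{y_{t+1}-y_t}{2}{2},
\]
and the Bregman prox in Line~5, applied to the convex function $\chi_0(\cdot)+\tsum_{i}y_{t+1}^{(i)}\chi_i(\cdot)+\inprod{v_t}{\cdot}$ with $v_t:=G_0(x_t,\xi_t)+\tsum_{i}G_i(x_t,\xi_t)y_{t+1}^{(i)}$, gives, for every $x\in X$,
\[
\chi_0(x_{t+1})+\tsum_{i}y_{t+1}^{(i)}\chi_i(x_{t+1})+\inprod{v_t}{x_{t+1}-x}\le \chi_0(x)+\tsum_{i}y_{t+1}^{(i)}\chi_i(x)+\eta_t W(x,x_t)-\eta_t W(x,x_{t+1})-\eta_t W(x_{t+1},x_t).
\]
Combining these with convexity of the $f_i$'s, the curvature and Lipschitz bounds \eqref{eq:A3}, \eqref{eq:A2}, \eqref{eq:A1}, the identity $\inprod{\relu{u}}{u}=\gnorm{\relu{u}}{2}{2}$ at $u=\psi(x_{t+1})$, and isolating the conditionally mean-zero stochastic errors $\Delta_t:=v_t-\bigl(f_0'(x_t)+\tsum_{i}y_{t+1}^{(i)}f_i'(x_t)\bigr)$ and $\delta_t:=\ell_F(x_t)-\ell_f(x_t)$ (unbiased by \eqref{eq:stochastic_oracle}), I expect a one-step bound of the schematic form
\[
\gamma_t\bigl[\LL(x_{t+1},y)-\LL(x,y_{t+1})\bigr]\le \gamma_t\eta_t\bigl(W(x,x_t)-W(x,x_{t+1})\bigr)+\tfrac{\gamma_t\tau_t}{2}\bigl(\gnorm{y-y_t}{2}{2}-\gnorm{y-y_{t+1}}{2}{2}\bigr)+E_t(x,y)-R_t,
\]
where $E_t$ collects the extrapolation cross-term $\theta_t\inprod{(\chi+\ell_f)(x_t)-(\chi+\ell_f)(x_{t-1})}{y_{t+1}-y_t}$, the linearization error $\inprod{y_{t+1}}{f(x_{t+1})-\ell_f(x_{t+1})}$, and the stochastic inner products $\inprod{\Delta_t}{x^*-x_{t+1}}$ together with the $\delta_t$-terms, while $R_t$ gathers the genuinely negative quantities $(\eta_t-L_0-B L_f)W(x_{t+1},x_t)$, $\tfrac{\tau_t}{2}\gnorm{y_{t+1}-y_t}{2}{2}$, and the portion held in reserve to absorb $E_{t+1}$.

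Next I would sum over $t=0,\dots,T-1$ with $\gamma_t=1,\theta_t=1$. The Bregman terms telescope to at most $(L_0+B L_f+\eta)W(x,x_0)$ and the squared dual terms to at most $\tfrac{\tau}{2}\gnorm{y-y_0}{2}{2}=\tfrac{\tau}{2}\gnorm{y}{2}{2}$ by $y_0=\zero$, while the extrapolation cross-terms telescope up to a boundary piece that vanishes at $t=0$ owing to the initialization $x_{-1}=x_0$. The crucial algebra is to bound, via \eqref{eq:lipschitz_relations}, $\gnorm{(\chi+\ell_f)(x_t)-(\chi+\ell_f)(x_{t-1})}{2}{}$ by $(M_\chi+M_f)\gnorm{x_t-x_{t-1}}{}{}$ plus second-order and $H_f$-first-order remainders in $\gnorm{x_t-x_{t-1}}{}{}$ and $\gnorm{x_{t-1}-x_{t-2}}{}{}$, and then split the cross-term by Young's inequality into a fraction of $\tfrac{\tau}{2}\gnorm{y_{t+1}-y_t}{2}{2}$, a multiple of $\tfrac1\tau\gnorm{x_t-x_{t-1}}{}{2}$ absorbed by the $\eta W(x_t,x_{t-1})$ left over at step $t-1$, a multiple of $\tfrac1\tau\gnorm{x_{t-1}-x_{t-2}}{}{2}$ absorbed at step $t-2$, and an $H_f$-driven residual. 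The second-order part of $\inprod{y_{t+1}}{f(x_{t+1})-\ell_f(x_{t+1})}$, which is $\le\tfrac{\gnorm{y_{t+1}}{2}{}L_f}{2}\gnorm{x_{t+1}-x_t}{}{2}$, together with $\tfrac{L_0}{2}\gnorm{x_{t+1}-x_t}{}{2}$ from \eqref{eq:A3}, is exactly what the $L_0+B L_f$ in $\eta_t$ is sized to cancel whenever $\gnorm{y_{t+1}}{2}{}\le B$; when $\gnorm{y_{t+1}}{2}{}>B$ the excess $\tfrac{(\gnorm{y_{t+1}}{2}{}-B)L_f}{2}\gnorm{x_{t+1}-x_t}{}{2}\le\tfrac{(\gnorm{y_{t+1}}{2}{}-B)L_f D_X}{2}\gnorm{x_{t+1}-x_t}{}{}$ is treated as extra first-order nonsmoothness, which is precisely why $\HH_*$ carries the term $\tfrac{L_f D_X[\gnorm{y^*}{2}{}+1-B]_+}{2}$. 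The above absorptions of the $\tfrac1\tau\gnorm{x_t-x_{t-1}}{}{2}$-type terms by $\eta W(\cdot,\cdot)$ and of the $\tfrac{\tau}{2}\gnorm{y_{t+1}-y_t}{2}{2}$-fractions force $\eta\tau\gtrsim\MM^2$ (and $\eta\tau\gtrsim B^2\gnorm{\sigma}{2}{2}$ for the noise cross-terms), which is met by the lower bounds $\eta\ge 6B\max\{\MM,4\gnorm{\sigma}{2}{}\}/D_X$ and $\tau\ge 2D_X\max\{\MM,4\gnorm{\sigma}{2}{}\}/B$ embedded in \eqref{eq:step_size_conv}.

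Then I take conditional expectations. Writing $x=x^*$ (fixed), the martingale-difference terms $\inprod{\Delta_t}{x^*-x_t}$ and the correspondingly split $\delta_t$-terms vanish, the residuals $\inprod{\Delta_t}{x_t-x_{t+1}}$ and their dual analogues are sent by Young's inequality into the negative $R_t$-terms at the cost of variance contributions $O\bigl(T(\HH_*^2+\sigma_0^2+B^2\gnorm{\sigma}{2}{2})/\eta+T\sigma_{X,f}^2/\tau\bigr)$, which is $O(\sqrt T)$ by the $\Theta(\sqrt T)$-parts of $\eta,\tau$; here the quantity $\zeta$ is exactly the resulting second-moment bound on the accumulated dual error, which enters because $y_{t+1}$ is itself built from the noisy $\ell_F$ (whence the $\gnorm{y^*}{2}{2}\gnorm{\sigma}{2}{2}$ and $B^2\gnorm{\sigma}{2}{2}$ pieces inside $\zeta$). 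Dividing by $\tsum_t\gamma_t=T$ yields, for every fixed $y\ge\zero$,
\[
\Ebb\Bigl[\tfrac1T\tsum_{t=0}^{T-1}\bigl(\LL(x_{t+1},y)-\LL(x^*,y_{t+1})\bigr)\Bigr]\le \tfrac{(L_0+B L_f+\eta)D_X^2+\tfrac{\tau}{2}\gnorm{y}{2}{2}}{T}+O\!\Bigl(\tfrac{\HH_*^2+\sigma_0^2+B^2\gnorm{\sigma}{2}{2}}{\eta}+\tfrac{B\sigma_{X,f}^2}{\tau}\Bigr).
\]
Taking $y=\zero$, using $\LL(x^*,y_{t+1})\le\psi_0(x^*)$ (feasibility of $x^*$, $y_{t+1}\ge\zero$), and applying Jensen's inequality to the convex $\psi_0$ at $\wb{x}_T=\tfrac1T\tsum_t x_{t+1}$ gives \eqref{eq:bounding_opt_convex} after substituting $\eta,\tau$. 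For the infeasibility I take $y=y^*+\relu{\psi(\wb{x}_T)}/\gnorm{\relu{\psi(\wb{x}_T)}}{2}{}\in\Rbb^m_+$; then $\inprod{y}{\psi(\wb{x}_T)}=\inprod{y^*}{\psi(\wb{x}_T)}+\gnorm{\relu{\psi(\wb{x}_T)}}{2}{}$, Jensen gives $\tfrac1T\tsum_t\LL(x_{t+1},y)\ge\LL(\wb{x}_T,y)=\psi_0(\wb{x}_T)+\inprod{y}{\psi(\wb{x}_T)}$, and $\LL(\wb{x}_T,y^*)\ge\LL(x^*,y^*)=\psi_0(x^*)$ from the saddle property cancels $\psi_0(\wb{x}_T)-\psi_0(x^*)+\inprod{y^*}{\psi(\wb{x}_T)}\ge0$; since $\gnorm{y}{2}{}\le\gnorm{y^*}{2}{}+1$, this isolates $\Ebb\gnorm{\relu{\psi(\wb{x}_T)}}{2}{}$ and yields \eqref{eq:bounding_infeas_convex}. (In the stochastic case, the fact that this $y$ depends on $\wb{x}_T$ forces one to keep the noise terms inside the $R_t$-absorption rather than as $\inprod{\text{noise}}{y}$, which the split above already arranges.) Finally \eqref{eq:T_convex} follows by setting each $O(1/T)$ and $O(1/\sqrt T)$ term equal to $\eps$ and taking the maximum.

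The step I expect to be the main obstacle is the single-iteration inequality together with the telescoping of the extrapolation cross-terms: because the constraint is \emph{linearized}, the discrepancy $(\chi+\ell_f)(x_t)-(\chi+\ell_f)(x_{t-1})$ couples three consecutive primal iterates and $y_{t+1}$ is never projected onto a bounded set, so one must simultaneously (i) dominate the second-order piece $\tfrac{L_f}{2}\gnorm{x_{t-1}-x_{t-2}}{}{2}$ by a prox term available two iterations earlier, (ii) peel off the part of the constraint-linearization error carrying a factor $\gnorm{y_{t+1}}{2}{}>B$ into the nonsmoothness budget $\HH_*$, and (iii) after taking expectation over the noisy $\ell_F$'s, keep the accumulated (multiplier-weighted) dual error under control by $\zeta$. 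It is precisely this bookkeeping — not any single estimate — that dictates the particular lower bounds on $\eta$ and $\tau$ in \eqref{eq:step_size_conv}.
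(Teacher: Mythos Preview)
Your high-level structure is right — specialize Lemma~\ref{thm:linearized_conex_1}/\ref{thm:linearized_conex_Main} to constant stepsizes, verify \eqref{eq:int_rel154}--\eqref{eq:int_rel155}, and read off \eqref{eq:bounding_opt_convex}--\eqref{eq:bounding_infeas_convex}. But two steps are not merely bookkeeping, and your proposal leaves them unresolved.

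\textbf{The recursion behind $\zeta$.} You write that ``$\zeta$ is exactly the resulting second-moment bound on the accumulated dual error''. The difficulty is that $\Ebb\gnorm{\delta^G_t}{*}{2}\le 2\sigma_0^2+4\gnorm{\sigma}{2}{2}\Ebb\gnorm{y_{t+1}}{2}{2}$, while the only handle on $\Ebb\gnorm{y_{t+1}}{2}{2}$ comes from the very gap inequality \eqref{eq:int_rel153} (specialized at $z=z^*$), whose right-hand side already contains $\tsum_{i\le t}\tfrac{1}{\eta}\Ebb\gnorm{\delta^G_i}{*}{2}$. This is a genuine circularity. The paper breaks it by deriving a recursion $a_t\le R_1+R_2\tsum_{i<t}a_i$ with $a_t=\Ebb\gnorm{\delta^G_t}{*}{2}$ (Lemma~\ref{lem:bound_zeta}) and showing via Lemma~\ref{prop:recurrence_bound} that $a_t\le R_1(1+R_2)^t$; the stepsize conditions are tuned so that $R_2\le 2/T$, whence $(1+R_2)^T\le e^2$. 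This Gr\"onwall-type step is what produces the factor $2e$ in the definition of $\zeta$ and the specific constants inside it; your proposal neither sets up the recursion nor explains why it closes.

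\textbf{The random comparison point for infeasibility.} You correctly flag that $\wh{y}=y^*+\relu{\psi(\wb{x}_T)}/\gnorm{\relu{\psi(\wb{x}_T)}}{2}{}$ is random, but ``keep the noise terms inside the $R_t$-absorption'' is not how the paper proceeds and would cost you the correct rate: absorbing $\inprod{\delta^F_{t+1}}{\wh{y}}$ in absolute value yields an $O(T)$ sum, not $O(\sqrt{T})$. The paper instead introduces an auxiliary projected sequence $y^v_{t+1}=\argmin_{y\in\BB^2_+(\gnorm{y^*}{2}{}+1)}\tfrac{1}{\tau}\inprod{\delta^F_{t+1}}{y}+\tfrac{1}{2}\gnorm{y-y^v_t}{2}{2}$ (see \eqref{eq:int_rel115}--\eqref{eq:int_rel117} and Lemma~\ref{lem:tech_res1}), so that $\inprod{\delta^F_{t+1}}{y_{t+1}-\wh{y}}$ splits into a martingale piece $\inprod{\delta^F_{t+1}}{y_{t+1}-y^v_{t+1}}$ (mean zero, since both are past-measurable) plus $\inprod{\delta^F_{t+1}}{y^v_{t+1}-\wh{y}}$, which telescopes to a bounded term plus $\tsum_t\tfrac{1}{2\tau}\gnorm{\delta^F_{t+1}}{2}{2}$. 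This decoupling device is essential and absent from your sketch.

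A smaller slip: the linearization error that produces the $\HH_*$ term is $\inprod{y}{f(x_{t+1})-\ell_f(x_{t+1})}$ with $y$ the \emph{comparison} point (cf.\ \eqref{eq:int_rel144}--\eqref{eq:int_rel163}), not $\inprod{y_{t+1}}{\cdot}$ as you wrote. This matters: since $y$ is ultimately chosen with $\gnorm{y}{2}{}\le\gnorm{y^*}{2}{}+1$, no uniform bound on the iterates $y_{t+1}$ is ever required, which is precisely the point of the method.
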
 

Theorem~\ref{thm:convergence_convex} provides unified iteration complexity bounds for solving convex functional constrained optimization problems.
Below we derive from \eqref{eq:T_convex} 
the convergence rate of Algorithm \ref{alg-without-guess} for solving both 
nonsmooth problems, i.e., either $H_f$ or $H_0$ is strictly positive, 
and (composite) smooth problems, i.e., $H_f = 0, H_0 = 0$.

Let us start with the more general nonsmooth problems. Since $H_i > 0$ for some $i = 0, \ldots, m$,
we have $\HH_* > 0$. Then,
the complexity bound in \eqref{eq:T_convex} 
for the deterministic, semi-stochastic and fully-stochastic cases, respectively,
will reduce to  
\begin{align}
	&O\big( \tfrac{L_0+BD_X(L_fD_X+ \MM)}{\eps} + \tfrac{D_X^2\HH_*^2}{\eps^2}\big),  \nonumber \\
	&O\big( \tfrac{L_0+BD_X(L_fD_X+ \MM)}{\eps} + \tfrac{D_X^2(\HH_*^2+\sigma_0^2)}{\eps^2} \big),\nonumber\\ \intertext{and} 
	&O\big( \tfrac{L_0+BD_X(L_fD_X+ \MM)}{\eps} + \tfrac{B^2(\sigma_f^2+D_X^2\gnorm{\sigma}{2}{2} ) + D_X^2(\sigma_0^2+\HH_*^2)}{\eps^2} \big). \label{cvx_nonsmooth3}
\end{align}
Similarly to the strongly convex case,
the separate impact of the Lipschitz constants ($L_0$ and $L_f$) on
these complexity bounds have not been obtained before. Moreover,
the iteration (and sampling) complexity for
the fully-stochastic case, i.e.,
general stochastic constrained problems requiring only bounded second moments on nosies, 
appears to be new in the literature.

Now let us consider smooth problems for which $H_f = H_0 = 0$.
We distinguish two different scenarios depending on whether $B \ge \gnorm{y^*}{2}{}+1$.
First, if $B \ge \gnorm{y^*}{2}{}+1$, then $\HH_* = 0$ and the complexity bound in \eqref{eq:T_convex} 
for the deterministic, semi-stochastic and fully-stochastic cases, respectively,
will reduce to  
\begin{align}
&O\big( \tfrac{L_0+BD_X(L_fD_X+ \MM)}{\eps} \big), \label{cvx_bnd1} \\
&O\big( \tfrac{L_0+BD_X(L_fD_X+ \MM)}{\eps} + \tfrac{\sigma_0^2D_X^2}{\eps^2} \big), \label{cvx_bnd2}\\
\intertext{and}
&O\big( \tfrac{L_0+BD_X(L_fD_X+ \MM)}{\eps} + \tfrac{B^2(\sigma_f^2+D_X^2\gnorm{\sigma}{2}{2}) + D_X^2\sigma_0^2}{\eps^2} \big), \label{cvx_bnd3}
\end{align}
where last bound is obtained
from \eqref{eq:T_convex}   by noting that $\zeta^2 =O(\sigma_0^2+ 48B^2\gnorm{\sigma}{2}{2})$ and replacing $\sigma_{X, f}^2 = \sigma_{f}^2+D_X^2\gnorm{\sigma}{2}{2}$.
Note that similar bound as in \eqref{cvx_bnd1} has been obtained before
by using more complicated algorithms (e.g., penalty method) or different criterion.
On the other hand the complexity bounds
in \eqref{cvx_bnd2} and \eqref{cvx_bnd3} appear to be new in the literature.
Second,  if $B < \gnorm{y^*}{2}{}+1$, then $\HH_* > 0$ and as a result,
the ConEx method 
still converges but at the rate of nonsmooth problems in all these three settings described above.

It should be noted that,
different from the strongly convex case (c.f. \eqref{eq:step_size}), the stepsize scheme in \eqref{eq:step_size_conv} 
depends on $\HH_*$, implying that we need to estimate whether $B > \gnorm{y*}{2}{} +1$. However,
we can replace $\HH_*$ in the definition of $\eta$ by $\HH_B := H_0 + BH_f$. In this way, 
similar complexity bounds will be obtained for most cases, including
nonsmooth deterministic,  nonsmooth semi-stochastic, nonsmooth fully-stochastic, as well as smooth semi-stochastic and 
smooth fully-stochastic problems. In particular, with this modification the last term 
 in \eqref{eq:T_convex} will change to
\begin{equation*}
		\tfrac{18}{\eps^2}  \Big(D_X\sqrt{\HH_B^2+\sigma_0^2+48B^2\gnorm{\sigma}{2}{2} } + \tfrac{D_X(\zeta^2+\HH_*^2)}{\sqrt{\HH_B^2+\sigma_0^2+48B^2\gnorm{\sigma}{2}{2} }}\Big)^2.
\end{equation*}
The only exception that this modification would not work is for smooth deterministic problems. In this case, since
$\HH_B = 0$ but $\HH_* > 0$, 
the stepsize scheme \eqref{eq:step_size_conv} set according to replacing $\HH_*$ by $\HH_B$ does not yield 
convergence. In particular, the last term in the infeasibility bound \eqref{eq:bounding_infeas_convex} would change to $\HH_*^2/(\sqrt{T}\HH_B+B\MM)$ which is a constant when $\HH_B = 0$. 
One possible solution for this is to artificially set $\HH_B > 0$ in the definition of $\eta$ to be some large positive number and forgo of the faster convergence of $O(1/\eps)$. 
After this change, we would obtain a convergence rate of $O(1/\eps^2)$. 	An alternative approach would be to design
a line search procedure on $\HH_B$ for the right value of $\HH_*$,
since there exists a verifiable condition based on the constraint violation $\gnorm{\relu{\psi(\cdot)}}{2}{}$.

\subsection{Convergence analysis of the ConEx method} \label{subsec:conv_analysis}
In this section, we provide a combined analysis of Theorem \ref{thm:convergence_convex} and Theorem \ref{cor:step_size_strong_cvx}. 
Note that Algorithm~\ref{alg-without-guess} 
is essentially a dual type method. In order to analyze this algorithm, we define a \emph{primal-dual gap function} for the equivalent saddle point problem \eqref{eq:saddle_pb}. In particular, given a pair of feasible solutions $z = (x, y)$ and $\wb{z} = (\wb{x}, \wb{y})$ of \eqref{eq:saddle_pb}, we define the {primal-dual gap function} $Q(z, \wb{z})$ as
\begin{equation}\label{eq:saddle_conv1}Q(z, \wb{z}) := \LL(x, \wb{y}) - \LL(\wb{x}, y). \end{equation}
One can easily see from \eqref{eq:saddle_def} that $Q(z, z^*) \ge 0$ and $Q(z^*, z) \le 0$ for all feasible $z$. 
We use the gap function of the  saddle point formulation \eqref{eq:saddle_pb} to bound the optimality and infeasibility of the convex problem \eqref{main-prob} separately, in terms of Definition  \ref{define-apprx-opt}. 
We first develop an important upper-bound on the gap function in terms of primal, dual variables and randomness. This bound holds for all non-negative $\gamma_{t}, \eta_t$ and $\tau_t$. The precise statement is provided in  Lemma~\ref{thm:linearized_conex_1}.

The following technical result provides a simple form of the Three-point theorem (see, e.g., Lemma 3.5 of \cite{Lan19}) and will be used in the proof of  Lemma \ref{thm:linearized_conex_1}.
\begin{lemma}
	\label{lem:3-point}
	Assume that $g: S\to \Rbb$ 
	 satisfies
	\begin{equation} \label{eq:strong_conv_g}
	g(y) \ge g(x) + \inprod{g'(x)}{y-x} + \mu W(y,x) , \tab \forall x, y \in S
	\end{equation}
	for some $\mu \ge 0$, where $S$ is closed convex set in $\Rbb^n$. If \[ \wb{x} = \argmin_{x \in S} \braces{g(x) + W(x, \wt{x})},\]
	then 
	\[ g(\wb{x}) + W(\wb{x}, \wt{x}) + (\mu+1)W(x,\wb{x}) \le g(x) + W(x, \wt{x}), \tab \forall x \in S. \]
\end{lemma}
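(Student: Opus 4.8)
The plan is to prove the three-point inequality directly from the first-order optimality condition for $\wb{x}$ combined with the strong convexity of $g$ and a standard Bregman-divergence identity. First I would write down what it means for $\wb{x}$ to minimize $g(x) + W(x,\wt{x})$ over $S$: since $\wb x$ is in the relative interior considerations aside, the variational inequality gives that there is a subgradient $g'(\wb x) \in \pgrad g(\wb x)$ such that $\inprod{g'(\wb x) + \grad_1 W(\wb x,\wt x)}{x - \wb x} \ge 0$ for all $x \in S$, where $\grad_1 W(\wb x,\wt x) = \grad\omega(\wb x) - \grad\omega(\wt x)$ is the gradient of $W$ in its first argument.

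Next I would invoke the convexity/strong-convexity hypothesis \eqref{eq:strong_conv_g} at the pair $(\wb x, x)$ — reading it as $g(x) \ge g(\wb x) + \inprod{g'(\wb x)}{x - \wb x} + \mu W(x,\wb x)$ — to lower-bound $g(x)$ by $g(\wb x)$ plus the linear term plus $\mu W(x,\wb x)$. Then the linear term $\inprod{g'(\wb x)}{x - \wb x}$ is bounded below using the optimality condition by $-\inprod{\grad\omega(\wb x) - \grad\omega(\wt x)}{x - \wb x}$. The key algebraic step is the well-known Bregman "law of cosines" / three-point identity:
\[
-\inprod{\grad\omega(\wb x) - \grad\omega(\wt x)}{x - \wb x} = W(x,\wt x) - W(x,\wb x) - W(\wb x,\wt x),
\]
which follows by simply expanding all three $W$ terms via their definition \eqref{eq:primal_prox}. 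Substituting this in yields $g(x) \ge g(\wb x) + W(x,\wt x) - W(x,\wb x) - W(\wb x,\wt x) + \mu W(x,\wb x)$, and rearranging gives exactly
\[
g(\wb x) + W(\wb x,\wt x) + (\mu+1) W(x,\wb x) \le g(x) + W(x,\wt x).
\]

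The main obstacle — more a point of care than a genuine difficulty — is justifying the first-order optimality condition in the form I want when $\wb x$ may lie on the boundary of $X$ (the relative-interior subtlety flagged in the paper's subgradient definition), and making sure the subgradient $g'(\wb x)$ appearing in the optimality condition is the \emph{same} one for which \eqref{eq:strong_conv_g} is applied; this is handled by noting that \eqref{eq:strong_conv_g} is assumed to hold for the particular subgradient, or equivalently by working with the function $g(\cdot) + W(\cdot,\wt x)$, which is $(\mu+1)$-strongly convex with respect to $W$, so that its unique minimizer $\wb x$ automatically satisfies $g(\wb x) + W(\wb x,\wt x) + (\mu+1)W(x,\wb x) \le g(x) + W(x,\wt x)$ by the definition of strong convexity of a function at its minimizer. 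The latter route is cleanest: it avoids subgradient bookkeeping entirely and reduces the lemma to the elementary fact that if $\phi$ is $\rho$-strongly convex w.r.t. $W$ and $\wb x = \argmin_{x\in S}\phi(x)$, then $\phi(x) \ge \phi(\wb x) + \rho W(x,\wb x)$ for all $x\in S$, which is immediate from the strong-convexity inequality and the vanishing of the (sub)gradient term at the minimizer.
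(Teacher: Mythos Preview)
Your proposal is correct and follows essentially the same argument as the paper: the paper also combines the Bregman three-point identity $W(x,\wt{x}) = W(\wb{x},\wt{x}) + \inprod{\grad W(\wb{x},\wt{x})}{x-\wb{x}} + W(x,\wb{x})$, the strong-convexity inequality \eqref{eq:strong_conv_g} at $\wb{x}$, and the first-order optimality condition $\inprod{g'(\wb{x}) + \grad W(\wb{x},\wt{x})}{x-\wb{x}} \ge 0$ to conclude. Your alternative route via the $(\mu+1)$-strong convexity of $g(\cdot)+W(\cdot,\wt{x})$ is a nice repackaging but amounts to the same computation.
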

\begin{proofx}
	It follows from the definition of $W$ that $W(x, \wt{x}) = W(\wb{x}, \wt{x}) + \inprod{\grad W(\wb{x}, \wt{x})}{x-\wb{x}} + W(x, \wb{x})$. Using this relation, \eqref{eq:strong_conv_g} and 
	the optimality condition for $\wb{x}$, we have
	\begin{align*}
	g(x) + W(x, \wt{x}) &= g(x) + [W(\wb{x}, \wt{x}) + \inprod{\grad W(\wb{x}, \wt{x})}{x-\wb{x}} + W(x, \wb{x})] \\
	&\ge g(\wb{x}) + \inprod{g'(\wb{x})}{x-\wb{x}} + \mu W(x,\wb{x}) + [W(\wb{x}, \wt{x}) + \inprod{\grad W(\wb{x}, \wt{x})}{x-\wb{x}} + W(x, \wb{x})] \\
	&\ge g(\wb{x}) + W(\wb{x}, \wt{x}) + (\mu+1)W(x, \wb{x}).
	\end{align*}
	Hence we conclude the proof.
\end{proofx}

\begin{lemma}\label{thm:linearized_conex_1}
	Suppose \eqref{eq:A3}, \eqref{eq:A2}, \eqref{eq:A1} and \eqref{eq:stochastic_oracle} are satisfied. 
	Let $B \ge 0$ be a constant and assume that $\{\gamma_{t}, \eta_t, \tau_t, \theta_t\}$ is a non-negative sequence  satisfying {for all $t \ge 1$, }
	\begin{subequations}\label{eq:int_rel154}
	\begin{align}
	\gamma_t\theta_t &= \gamma_{t-1},\label{eq:int_rel154-a}\\
	\gamma_t\tau_t &\le \gamma_{t-1}\tau_{t-1},\label{eq:int_rel154-b}\\
	\gamma_t\eta_t &\le \gamma_{t-1}(\eta_{t-1}+\alpha_{0,t-1}),\label{eq:int_rel154-c}
	\end{align}
	\end{subequations}
	\begin{subequations}\label{eq:int_rel155-1}
	\begin{align}
	\theta_t(M_f+M_\chi)^2  &\le \tfrac{\tau_t(\eta_{t-1}-L_0-BL_f)}{12},\label{eq:int_rel155-1-a}\\
	(M_f+M_\chi)^2 &\le \tfrac{\tau_t(\eta_t-L_0-BL_f)}{12},\label{eq:int_rel155-1-b}\\
	(2M_f)^2\tfrac{1}{\theta_t} &\le \tfrac{{\tau_t(\eta_{t-1}-L_0-BL_f)}
	}{12}, \label{eq:int_rel155-2-b}
	\end{align}
	\end{subequations}
	{and, for all $ t \ge 2$,}
		\begin{align}
			(2M_f)^2\tfrac{\theta_t}{\theta_{t-1}} \le \tfrac{\tau_t(\eta_{t-2}-L_0-BL_f)}{12}, \label{eq:int_rel155-2}
		\end{align}
	where $\alpha_{0,t} := \alpha_{0}+\alpha^Ty_{t+1}$ and $M_f, M_\chi, L_f$ are constants as defined in \eqref{eq:define_M_L}.
	Then, for all $t \ge 0$ and $z \in \{(x,y): x\in X, y \ge \zero\}$, we have
	\begin{align}
	&{\tsum_{i = 0}^t} \gamma_i Q(z_{i+1}, z) + {\tsum_{i =0}^t}\gamma_i[\inprod{\delta^G_i}{x_i-x} - \inprod{\delta^F_{i+1}}{y_{i+1}-y} ] \nonumber\\
	&\le \gamma_0\eta_0W(x,x_0) -\gamma_t(\eta_t+\alpha_{0, t})W(x, x_{t+1}) + \tfrac{\gamma_0\tau_0}{2}\gnorm{y-y_0}{2}{2} -\tfrac{\gamma_t\tau_t}{12}\gnorm{y-y_{t+1}}{2}{2} \nonumber \\
	&\quad+\tsum_{ i =0}^t \tfrac{2\gamma_i}{\eta_i-L_0-BL_f}\bracket[\big]{\gnorm{\delta_i^G}{*}{2} +(H_0+H_f\gnorm{y}{2}{}+ \tfrac{L_fD_X}{2}\bracket{\gnorm{y}{2}{}-B}_+)^2} \nonumber\\
	&\quad+ \tsum_{ i =1}^t\tfrac{3\gamma_i\theta_i^2}{2\tau_i}\gnorm{q_i-\wb{q}_i}{2}{2} + \tfrac{3\gamma_t}{2\tau_t}\gnorm{q_{t+1}-\wb{q}_{t+1}}{2}{2} \label{eq:int_rel153}.
	\end{align}
	Here $q_t := \ell_F(x_t)-\ell_F(x_{t-1}) + \chi(x_t)-\chi(x_{t-1})$, $\wb{q}_t := \ell_f(x_t)-\ell_f(x_{t-1}) + \chi(x_t)-\chi(x_{t-1})$, $\delta_i^F := \ell_F(x_t) -\ell_f(x_t)$ and $\delta_t^G := G_{0}(x_t, \xi_t) + \tsum_{j =1}^m G_j(x_t,\xi_t)y^\br{j}_{t+1}-\subgrad{ f}_0(x_t) - \tsum_{j = 1}^m\subgrad{ f_j}(x_{t})y_{t+1}^\br{j}$.
\end{lemma}
\newcounter{num}
\setcounter{num}{0} 

\begin{proofx}
	Note that $y_{t+1} = \argmin\limits_{y \ge \zero} \inprod{-s_t}{y} + \tfrac{\tau_t}{2} \gnorm{y-y_t}{2}{2}$. Hence, using Lemma \ref{lem:3-point}, we have for all $y \ge \zero$,
	\begin{equation}\label{eq:int_rel139} 
	-\inprod{s_t}{y_{t+1} -y} \le \tfrac{\tau_t}{2} \bracket*{\gnorm{y-y_t}{2}{2} - \gnorm{y_{t+1}-y_t}{2}{2} - \gnorm{y-y_{t+1} }{2}{2} }.
	\end{equation}
	Let us denote $v_t := \subgrad{ f_0}(x_t) + \tsum_{j = 1}^m\subgrad{ f_j}(x_{t})y_{t+1}^\br{j}$ and $V_t := G_{0}(x_t, \xi_t) + \tsum_{j=1}^m G_{j}(x_t, \xi_t)y^\br{j}_{t+1}$. 
	Then, due to the strong convexity of $\chi_0$ and $\chi_j, j =1, \dots, m$, the optimality of $x_{t+1}$, Lemma \ref{lem:3-point} and the definition of $\alpha_{0,t}$, we have for all $x \in X$,
	\begin{equation} \label{eq:int_rel140} 
	\begin{split}
	\inprod{V_t}{x_{t+1} -x} &+ \chi_0(x_{t+1})-\chi_0(x) + \tsum_{j=1}^m \paran*{\chi_j(x_{t+1}) -\chi_j(x)}y^\br{j}_{t+1}\\
	&\le \eta_t [W(x, x_t) -W(x_{t+1}, x_t)] - (\eta_t + \alpha_{0,t}) W(x, x_{t+1}).
	\end{split}
	\end{equation}
	Due to the convexity of $f_0$ and $f_i$, \eqref{eq:A3}, the definition of $\ell_f$ and the fact that $y_{t+1} \ge \zero$, we have
	\begingroup
	\allowdisplaybreaks
	\begin{align}
	&\inprod{v_t}{x_{t+1} -x} = \inprod{\subgrad{ f_0}(x_t) + \tsum_{i \in [m]}\subgrad{ f_i}(x_{t} ) y^\br{i}_{t+1} } {x_{t+1} -x} \nonumber \\
	&\quad= \inprod*{\subgrad{ f_0}(x_t)}{x_{t+1} -x_t +x_t -x} + \inprod{\subgrad{ f}(x_{t})y_{t+1}} {x_{t+1}-x_t + x_t -x} \nonumber \\
	&\quad\ge f_0(x_t) -f_0(x) + f_0(x_{t+1}) - f_0(x_t) - \tfrac{L_0}{2}\gnorm{x_{t+1}-x_t}{}{2} -H_0\gnorm{x_{t+1}-x_t}{}{} \nonumber \\
	&\quad\quad +\inprod{y_{t+1}}{\ell_f(x_{t+1}) - f(x_t)} + \inprod{y_{t+1}}{f(x_t)-f(x)}\nonumber\\
	&\quad= f_0(x_{t+1}) - f_0(x) + \inprod{\ell_f(x_{t+1}) - f(x)}{y_{t+1}} -\underbracket{\paran[\big]{\tfrac{L_0}{2} \gnorm{x_{t+1}- x_t}{}{2}+H_0\gnorm{x_{t+1}-x_t}{}{}}}_{O_{t+1}} \label{eq:int_rel141},
	\end{align}
	\endgroup
	where $O_{t+1} :=\tfrac{L_0}{2} \gnorm{x_{t+1}- x_t}{}{2}+H_0\gnorm{x_{t+1}-x_t}{}{}$ is a `Lipschitz'-like term for the objective.
	Combining \eqref{eq:int_rel140}, \eqref{eq:int_rel141}, noting that $\delta^G_t =  V_t - v_t$ and using $\psi_0 = f_0+ \chi_0$, $\psi=f + \chi$, we have 
	\begin{equation} \label{eq:int_rel142}
	\begin{split}
	&\psi_0(x_{t+1}) -\psi_0(x)+ \inprod{\ell_f(x_{t+1})+\chi(x_{t+1}) -\psi(x)}{y_{t+1}} + \inprod{\delta^G_t}{x_{t+1}-x} \\
	&\le \eta_t W(x, x_t) - \eta_tW(x_{t+1}, x_t) -(\eta_t+\alpha_{0,t})W(x, x_{t+1}) + O_{t+1}.
	\end{split}
	\end{equation}
	Noting the definition of $Q(\cdot,\cdot)$ in \eqref{eq:saddle_conv1} and, adding \eqref{eq:int_rel139} and \eqref{eq:int_rel142}, we obtain
	\begingroup
	\allowdisplaybreaks
	\begin{align}
	Q(z_{t+1}, z) & -\inprod{\psi(x_{t+1} )} {y} +\inprod{\ell_f(x_{t+1})+\chi(x_{t+1})}{y_{t+1}} - \inprod{s_t} {y_{t+1}- y} + \inprod{\delta^G_t}{x_{t+1}-x}\nonumber \\ 
	&\le \tfrac{\tau_t}{2} \bracket*{\gnorm{y - y_t}{2}{2} - \gnorm{y_{t+1}- y_t }{2}{2} -\gnorm{y- y_{t+1}}{2}{2}} \nonumber \\
	&\qquad+\eta_t W(x, x_t) - \eta_tW(x_{t+1}, x_t) -(\eta_t+\alpha_{0,t})W(x, x_{t+1}) + O_{t+1}
	\label{eq:int_rel143}. 
	\end{align}
	\endgroup
	In view of \eqref{eq:A2},
	\begin{equation*}
	f_i(x_{t+1})-\ell_{f_i}(x_{t+1}) \le \tfrac{L_{i}}{2}\gnorm{x_{t+1}-x_t}{}{2} + H_{i}\gnorm{x_{t+1}-x_t}{}{}.
	\end{equation*}
	Then, using Cauchy-Schwarz inequality and noting definitions of $L_f, H_f$, we have
	\begin{equation*}
	\inprod{y}{f(x_{t+1})-\ell_f(x_{t+1}) } \le \gnorm{y}{2}{} \underbracket{\bracket[\big]{\tfrac{L_f}{2}\gnorm{x_{t+1}-x_t}{}{2} + H_f\gnorm{x_{t+1}-x_t}{}{} }}_{C_{t+1}},
	\end{equation*}
	where $C_{t+1} := \tfrac{L_f}{2}\gnorm{x_{t+1}-x_t}{}{2} + H_f\gnorm{x_{t+1}-x_t}{}{} $ is a `Lipschitz'-like term for the constraints.
	In view of the above relation and definitions of $q_t$ and $\delta_{t+1}^F$, we have
	\begingroup
	\allowdisplaybreaks
	\begin{align}
	&\inprod{\ell_f(x_{t+1})+\chi(x_{t+1})}{y_{t+1}}-\inprod{\psi(x_{t+1} )} {y} - \inprod{s_t} {y_{t+1}- y} \nonumber\\
	&\quad \ge \inprod{\ell_f(x_{t+1})+\chi(x_{t+1})}{y_{t+1}} - \inprod{\ell_f(x_{t+1})+ \chi(x_{t+1})}{y} -\inprod{s_t}{y_{t+1}-y} - \gnorm{y}{2}{}C_{t+1}  \nonumber\\
	&\quad = \inprod{\ell_f(x_{t+1})+\chi(x_{t+1})-s_t}{y_{t+1}-y} - \gnorm{y}{2}{}C_{t+1} \nonumber\\
	&\quad = \inprod*{\ell_f(x_{t+1})+\chi(x_{t+1}) - \ell_F(x_t) - \chi(x_{t}) - \theta_tq_t}{y_{t+1}-y}-\gnorm{y}{2}{}C_{t+1} \nonumber \\
	&\quad = \inprod{q_{t+1}}{y_{t+1}-y} - \theta_t \inprod{q_t}{y_t - y} - \theta_t \inprod{q_t}{y_{t+1} - y_t} - \inprod{ \delta^F_{t+1} }{y_{t+1} - y} -\gnorm{y}{2}{}C_{t+1}\label{eq:int_rel144}.
	\end{align}
	\endgroup
	{Given the non-negative constant $B$ and using the definition of $C_{t+1}$, we have}
	\begin{align}
	\gnorm{y}{2}{}C_{t+1} &= \tfrac{L_f}{2}(\gnorm{y}{2}{}-B)\gnorm{x_{t+1}-x_t}{}{2} + \tfrac{BL_f}{2}\gnorm{x_{t+1}-x_t}{}{2} + \gnorm{y}{2}{}H_f\gnorm{x_{t+1}-x_t}{}{} \nonumber \\
	&\le \tfrac{L_f}{2}\bracket{\gnorm{y}{2}{}-B}_+\gnorm{x_{t+1}-x_t}{}{2} + \tfrac{BL_f}{2}\gnorm{x_{t+1}-x_t}{}{2} + \gnorm{y}{2}{}H_f\gnorm{x_{t+1}-x_t}{}{} \nonumber \\
	&\le \tfrac{BL_f}{2}\gnorm{x_{t+1}-x_t}{}{2} + \paran[\big]{\gnorm{y}{2}{}H_f + \tfrac{L_f D_X}{2}\bracket{\gnorm{y}{2}{}-B}_+}\gnorm{x_{t+1}-x_t}{}{}. \label{eq:int_rel163}
	\end{align}
	{Recall the definition of $D_X$ from \eqref{eq:diameter}.}
	By \eqref{eq:int_rel143}, \eqref{eq:int_rel144}, and \eqref{eq:int_rel163}, noting the definition of $O_{t+1}$, using the relation $\tfrac{1}{2}\gnorm{a-b}{}{2} \le W(a,b)$and replacing index $t$ by $i$, we have
	\begingroup
	\allowdisplaybreaks
	\ifthenelse{\value{num} = 0}{	
		\begin{align}
			&Q(z_{i+1}, z) + \inprod{q_{i+1}}{y_{i+1}-y} - \theta_i \inprod{q_i}{y_i - y} +\inprod{\delta^G_i}{x_i-x} -\inprod{\delta^F_{i+1}}{y_{i+1}-y}\nonumber \\
			&\le \theta_i \inprod{q_i}{y_{i+1} -  y_i} - \inprod{\delta^G_i}{x_{i+1}-x_i} \nonumber \\
			&\quad+\eta_i W(x, x_i) -(\eta_i+\alpha_{0,i})W(x, x_{i+1}) +\tfrac{\tau_i}{2} \bracket*{\gnorm{y - y_i}{2}{2} - \gnorm{y_{i+1}- y_i }{2}{2} -\gnorm{y- y_{i+1}}{2}{2} }  \nonumber \\
			&\quad  - (\eta_i-L_0-BL_f)W(x_{i+1}, x_i)+ \paran[\big]{H_0+\gnorm{y}{2}{}H_f + \tfrac{L_f D_X}{2}\bracket{\gnorm{y}{2}{}-B}_+}\gnorm{x_{i+1}-x_i}{}{}.\label{eq:int_rel145} 
		\end{align}
	}
	{
	\begin{align}
	&Q(z_{t+1}, z) + \inprod{q_{t+1}}{y_{t+1}-y} - \theta_t \inprod{q_t}{y_t - y} +\inprod{\delta^G_t}{x_t-x} -\inprod{\delta^F_{t+1}}{y_{t+1}-y}\nonumber \\
	&\le \theta_t \inprod{q_t}{y_{t+1} -  y_t} - \inprod{\delta^G_t}{x_{t+1}-x_t} \nonumber \\
	&\quad+\eta_t W(x, x_t) -(\eta_t+\alpha_{0,t})W(x, x_{t+1}) +\tfrac{\tau_t}{2} \bracket*{\gnorm{y - y_t}{2}{2} - \gnorm{y_{t+1}- y_t }{2}{2} -\gnorm{y- y_{t+1}}{2}{2} }  \nonumber \\
	&\quad  - (\eta_t-L_0-BL_f)W(x_{t+1}, x_t)+ \paran[\big]{H_0+\gnorm{y}{2}{}H_f + \tfrac{L_f D_X}{2}\bracket{\gnorm{y}{2}{}-B}_+}\gnorm{x_{t+1}-x_t}{}{}.\label{eq:int_rel145} 
	\end{align}
	}
	\endgroup
	Multiplying \eqref{eq:int_rel145} by \ifthenelse{\value{num} = 0}{$\gamma_i$}{$\gamma_t$}, summing them up from \ifthenelse{\value{num} = 0}{$i = 0$ to $t$ with $t \ge 0$}{$t = 0$ to $T-1$ with $T \ge 1$} {and noting that $q_0 = \zero$}, we obtain
	\begingroup
	\allowdisplaybreaks
	\ifthenelse{\value{num} = 0}{
	\begin{align}
		&\tsum_{i = 0}^{t} \gamma_i Q(z_{i+1}, z)  + \tsum_{i = 0}^{t} [\gamma_i \inprod{q_{i+1}}{y_{i+1}-y} - \gamma_i \theta_i \inprod{q_i}{y_i - y}] \nonumber\\
		&+ \tsum_{i =0}^{t}\gamma_i[\inprod{\delta^G_i}{x_i-x} - \inprod{\delta^F_{i+1}}{y_{i+1}-y} ] \nonumber\\
		&\le \tsum_{i = \textcolor{blue}{1}}^{t} [\gamma_i\theta_i \inprod{q_i-\wb{q}_i}{y_{i+1}- y_i} + \gamma_i\theta_i\inprod{\wb{q}_i}{y_{i+1}- y_i}] + \tsum_{ i =0}^t\inprod{\gamma_i\delta^G_i}{x_i-x_{i+1}} \nonumber\\
		&\qquad+ \tsum_{i = 0}^{t} \bracket*{\tfrac{\gamma_i\tau_i}{2} \gnorm{y- y_i}{2}{2} - \tfrac{\gamma_i\tau_i}{2}\gnorm{y- y_{i+1} }{2}{2}} - \tsum_{i = 0}^{t}\tfrac{\gamma_i\tau_i}{2} \gnorm{y_{i+1}- y_i}{2}{2} \nonumber \\
		&\qquad +\tsum_{i = 0}^{t} [\gamma_i\eta_i W(x, x_i) - \gamma_i(\eta_i+\alpha_{0,i})W(x, x_{i+1})] \nonumber \\
		&\qquad -\tsum_{i = 0}^{t}\bracket[\big]{ \gamma_i (\eta_i-L_0 - BL_f)W(x_{i+1}, x_i) - \gamma_i\underbracket{\big(H_0 + \gnorm{y}{2}{}H_f + \tfrac{L_fD_X}{2}\bracket{\gnorm{y}{2}{}-B}_+\big)}_{\HH(y, B)}\gnorm{x_{i+1}-x_i}{}{}}  \label{eq:int_rel146}, 
	\end{align}
	}
	{	\begin{align}
			&\tsum_{t = 0}^{T-1} \gamma_t Q(z_{t+1}, z)  + \tsum_{t = 0}^{T-1} [\gamma_t \inprod{q_{t+1}}{y_{t+1}-y} - \gamma_t \theta_t \inprod{q_t}{y_t - y}] \nonumber\\
			&+ \tsum_{t =0}^{T-1}\gamma_t[\inprod{\delta^G_t}{x_t-x} - \inprod{\delta^F_{t+1}}{y_{t+1}-y} ] \nonumber\\
			&\le \tsum_{t = 0}^{T-1} [\gamma_t\theta_t \inprod{q_t-\wb{q}_t}{y_{t+1}- y_t} + \gamma_t\theta_t\inprod{\wb{q}_t}{y_{t+1}- y_t} + \inprod{\gamma_t\delta^G_t}{x_t-x_{t+1}}] \nonumber\\
			&\qquad+ \tsum_{t = 0}^{T-1} \bracket*{\tfrac{\gamma_t\tau_t}{2} \gnorm{y- y_t}{2}{2} - \tfrac{\gamma_t\tau_t}{2}\gnorm{y- y_{t+1} }{2}{2}} - \tsum_{t = 0}^{T-1}\tfrac{\gamma_t\tau_t}{2} \gnorm{y_{t+1}- y_t}{2}{2} \nonumber \\
			&\qquad +\tsum_{t = 0}^{T-1} [\gamma_t\eta_t W(x, x_t) - \gamma_t(\eta_t+\alpha_{0,t})W(x, x_{t+1})] \nonumber \\
			&\qquad -\tsum_{t = 0}^{T-1}\bracket[\big]{ \gamma_t (\eta_t-L_0 - BL_f)W(x_{t+1}, x_t) - \gamma_t\underbracket{\big(H_0 + \gnorm{y}{2}{}H_f + \tfrac{L_fD_X}{2}\bracket{\gnorm{y}{2}{}-B}_+\big)}_{\HH(y, B)}\gnorm{x_{t+1}-x_t}{}{}}  \label{eq:int_rel146}, 
	\end{align}
	}
	\endgroup
	where $\HH(y, B) := H_0 + \gnorm{y}{2}{}H_f + \tfrac{L_fD_X}{2}\bracket{\gnorm{y}{2}{}-B}_+$.
	Now we focus our attention to handle the inner product terms of \eqref{eq:int_rel146}. Noting the definition of $\wb{q}_t$, we have
	\ifthenelse{\value{num} = 0}
	{	\begin{align}
			\gnorm{\wb{q}_i}{2}{} &= \gnorm{\ell_f(x_i)-\ell_f(x_{i-1})+\chi(x_i)-\chi(x_{i-1}) }{2}{} \nonumber\\
			&\le \gnorm{f(x_{i-1}) + \subgrad{ f}(x_{i-1})^T(x_i-x_{i-1}) -f(x_{i-2}) - \subgrad{ f}(x_{i-2})^T(x_{i-1}-x_{i-2})}{2}{} + \gnorm{\chi(x_i)-\chi(x_{i-1})}{2}{}\nonumber\\
			&\le \gnorm{ f(x_{i-1}) -f(x_{i-2}) - \subgrad{ f}(x_{i-2})^T(x_{i-1}-x_{i-2}) }{2}{} + \gnorm{\subgrad{ f}(x_{i-1})^T(x_i-x_{i-1})}{2}{} + M_\chi\gnorm{x_i-x_{i-1}}{}{}\nonumber\\
			&\le 2M_f \gnorm{x_{i-1}-x_{i-2}}{}{}  + (M_f+{M_\chi})\gnorm{x_i-x_{i-1}}{}{} \label{eq:int_rel147},
	\end{align}
	}
	{	
	\begin{align}
			\gnorm{\wb{q}_t}{2}{} &= \gnorm{\ell_f(x_t)-\ell_f(x_{t-1})+\chi(x_t)-\chi(x_{t-1}) }{2}{} \nonumber\\
			&\le \gnorm{f(x_{t-1}) + \subgrad{ f}(x_{t-1})^T(x_t-x_{t-1}) -f(x_{t-2}) - \subgrad{ f}(x_{t-2})^T(x_{t-1}-x_{t-2})}{2}{} + \gnorm{\chi(x_t)-\chi(x_{t-1})}{2}{}\nonumber\\
			&\le \gnorm{ f(x_{t-1}) -f(x_{t-2}) - \subgrad{ f}(x_{t-2})^T(x_{t-1}-x_{t-2}) }{2}{} + \gnorm{\subgrad{ f}(x_{t-1})^T(x_t-x_{t-1})}{2}{} + M_\chi\gnorm{x_t-x_{t-1}}{}{}\nonumber\\
			&\le 2M_f \gnorm{x_{t-1}-x_{t-2}}{}{}  + (M_f+{M_\chi})\gnorm{x_t-x_{t-1}}{}{} \label{eq:int_rel147},
	\end{align}
	}
	where the last relation follows due to \eqref{eq:lipschitz_relations}.
	Using the above relation, 
	we have for all $i \ge 1$, 
	\begingroup
	\allowdisplaybreaks
\ifthenelse{\value{num} = 0}
	{\begin{align}
		&\gamma_{i}\theta_i\inprod{\wb{q}_i}{y_{i+1}-y_i} -\tfrac{\gamma_{i}\tau_i}{3}\gnorm{y_{i+1}-y_i}{2}{2}-\tfrac{\gamma_{i-2}(\eta_{i-2}-L_0-BL_f)}{4}W(x_{i-1},x_{i-2}) \nonumber\\ &-\tfrac{\gamma_{i-1}(\eta_{i-1}-L_0-BL_f)}{4}W(x_i,x_{i-1})\nonumber\\
		& \le \gamma_{i}\theta_i\gnorm{\wb{q}_i}{2}{}\gnorm{y_{i+1}-y_i}{2}{} -\tfrac{\gamma_{i}\tau_i}{3}\gnorm{y_{i+1}-y_i}{2}{2} \nonumber \\
		&\quad-\tfrac{\gamma_{i-2}(\eta_{i-2}-L_0-BL_f)}{4}W(x_{i-1},x_{i-2}) -\tfrac{\gamma_{i-1}(\eta_{i-1}-L_0-BL_f)}{4}W(x_i,x_{i-1})\nonumber\\
		& \le 2M_f \gamma_{i}\theta_i \gnorm{x_{i-1}-x_{i-2}}{}{}\gnorm{y_{i+1}-y_i}{2}{}-\tfrac{\gamma_{i}\tau_i}{6}\gnorm{y_{i+1}-y_i}{2}{2} -\tfrac{\gamma_{i-2}(\eta_{i-2}-L_0-BL_f)}{4}W(x_{i-1},x_{i-2}) \nonumber\\
		&\quad  +(M_f+M_\chi) \gamma_{i}\theta_i \gnorm{x_i-x_{i-1}}{}{}\gnorm{y_{i+1}-y_i}{2}{} - \tfrac{\gamma_{i}\tau_i}{6}\gnorm{y_{i+1}-y_i}{2}{2} -\tfrac{\gamma_{i-1}(\eta_{i-1}-L_0-BL_f)}{4}W(x_i,x_{i-1})\nonumber\\
		& \le 0, \label{eq:int_rel148}
	\end{align}
	}
	{\begin{align}
		&\gamma_{t}\theta_t\inprod{\wb{q}_t}{y_{t+1}-y_t} -\tfrac{\gamma_{t}\tau_t}{3}\gnorm{y_{t+1}-y_t}{2}{2}-\tfrac{\gamma_{t-2}(\eta_{t-2}-L_0-BL_f)}{4}W(x_{t-1},x_{t-2}) \nonumber\\ &-\tfrac{\gamma_{t-1}(\eta_{t-1}-L_0-BL_f)}{4}W(x_t,x_{t-1})\nonumber\\
		& \le \gamma_{t}\theta_t\gnorm{\wb{q}_t}{2}{}\gnorm{y_{t+1}-y_t}{2}{} -\tfrac{\gamma_{t}\tau_t}{3}\gnorm{y_{t+1}-y_t}{2}{2} \nonumber \\
		&\quad-\tfrac{\gamma_{t-2}(\eta_{t-2}-L_0-BL_f)}{4}W(x_{t-1},x_{t-2}) -\tfrac{\gamma_{t-1}(\eta_{t-1}-L_0-BL_f)}{4}W(x_t,x_{t-1})\nonumber\\
		& \le 2M_f \gamma_{t}\theta_t \gnorm{x_{t-1}-x_{t-2}}{}{}\gnorm{y_{t+1}-y_t}{2}{}-\tfrac{\gamma_{t}\tau_t}{6}\gnorm{y_{t+1}-y_t}{2}{2} -\tfrac{\gamma_{t-2}(\eta_{t-2}-L_0-BL_f)}{4}W(x_{t-1},x_{t-2}) \nonumber\\
		&\quad  +(M_f+M_\chi) \gamma_{t}\theta_t \gnorm{x_t-x_{t-1}}{}{}\gnorm{y_{t+1}-y_t}{2}{} - \tfrac{\gamma_{t}\tau_t}{6}\gnorm{y_{t+1}-y_t}{2}{2} -\tfrac{\gamma_{t-1}(\eta_{t-1}-L_0-BL_f)}{4}W(x_t,x_{t-1})\nonumber\\
		& \le 0, \label{eq:int_rel148}
	\end{align}}
	\endgroup
	where the last inequality follows by applying the relation $W(x,y) \ge \tfrac{1}{2}\gnorm{x-y}{}{2}$, Young's inequality ($2ab \le a^2+ {b^2}$) applied twice, once with 
	\[\ifthenelse{\value{num} = 0}
	{a = \paran[\big]{\tfrac{\gamma_i\tau_i}{6}}^{1/2}\gnorm{y_{i+1}-y_i}{}{}, \tab b = \paran[\big]{ \tfrac{\gamma_{i-2}(\eta_{i-2}-L_0-BL_f)}{8}}^{1/2}\gnorm{x_{i-1}-x_{i-2}}{}{}
	}
	{a = \paran[\big]{\tfrac{\gamma_t\tau_t}{6}}^{1/2}\gnorm{y_{t+1}-y_t}{}{}, \tab b = \paran[\big]{ \tfrac{\gamma_{t-2}(\eta_{t-2}-L_0-BL_f)}{8}}^{1/2}\gnorm{x_{t-1}-x_{t-2}}{}{}},
	\] 
	second time with 
	\[\ifthenelse{\value{num} = 0}{	
		a =\paran[\big]{\tfrac{\gamma_i\tau_i}{6}}^{1/2}\gnorm{y_{i+1}-y_i}{}{},\tab b = \paran[\big]{ \tfrac{\gamma_{i-1}(\eta_{i-1}-L_0-BL_f)}{8}}^{1/2}\gnorm{x_i-x_{i-1}}{}{},}
	{a = \paran[\big]{\tfrac{\gamma_t\tau_t}{6}}^{1/2}\gnorm{y_{t+1}-y_t}{}{},\tab b = \paran[\big]{ \tfrac{\gamma_{t-1}(\eta_{t-1}-L_0-BL_f)}{8}}^{1/2}\gnorm{x_t-x_{t-1}}{}{},}
	\] and the fact that \\[1mm]
	\ifthenelse{\value{num} = 0}{	\begin{tabular}{McM}
			(2M_f)\gamma_{i}\theta_i & \braces[\big]{\tfrac{\gamma_{i}\gamma_{i-2}\tau_i(\eta_{i-2}-L_0- BL_f)}{12}}^{1/2} &$\Leftrightarrow$ &(2M_f)^2\tfrac{\theta_i}{\theta_{i-1}}& \tfrac{\tau_i(\eta_{i-2}-L_0-BL_f)}{12},\\
			(M_f+{M_\chi})\gamma_{i}\theta_i & \braces[\big]{\tfrac{\gamma_{i}\gamma_{i-1}\tau_i(\eta_{i-1}-L_0-BL_f)}{12}}^{1/2} &$\Leftrightarrow$ &(M_f+{M_\chi})^2\theta_i & \tfrac{\tau_i(\eta_{i-1}-L_0-BL_f)}{12},
	\end{tabular}}
{	\begin{tabular}{McM}
			(2M_f)\gamma_{t}\theta_t & \braces[\big]{\tfrac{\gamma_{t}\gamma_{t-2}\tau_t(\eta_{t-2}-L_0- BL_f)}{12}}^{1/2} &$\Leftrightarrow$ &(2M_f)^2\tfrac{\theta_t}{\theta_{t-1}}& \tfrac{\tau_t(\eta_{t-2}-L_0-BL_f)}{12},\\
			(M_f+{M_\chi})\gamma_{t}\theta_t & \braces[\big]{\tfrac{\gamma_{t}\gamma_{t-1}\tau_t(\eta_{t-1}-L_0-BL_f)}{12}}^{1/2} &$\Leftrightarrow$ &(M_f+{M_\chi})^2\theta_t & \tfrac{\tau_t(\eta_{t-1}-L_0-BL_f)}{12},
	\end{tabular}}
	\\[1mm]
	where equivalences follow from \eqref{eq:int_rel154-a} {and conditions follow from relations in \eqref{eq:int_rel155-1-a} and \eqref{eq:int_rel155-2}. In particular,} 
	{we require \eqref{eq:int_rel155-1-a} and \eqref{eq:int_rel155-2} for $t \ge 1$  such that \eqref{eq:int_rel148} is satisfied for $i \ge 1$. Moreover,} \ifthenelse{\value{num} =0}{{$\gnorm{x_{i-1}-x_{i-2}}{}{} = 0$  for $i=1$}}{{$\gnorm{x_{t-1}-x_{t-2}}{}{} = 0$  for $t =1$}}. {Hence, we require \eqref{eq:int_rel155-2} for $t \ge 2$}.\\
	Using Young's inequality, Cauchy-Schwarz inequality and the relation $u^Tv \le \gnorm{u}{}{} \gnorm{v}{\ast}{}$, we have 
	\ifthenelse{\value{num} = 0}
	{\begin{equation}\label{eq:int_rel149}
		\begin{split}
			\gamma_i\theta_i\inprod{q_i-\wb{q}_i}{y_{i+1}-y_i} -\tfrac{\gamma_i\tau_i}{6}\gnorm{y_{i+1}-y_i}{2}{2} &\le \tfrac{3\gamma_{i}\theta_i^2}{2\tau_i}\gnorm{q_i-\wb{q}_i}{2}{2},\\
			\inprod{\gamma_i\delta_i^G}{x_i-x_{i+1}} - \tfrac{\gamma_i(\eta_i-L_0-BL_f)}{4}W(x_{i+1},x_i) &\le \tfrac{2\gamma_i}{\eta_i-L_0-BL_f}\gnorm{\delta_i^G}{*}{2},\\
			\gamma_i\HH(y,B) \gnorm{x_{i+1}-x_i}{}{}- \tfrac{\gamma_i(\eta_i-L_0-BL_f)}{4}W(x_{i+1},x_i) &\le \tfrac{2\gamma_i}{\eta_i-L_0-BL_f}\HH({y},B)^2.
		\end{split}
	\end{equation}
	}
	{\begin{equation}\label{eq:int_rel149}
		\begin{split}
			\gamma_t\theta_t\inprod{q_t-\wb{q}_t}{y_{t+1}-y_t} -\tfrac{\gamma_t\tau_t}{6}\gnorm{y_{t+1}-y_t}{2}{2} &\le \tfrac{3\gamma_{t}\theta_t^2}{2\tau_t}\gnorm{q_t-\wb{q}_t}{2}{2},\\
			\inprod{\gamma_t\delta_t^G}{x_t-x_{t+1}} - \tfrac{\gamma_t(\eta_t-L_0-BL_f)}{4}W(x_{t+1},x_t) &\le \tfrac{2\gamma_t}{\eta_t-L_0-BL_f}\gnorm{\delta_t^G}{*}{2},\\
			\gamma_t\HH(y,B) \gnorm{x_{t+1}-x_t}{}{}- \tfrac{\gamma_t(\eta_t-L_0-BL_f)}{4}W(x_{t+1},x_t) &\le \tfrac{2\gamma_t}{\eta_t-L_0-BL_f}\HH({y},B)^2.
		\end{split}
	\end{equation}
	}
	Using \eqref{eq:int_rel148} and \eqref{eq:int_rel149} for \ifthenelse{\value{num} = 0}{$i = \textcolor{blue}{1}, \dots, t$}{$t = 0, \dots, T-1$} inside \eqref{eq:int_rel146} and noting \eqref{eq:int_rel154}, we have
	\ifthenelse{\value{num} = 0}
	{\begin{align}
			\tsum_{i = 0}^{t} \gamma_i &Q(z_{i+1}, z)  +  \gamma_{t} \inprod{q_{t+1}}{y_{t+1}-y}+ \tsum_{i =0}^{t}\gamma_i[\inprod{\delta^G_i}{x_i-x} - \inprod{\delta^F_{i+1}}{y_{i+1}-y} ] \nonumber\\
			&\le \gamma_0\eta_0W(x,x_0) -\gamma_{t}(\eta_t+\alpha_{0, t})W(x, x_{t+1}) + \tfrac{\gamma_0\tau_0}{2}\gnorm{y-y_0}{2}{2} - \tfrac{\gamma_{t}\tau_{t}}{2}\gnorm{y-y_{t+1}}{2}{2}\nonumber\\
			&\qquad+\tsum_{ i =1}^{t}\tfrac{3\gamma_i\theta_i^2}{2\tau_i}\gnorm{q_i-\wb{q}_i}{2}{2} + \tsum_{ i =0}^t\bracket[\big]{\tfrac{2\gamma_i}{\eta_i-L_0-BL_f}\gnorm{\delta_i^G}{*}{2} +\tfrac{2\gamma_i}{\eta_i-L_0-BL_f}\HH({y},B)^2}\nonumber \\
			&\qquad - \tfrac{\gamma_{t-1}(\eta_{t-1}-L_0-BL_f)}{4}W(x_{t},x_{t-1})  - \tfrac{\gamma_{t}(\eta_{t}-L_0-BL_f)}{2}W(x_{t+1},x_{t}),\label{eq:int_rel150}
	\end{align}}
	{	\begin{align}
			\tsum_{t = 0}^{T-1} \gamma_t &Q(z_{t+1}, z)  +  \gamma_{T-1} \inprod{q_T}{y_T-y}+ \tsum_{t =0}^{T-1}\gamma_t[\inprod{\delta^G_t}{x_t-x} - \inprod{\delta^F_{t+1}}{y_{t+1}-y} ] \nonumber\\
			&\le \gamma_0\eta_0W(x,x_0) -\gamma_{T-1}(\eta_t+\alpha_{0, T-1})W(x, x_T) + \tfrac{\gamma_0\tau_0}{2}\gnorm{y-y_0}{2}{2} - \tfrac{\gamma_{T-1}\tau_{T-1}}{2}\gnorm{y-y_T}{2}{2}\nonumber\\
			&\qquad+\tsum_{ t =0}^{T-1} \bracket[\big]{\tfrac{3\gamma_t\theta_t^2}{2\tau_t}\gnorm{q_t-\wb{q}_t}{2}{2} + \tfrac{2\gamma_t}{\eta_t-L_0-BL_f}\gnorm{\delta_t^G}{*}{2} +\tfrac{2\gamma_t}{\eta_t-L_0-BL_f}\HH({y},B)^2}\nonumber \\
			&\qquad - \tfrac{\gamma_{T-2}(\eta_{T-2}-L_0-BL_f)}{4}W(x_{T-1},x_{T-2})  - \tfrac{\gamma_{T-1}(\eta_{T-1}-L_0-BL_f)}{2}W(x_T,x_{T-1}),\label{eq:int_rel150}
	\end{align}}
	where on the left hand side of the above relation, we use the fact that $q_0 = \ell_{F}(x_0)-\ell_{F}(x_{-1}) + \chi(x_0)-\chi(x_{-1}) = \zero$. 
	Using \eqref{eq:int_rel147}, we have
	\begingroup
	\allowdisplaybreaks
	\ifthenelse{\value{num} = 0}
		{\begin{align}
			&-\gamma_t\inprod{\wb{q}_{t+1}}{y_{t+1}-y} -\tfrac{\gamma_t\tau_t}{3}\gnorm{y-y_{t+1}}{2}{2} \nonumber \\
			&\quad - \tfrac{\gamma_{t-1}(\eta_{t-1}-L_0-BL_f)}{4}W(x_t,x_{t-1})  - \tfrac{\gamma_t(\eta_t-L_0-BL_f)}{2}W(x_{t+1},x_t) \nonumber \\
			& \le (M_f+M_\chi)\gamma_t \gnorm{x_{t+1}-x_t}{}{} \gnorm{y_{t+1}-y}{2}{}-\tfrac{\gamma_t\tau_t}{12}\gnorm{y-y_{t+1}}{2}{2} - \tfrac{\gamma_t(\eta_t-L_0-BL_f)}{2}W(x_{t+1},x_t)\nonumber \\
			& \quad+2M_f\gamma_t \gnorm{x_t-x_{t-1}}{}{} \gnorm{y_{t+1}-y}{2}{}	-\tfrac{\gamma_t\tau_t}{6}\gnorm{y-y_{t+1}}{2}{2} - \tfrac{\gamma_{t-1}(\eta_{t-1}-L_0-BL_f)}{4}W(x_t,x_{t-1})\nonumber \\
			&\quad -\tfrac{\gamma_t\tau_t}{12}\gnorm{y_{t+1}-y}{2}{2}\nonumber\\
			& \le -\tfrac{\gamma_t\tau_t}{12}\gnorm{y_{t+1}-y}{2}{2}, \label{eq:int_rel151}
		\end{align}}
		{\begin{align}
			&-\gamma_{T-1}\inprod{\wb{q}_T}{y_T-y} -\tfrac{\gamma_{T-1}\tau_{T-1}}{3}\gnorm{y-y_T}{2}{2} \nonumber \\
			&\quad - \tfrac{\gamma_{T-2}(\eta_{T-2}-L_0-BL_f)}{4}W(x_{T-1},x_{T-2})  - \tfrac{\gamma_{T-1}(\eta_{T-1}-L_0-BL_f)}{2}W(x_T,x_{T-1}) \nonumber \\
			& \le (M_f+M_\chi)\gamma_{T-1} \gnorm{x_T-x_{T-1}}{}{} \gnorm{y_T-y}{2}{}-\tfrac{\gamma_{T-1}\tau_{T-1}}{12}\gnorm{y-y_T}{2}{2} - \tfrac{\gamma_{T-1}(\eta_{T-1}-L_0-BL_f)}{2}W(x_T,x_{T-1})\nonumber \\
			& \quad+2M_f\gamma_{T-1} \gnorm{x_{T-1}-x_{T-2}}{}{} \gnorm{y_T-y}{2}{}	-\tfrac{\gamma_{T-1}\tau_{T-1}}{6}\gnorm{y-y_T}{2}{2} - \tfrac{\gamma_{T-2}(\eta_{T-2}-L_0-BL_f)}{4}W(x_{T-1},x_{T-2})\nonumber \\
			&\quad -\tfrac{\gamma_{T-1}\tau_{T-1}}{12}\gnorm{y_T-y}{2}{2}\nonumber\\
			& \le -\tfrac{\gamma_{T-1}\tau_{T-1}}{12}\gnorm{y_T-y}{2}{2}, \label{eq:int_rel151}
		\end{align}
	}
	\endgroup
	where the last relation follows from \eqref{eq:int_rel155-1-b} and \eqref{eq:int_rel155-2-b}, 
	Young's inequality and the fact that \\[1mm]
	\ifthenelse{\value{num} = 0}
	{\begin{tabular}{McM}
			(2M_f)\gamma_t & \braces[\big]{\tfrac{\gamma_{t-1}\gamma_t\tau_t(\eta_{t-1}-L_0-BL_f)}{12}}^{1/2} &$\Leftrightarrow$ &(2M_f)^2\tfrac{1}{\theta_t} &\tfrac{{\tau_t(\eta_{t-1}-L_0-BL_f)}}{12},\\
			(M_f+M_\chi)\gamma_t & \braces[\big]{\tfrac{\gamma_t^2\tau_t(\eta_t-L_0-BL_f)}{12}}^{1/2} &$\Leftrightarrow$ &(M_f+M_\chi)^2 & \tfrac{\tau_t(\eta_t-L_0-BL_f)}{12}.
	\end{tabular}}
	{\begin{tabular}{McM}
			(2M_f)\gamma_{T-1} & \braces[\big]{\tfrac{\gamma_{T-2}\gamma_{T-1}\tau_{T-1}(\eta_{T-2}-L_0-BL_f)}{12}}^{1/2} &$\Leftrightarrow$ &(2M_f)^2\tfrac{1}{\theta_{T-1}} &\tfrac{{\tau_T(\eta_{T-2}-L_0-BL_f)}}{12},\\
			(M_f+M_\chi)\gamma_{T-1} & \braces[\big]{\tfrac{\gamma_{T-1}^2\tau_{T-1}(\eta_{T-1}-L_0-BL_f)}{12}}^{1/2} &$\Leftrightarrow$ &(M_f+M_\chi)^2 & \tfrac{\tau_{T-1}(\eta_{T-1}-L_0-BL_f)}{12}.
	\end{tabular}}
	\\ [1mm]
	Finally, again using Young's inequality and Cauchy-Schwarz inequality, we have
	\ifthenelse{\value{num} = 0}
	{\begin{equation}\label{eq:int_rel152}
		-\gamma_t\inprod{q_{t+1}-\wb{q}_{t+1}} {y_{t+1}-y} -\tfrac{\gamma_t\tau_t}{6}\gnorm{y-y_{t+1}}{2}{2} \le\tfrac{3\gamma_t}{2\tau_t}\gnorm{q_{t+1}-\wb{q}_{t+1}}{2}{2}.
	\end{equation}
	}
	{\begin{equation}\label{eq:int_rel152}
		-\gamma_{T-1}\inprod{q_T-\wb{q}_T} {y_T-y} -\tfrac{\gamma_{T-1}\tau_{T-1}}{6}\gnorm{y-y_T}{2}{2} \le\tfrac{3\gamma_{T-1}}{2\tau_{T-1}}\gnorm{q_T-\wb{q}_T}{2}{2}.
	\end{equation}
	}
	Using \eqref{eq:int_rel151} and \eqref{eq:int_rel152} in relation \eqref{eq:int_rel150}, noting that $q_0-\wb{q}_0 = \zero$
	and replacing the definition of $\HH(y, B)$, 
	we obtain \eqref{eq:int_rel153}. {Hence, we conclude the proof.}
\end{proofx}

We now aim to convert the bound on the primal-dual gap function $Q$ in Lemma \ref{thm:linearized_conex_1} into bounds on the optimality and infeasibility of the ergodic solution $\wb{x}_T$ according to Definition \ref{define-apprx-opt}. For proving this lemma, we need one more simple result which is stated below.
\begin{lemma} \label{lem:tech_res1}
	Let $\rho_0, \dots, \rho_j$ be a sequence of elements in $\Rbb^n$ and let $S$ be a convex set in $\Rbb^n$. Define the sequence $v_t, t = 0,1,\dots$, as follows: $v_0 \in S$ and 
	\begin{equation*}
	v_{t+1} = \argmin_{x \in S} \inprod{\rho_t}{x} + \tfrac{1}{2}\gnorm{x-v_t}{2}{2}.
	\end{equation*}
	Then for any $x \in S$ and $t \ge 0$, the following inequalities hold
	\begin{equation}\label{eq:int_rel114_1}
	\inprod{\rho_t}{v_t-x} \le \tfrac{1}{2}\gnorm{x-v_t}{2}{2}-\tfrac{1}{2} \gnorm{x-v_{t+1}}{2}{2}+ \tfrac{1}{2}\gnorm{\rho_t}{2}{2},
	\end{equation}
	\begin{equation}\label{eq:int_rel114}
	\tsum_{ t =0}^{j} \inprod{\rho_t}{v_t-x} \le \tfrac{1}{2}\gnorm{x-v_0}{2}{2} + \tfrac{1}{2}\tsum_{t=0}^{j} \gnorm{\rho_t}{2}{2}.
	\end{equation}
\end{lemma}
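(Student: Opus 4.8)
The plan is to derive the one-step inequality \eqref{eq:int_rel114_1} first and then obtain \eqref{eq:int_rel114} by telescoping. For the one-step bound I would invoke Lemma \ref{lem:3-point} with the Euclidean prox-function, i.e.\ take $\omega(\cdot) = \tfrac12\gnorm{\cdot}{2}{2}$ so that $W(x,y) = \tfrac12\gnorm{x-y}{2}{2}$, and apply it with $g(x) = \inprod{\rho_t}{x}$ (which satisfies \eqref{eq:strong_conv_g} with $\mu = 0$), $\wt x = v_t$, and $\wb x = v_{t+1}$. Lemma \ref{lem:3-point} then gives, for every $x \in S$,
\[
\inprod{\rho_t}{v_{t+1}} + \tfrac12\gnorm{v_{t+1}-v_t}{2}{2} + \tfrac12\gnorm{x-v_{t+1}}{2}{2} \le \inprod{\rho_t}{x} + \tfrac12\gnorm{x-v_t}{2}{2},
\]
equivalently $\inprod{\rho_t}{v_{t+1}-x} \le \tfrac12\gnorm{x-v_t}{2}{2} - \tfrac12\gnorm{x-v_{t+1}}{2}{2} - \tfrac12\gnorm{v_{t+1}-v_t}{2}{2}$. (Alternatively, the same inequality follows directly from the first-order optimality condition $\inprod{\rho_t + (v_{t+1}-v_t)}{x - v_{t+1}} \ge 0$ combined with the three-point identity $\inprod{v_t - v_{t+1}}{v_{t+1}-x} = \tfrac12\gnorm{x-v_t}{2}{2} - \tfrac12\gnorm{x-v_{t+1}}{2}{2} - \tfrac12\gnorm{v_{t+1}-v_t}{2}{2}$.)

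Next I would split $\inprod{\rho_t}{v_t-x} = \inprod{\rho_t}{v_{t+1}-x} + \inprod{\rho_t}{v_t - v_{t+1}}$ and bound the last term by Young's inequality, $\inprod{\rho_t}{v_t - v_{t+1}} \le \tfrac12\gnorm{\rho_t}{2}{2} + \tfrac12\gnorm{v_t - v_{t+1}}{2}{2}$. Adding this to the displayed estimate, the two occurrences of $\tfrac12\gnorm{v_{t+1}-v_t}{2}{2}$ cancel exactly, which yields \eqref{eq:int_rel114_1}.

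Finally, for \eqref{eq:int_rel114} I would sum \eqref{eq:int_rel114_1} over $t = 0, \dots, j$; the terms $\tfrac12\gnorm{x-v_t}{2}{2} - \tfrac12\gnorm{x-v_{t+1}}{2}{2}$ telescope to $\tfrac12\gnorm{x-v_0}{2}{2} - \tfrac12\gnorm{x-v_{j+1}}{2}{2}$, and discarding the nonpositive term $-\tfrac12\gnorm{x-v_{j+1}}{2}{2}$ produces the claimed bound. The only step requiring any attention is the exact cancellation of the $\gnorm{v_{t+1}-v_t}{2}{2}$ terms — this is precisely what pins down the coefficient $\tfrac12$ in front of $\gnorm{\rho_t}{2}{2}$ — while the remaining manipulations are entirely routine; there is no real obstacle here.
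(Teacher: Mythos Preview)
Your proposal is correct and follows essentially the same approach as the paper: both invoke Lemma~\ref{lem:3-point} with $g(x)=\inprod{\rho_t}{x}$, $W(y,x)=\tfrac12\gnorm{y-x}{2}{2}$, $\mu=0$, then combine the resulting three-point inequality with the Young-type bound $\inprod{\rho_t}{v_t-v_{t+1}}\le \tfrac12\gnorm{\rho_t}{2}{2}+\tfrac12\gnorm{v_{t+1}-v_t}{2}{2}$ to obtain \eqref{eq:int_rel114_1}, and finish by telescoping and dropping the final nonnegative term.
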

\begin{proofx}
	Using Lemma \ref{lem:3-point} with $g(x) =  \inprod{\rho_t}{x}$, $W(y,x) = \tfrac{1}{2}\gnorm{y-x}{2}{2}$, $\wt{x} = v_t$ and $\mu = 0$, we have, due to the optimality of $v_{t+1}$,
	\begin{equation*}
	\inprod{\rho_t}{v_{t+1}-x} + \tfrac{1}{2}\gnorm{v_{t+1}-v_t}{2}{2} + \tfrac{1}{2} \gnorm{x-v_{t+1}}{2}{2}\le  \tfrac{1}{2}\gnorm{x-v_t}{2}{2},
	\end{equation*}
	is satisfied for all $x \in S$. The above relation and the fact
	\begin{equation*}
	\inprod{\rho_t}{v_t-v_{t+1}} -\tfrac{1}{2}\gnorm{v_{t+1}-v_t}{2}{2} \le \tfrac{1}{2}\gnorm{\rho_t}{2}{2},
	\end{equation*}
	imply that 
	\begin{equation*}
	\inprod{\rho_t}{v_t-x} \le \tfrac{1}{2}\gnorm{x-v_t}{2}{2}-\tfrac{1}{2} \gnorm{x-v_{t+1}}{2}{2}+ \tfrac{1}{2}\gnorm{\rho_t}{2}{2},
	\end{equation*}
	for all $x \in S$. Summing up the above relations from $t = 0$ to $j$ and noting the nonnegativity of $\gnorm{\cdot}{2}{2}$, we obtain \eqref{eq:int_rel114}. Hence we conclude the proof.
\end{proofx}

Now we are ready to prove the lemma converting bound on the primal-dual gap to infeasibility and optimality gap. 
\begin{lemma} \label{thm:linearized_conex_Main}
	Suppose all assumptions in Lemma \ref{thm:linearized_conex_1} are satisfied. Then, for $T\ge 1$, we have
	\begin{equation}\label{eq:bounding_optimality}
	\begin{split}
	\Ebb&[\psi_0(\wb{x}_T)-\psi_0(x^*)] \le \tfrac{1}{\Gamma_T}\big[\gamma_0\eta_0W(x^*, x_0) + \tfrac{\gamma_0\tau_0}{2}\gnorm{y_0}{2}{2} \\
	&+ \tsum_{ t =0}^{T-1}\tfrac{2\gamma_t}{\eta_t- L_{0}-BL_f}\big(\Ebb[\gnorm{\delta^G_t}{\ast}{2}] + H_0^2 \big) +\paran[\big]{\tsum_{t=1}^{T-1}\tfrac{12\gamma_{t}\theta_t^2}{\tau_t} + \tfrac{12\gamma_{T-1}}{\tau_{T-1}}}(\sigma_{f}^2+D_X^2\gnorm{\sigma}{2}{2})\big],
	\end{split}
	\end{equation}
	\begin{equation}\label{eq:bouding_last_iterate}
	\begin{split}
	\gamma_{T-1}(\eta_{T-1}+&\alpha_{0, T-1})\Ebb[W(x^*, x_T)] \le \tfrac{\gamma_0\tau_0}{2} \gnorm{y^*- y_0}{2}{2} +  \gamma_0\eta_0 W(x^*, x_0) \\
	&+\paran[\big]{\tsum_{t=1}^{T-1}\tfrac{12\gamma_{t}\theta_t^2}{\tau_t}
		+ \tfrac{12\gamma_{T-1}}{\tau_{T-1}} }(\sigma_{f}^2+ D_X^2\gnorm{\sigma}{2}{2}) \\
	&+ \tsum_{ t =0}^{T-1} \tfrac{2\gamma_t}{\eta_t- L_0-BL_f}\big\{\Ebb[\gnorm{\delta^G_t}{\ast}{2}] +(H_0 + \gnorm{y^*}{2}{}H_f + \bracket{ \gnorm{y^*}{2}{}-B}_+)^2\big\}  ,
	\end{split}
	\end{equation}
	and
	\begin{align}
	\Ebb[\gnorm{\relu{\psi(\wb{x}_T)}}{2}{}] &\le \tfrac{1}{\Gamma_T} \Big[ 
	\gamma_0\tau_0\gnorm{y_0}{2}{2}+3(\gnorm{y^*}{2}{}+1)^2\gamma_0\tau_0 + \gamma_0\eta_0W(x^*,x_0) \nonumber \\
	&\quad+\tsum_{t =0}^{T-1}\tfrac{2\gamma_t}{\eta_t-L_0-BL_f} \bracket[\big]{\Ebb[\gnorm{\delta^G_t}{*}{2}] + (H_0+(\gnorm{y^*}{2}{}+1)H_f + \tfrac{L_fD_X[\gnorm{y^*}{2}{}+1-B]_+}{2})^2} \nonumber \\
	&\quad+\paran[\big]{\tsum_{ t =1}^{T-1}	\tfrac{12\gamma_t\theta_t^2}{\tau_t}+\tsum_{t=0}^{T-1} \tfrac{\gamma_t}{\tau_t}+ \tfrac{12\gamma_{T-1}}{\tau_{T-1}} }(\sigma_{f}^2+D_X^2\gnorm{\sigma}{2}{2})\Big].\label{eq:bounding_infeasibility}
	\end{align}
	where $\Gamma_T := \tsum_{t =0}^{T-1}\gamma_t$.
\end{lemma}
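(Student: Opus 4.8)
The plan is to specialize the master inequality \eqref{eq:int_rel153} of Lemma~\ref{thm:linearized_conex_1} to three judicious choices of the free point $z=(x,y)$, pass to expectations, and discard the manifestly nonpositive terms. Before that, a bit of bookkeeping common to all three bounds. In Algorithm~\ref{alg-without-guess} the iterates $x_t$ and $y_{t+1}$ depend only on $\xi_0,\wb{\xi}_0,\dots,\xi_{t-1},\wb{\xi}_{t-1}$, while $x_{t+1}$ additionally uses $\xi_t$; hence by \eqref{eq:stochastic_oracle} and the independence of $\xi_t$ (resp. $\wb{\xi}_t$) from the preceding history, $\delta^G_t$ has zero conditional mean given the data up to and including $x_t,y_{t+1}$, and $\delta^F_{t+1}$ has zero conditional mean given the data up to and including $x_{t+1},y_{t+1}$. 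Consequently $\Ebb\inprod{\delta^G_t}{x_t-x}=0$ and $\Ebb\inprod{\delta^F_{t+1}}{y_{t+1}-y}=0$ whenever $x,y$ are deterministic, and $\{\gamma_t\delta^F_{t+1}\}$ is a martingale-difference sequence. Moreover $q_t-\wb{q}_t=\delta^F_t-\delta^F_{t-1}$ by the definitions, and splitting $\delta^F_t$ into its function-noise and gradient-noise parts, using $\gnorm{x_t-x_{t-1}}{}{}\le D_X$ and \eqref{eq:stochastic_oracle}, gives $\Ebb\gnorm{\delta^F_t}{2}{2}\le 2(\sigma_f^2+D_X^2\gnorm{\sigma}{2}{2})$ and therefore $\Ebb\gnorm{q_t-\wb{q}_t}{2}{2}\le 8(\sigma_f^2+D_X^2\gnorm{\sigma}{2}{2})$; this is the only place the $(\sigma_f^2+D_X^2\gnorm{\sigma}{2}{2})$ terms enter, and the factor $\tfrac32\cdot 8=12$ explains their coefficients in \eqref{eq:bounding_optimality}--\eqref{eq:bounding_infeasibility}.

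For the optimality bound \eqref{eq:bounding_optimality} I would take $z=(x^*,\zero)$. Since $x^*$ is feasible and $y_{t+1}\ge\zero$, $Q(z_{t+1},z)=\psi_0(x_{t+1})-\LL(x^*,y_{t+1})\ge\psi_0(x_{t+1})-\psi_0(x^*)$, and convexity of $\psi_0$ together with $\wb{x}_T=\Gamma_T^{-1}\sum_t\gamma_tx_{t+1}$ gives $\sum_t\gamma_tQ(z_{t+1},z)\ge\Gamma_T(\psi_0(\wb{x}_T)-\psi_0(x^*))$. Plugging this into \eqref{eq:int_rel153} (with $y=\zero$ the parenthetical Lipschitz-type term reduces to $H_0$), taking expectations so the $\delta^G$- and $\delta^F$-cross terms vanish, dropping the nonpositive $-\gamma_{T-1}(\eta_T+\alpha_{0,T-1})W(x^*,x_T)$ and $-\tfrac{\gamma_{T-1}\tau_{T-1}}{12}\gnorm{y_T}{2}{2}$, inserting the bound on $\Ebb\gnorm{q_t-\wb{q}_t}{2}{2}$, and dividing by $\Gamma_T$ yields \eqref{eq:bounding_optimality}. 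For the last-iterate bound \eqref{eq:bouding_last_iterate} I would instead take $z=z^*=(x^*,y^*)$; by \eqref{eq:saddle_def} each $Q(z_{t+1},z^*)\ge 0$, so the entire $\sum_t\gamma_tQ$ term may be dropped from the left of \eqref{eq:int_rel153}; rearranging to keep the $W(x^*,x_T)$-term on the left, taking expectations (again the cross terms vanish since $z^*$ is deterministic), discarding $-\tfrac{\gamma_{T-1}\tau_{T-1}}{12}\gnorm{y^*-y_T}{2}{2}$, and using the $q_t-\wb{q}_t$ estimate gives \eqref{eq:bouding_last_iterate}.

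The infeasibility bound \eqref{eq:bounding_infeasibility} is the delicate one. Assume $\relu{\psi(\wb{x}_T)}\neq\zero$ (otherwise trivial) and set $\hat y:=(\gnorm{y^*}{2}{}+1)\relu{\psi(\wb{x}_T)}\big/\gnorm{\relu{\psi(\wb{x}_T)}}{2}{}\in\BB^2_+(\gnorm{y^*}{2}{}+1)$, a random vector whose Euclidean norm equals the \emph{deterministic} constant $\gnorm{y^*}{2}{}+1$. As above, convexity of $\psi_0,\psi_i$ and feasibility of $x^*$ give $\sum_t\gamma_tQ(z_{t+1},(x^*,\hat y))\ge\Gamma_T[\psi_0(\wb{x}_T)-\psi_0(x^*)+\inprod{\hat y}{\psi(\wb{x}_T)}]$; since $\hat y$ is supported on the coordinates where $\psi_i(\wb{x}_T)>0$ we have $\inprod{\hat y}{\psi(\wb{x}_T)}=(\gnorm{y^*}{2}{}+1)\gnorm{\relu{\psi(\wb{x}_T)}}{2}{}$, and from \eqref{eq:saddle_def} ($\LL(\wb{x}_T,y^*)\ge\LL(x^*,y^*)=\psi_0(x^*)$ plus complementary slackness) we get $\psi_0(\wb{x}_T)-\psi_0(x^*)\ge-\gnorm{y^*}{2}{}\gnorm{\relu{\psi(\wb{x}_T)}}{2}{}$, whence $\sum_t\gamma_tQ(z_{t+1},(x^*,\hat y))\ge\Gamma_T\gnorm{\relu{\psi(\wb{x}_T)}}{2}{}$. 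I would then bound the right side of \eqref{eq:int_rel153} at $(x,y)=(x^*,\hat y)$: because $\gnorm{\hat y}{2}{}$ is a deterministic constant, the parenthetical term is exactly $(H_0+(\gnorm{y^*}{2}{}+1)H_f+\tfrac{L_fD_X[\gnorm{y^*}{2}{}+1-B]_+}{2})^2$ and $\tfrac{\gamma_0\tau_0}{2}\gnorm{\hat y-y_0}{2}{2}\le\gamma_0\tau_0(\gnorm{y^*}{2}{}+1)^2+\gamma_0\tau_0\gnorm{y_0}{2}{2}$; the nonpositive $W(x^*,x_T)$- and $\gnorm{\hat y-y_T}{2}{2}$-terms are dropped, and $\Ebb\inprod{\delta^G_t}{x_t-x^*}=0$ still holds. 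The obstacle is the term $-\sum_t\gamma_t\inprod{\delta^F_{t+1}}{y_{t+1}-\hat y}$: its $y_{t+1}$-part is a mean-zero martingale sum, but $\inprod{\sum_t\gamma_t\delta^F_{t+1}}{\hat y}$ is \emph{not}, because $\hat y$ is a function of the whole sample path. The plan is to tame it with the auxiliary-sequence device of Lemma~\ref{lem:tech_res1}: run a recursion $v_{t+1}=\argmin_{v\in\BB^2_+(\gnorm{y^*}{2}{}+1)}\{\inprod{\rho_t}{v}+\tfrac{\lambda_t}{2}\gnorm{v-v_t}{2}{2}\}$ with $v_0=y_0$, $\rho_t=-\gamma_t\delta^F_{t+1}$, and (step-dependent) weights $\lambda_t=\gamma_t\tau_t$, which are nonincreasing by \eqref{eq:int_rel154}, so \eqref{eq:int_rel114} yields $\inprod{\sum_t\gamma_t\delta^F_{t+1}}{\hat y}\le\sum_t\gamma_t\inprod{\delta^F_{t+1}}{v_t}+\tfrac{\gamma_0\tau_0}{2}\gnorm{\hat y-v_0}{2}{2}+\sum_t\tfrac{\gamma_t}{2\tau_t}\gnorm{\delta^F_{t+1}}{2}{2}$; the first sum is again mean zero (each $v_t$ precedes $\delta^F_{t+1}$ in the filtration), the last sum has expectation bounded by $\sum_t\tfrac{\gamma_t}{\tau_t}(\sigma_f^2+D_X^2\gnorm{\sigma}{2}{2})$, and the $\tfrac{\gamma_0\tau_0}{2}\gnorm{\hat y-v_0}{2}{2}$ term contributes the remaining $(\gnorm{y^*}{2}{}+1)^2\gamma_0\tau_0$-type terms. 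Collecting everything and dividing by $\Gamma_T$ produces \eqref{eq:bounding_infeasibility}. I expect this last derandomization step — pinning down the weights $\lambda_t$ in Lemma~\ref{lem:tech_res1} so that its residual lands exactly on $\sum_t\gamma_t/\tau_t$, together with the careful measurability bookkeeping for the two interleaved sample streams $\{\xi_t\}$ and $\{\wb{\xi}_t\}$ — to be the only genuinely non-routine part of the argument.
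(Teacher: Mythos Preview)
Your proposal is correct and mirrors the paper's proof essentially step for step: the same three specializations $z=(x^*,\zero)$, $z=(x^*,y^*)$, and $z=(x^*,\hat y)$ of \eqref{eq:int_rel153}; the same variance bound $\Ebb\gnorm{q_t-\wb q_t}{2}{2}\le 8(\sigma_f^2+D_X^2\gnorm{\sigma}{2}{2})$; the same KKT argument yielding $Q(\wb z_T,(x^*,\hat y))\ge\gnorm{\relu{\psi(\wb x_T)}}{2}{}$; and the same auxiliary-sequence trick from Lemma~\ref{lem:tech_res1} to absorb $\inprod{\sum_t\gamma_t\delta^F_{t+1}}{\hat y}$. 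Regarding the point you flag as non-routine, the paper handles the step-dependent weights exactly as you suggest: it applies the one-step relation \eqref{eq:int_rel114_1}, multiplies by $\gamma_t\tau_t$, and telescopes using $\gamma_t\tau_t\le\gamma_{t-1}\tau_{t-1}$ from \eqref{eq:int_rel154}; the only cosmetic difference is that the paper's auxiliary iterate appearing with $\delta^F_{t+1}$ is indexed $y^v_{t+1}$ (already in $\BB^2_+(\gnorm{y^*}{2}{}+1)$), which yields the constants $\gamma_0\tau_0\gnorm{y_0}{2}{2}+3(\gnorm{y^*}{2}{}+1)^2\gamma_0\tau_0$ rather than your $2+2$ split.
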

\begin{proofx}
	Notice that conditional random variables $[G_i(x_t, \xi_t)\vert \xi_{[t-1]}, \wb{\xi}_{[t-2]}],  i = 0, \dots, m$ 
	satisfy properties of SO in \eqref{eq:stochastic_oracle} because $x_t$ is a constant when conditioned on random variables $\xi_{[t-1]} := (\xi_0, \dots, \xi_{t-1})$ and $\wb{\xi}_{[t-2]} := (\wb{\xi}_0, \dots, \wb{\xi}_{t-2})$. Also, observe that, $y_{t+1}$ is a constant when conditioned on random variables $\xi_{[t-1]}$ and $\wb{\xi}_{[t-1]}$. In particular, using \eqref{eq:stochastic_oracle}, we have
	\begin{equation}\label{eq:int_rel157}
	\begin{split}
	\Ebb[\inprod{\delta^G_{t}}{x_{t}-x}]  &= \Ebb\inprod{\Ebb_{\vert\xi_{[t-1]},\wb{\xi}_{[t-1]} }[\delta^G_t]}{x_t-x}= 0,\\
	%
	\end{split}
	\end{equation}
	for any non-random $x$. This follows due to the following relation
	\begin{align*}
	\Ebb&_{\vert\xi_{[t-1]}, \wb{\xi}_{[t-1]}}[\delta_t^G] \\
	&= \Ebb_{\vert\xi_{[t-1]},\wb{\xi}_{[t-1]}}[G_{0}(x_t, \xi_t) -\subgrad{ f_0}(x_t)] + \Ebb\tsum_{i =1}^my_{t+1}^\br{i} \Ebb_{\vert\xi_{[t-1]},\wb{\xi}_{[t-1]}}[G_i(x_t, \xi_t)-\subgrad{ f_i}(x_t)] = \zero.
	\end{align*}
	Similarly, using \eqref{eq:stochastic_oracle}, we have 
	\begin{equation}\label{eq:int_rel158}
	\Ebb[\inprod{ \delta^F_{t+1} }{y_{t+1} - y}] = \Ebb [\inprod[\big]{ \Ebb_{\vert\xi_{[t]},\wb{\xi}_{[t-1]}}[\delta^F_{t+1}] }{y_{t+1} - y}] = 0,
	\end{equation}
	for any non-random $y$. Here, we note that
	\begin{equation}\label{eq:int_rel161}
	\begin{split}
	\Ebb_{\vert\xi_{[t]}, \wb{\xi}_{[t-1]} } [\delta_{t+1}^F] &= \Ebb_{\vert\xi_{[t]}, \wb{\xi}_{[t-1]} }[F(x_t, \wb{\xi}_t)]-f(x_t) \\
	& \quad+  \paran[\big]{\Ebb_{\vert\xi_{[t]}, \wb{\xi}_{[t-1]}}[{\Gbf}(x_t, \wb{\xi}_t)] -\subgrad{f}(x_t)} ^T (x_{t+1}-x_t) = \zero,
	\end{split}
	\end{equation}
	where the first term in RHS is $\zero$ due to the third relation in \eqref{eq:stochastic_oracle} applied to $\wb{\xi}_t$, 
	the second term is $\zero$ due to the second relation of \eqref{eq:stochastic_oracle} applied to $\wb{\xi}_t$ and the common fact for both the terms that $x_t, x_{t+1}$ are constants for given $\xi_{[t]}, \wb{\xi}_{[t-1]}$.
	We note that 
	\begin{align}
	\Ebb[\gnorm{\delta^F_t}{2}{2}] &\le 2\Ebb[\gnorm{F(x_{t-1}, \wb{\xi}_{t-1})-f(x_{t-1})}{2}{2}] + 2\Ebb[\gnorm{[{\Gbf}(x_{t-1}, \wb{\xi}_{t-1})-\subgrad{ f}(x_{t-1})]^T(x_t-x_{t-1})}{2}{2} ] \nonumber\\
	&\le 2\sigma_f^2+2\Ebb[\tsum_{ i =1}^{m}\braces[\big]{\paran{G_i(x_{t-1}, \wb{\xi}_{t-1})-\subgrad{ f}_i(x_{t-1})}^T(x_t-x_{t-1})}^2 ] \nonumber\\
	&\le 2\sigma_f^2+2\Ebb[\tsum_{i=1}^m\gnorm{G_i(x_{t-1}, \wb{\xi}_{t-1})-\subgrad{f}(x_{t-1})}{\ast}{2}\gnorm{x_t-x_{t-1}}{}{2} ]\nonumber\\
	&\le 2\sigma_f^2 + 2D_X^2\gnorm{\sigma}{2}{2}.\label{eq:int_rel160}
	\end{align}
	{Recall that $\sigma = [\sigma_1, \dots, \sigma_m]^T$.}
	Then, in view of the above relation and definitions of $q_t, \wb{q}_t$, we have
	\begin{equation}\label{eq:int_rel100}
	\begin{split}
	\Ebb[\gnorm{q_t-\wb{q}_t}{2}{2}] &= \Ebb[\gnorm{\ell_F(x_t)-\ell_f(x_t) + \ell_F(x_{t-1})-\ell_f(x_{t-1})}{2}{2}] \\
	&\le 2\Ebb[\gnorm{\delta^F_t}{2}{2}] + 2\Ebb[\gnorm{\delta^F_{t-1}}{2}{2}] \le 8(\sigma_f^2+ D_X^2\gnorm{\sigma}{2}{2}).
	\end{split}
	\end{equation}
	In \eqref{eq:int_rel153}, choosing $t = T-1 \ge 0$, then taking expectation on both sides, and using relation \eqref{eq:int_rel157}, \eqref{eq:int_rel158} and \eqref{eq:int_rel100}, we have for all non-random\footnote{This $x,y$ is required to be non-random because we are dropping the inner product terms of the left hand side of \eqref{eq:int_rel153}.} $z \in \braces*{(x,y): x \in X , y \ge \zero}$,
	\begin{align}
	\Ebb \bracket{\tsum_{t = 0}^{T-1}\gamma_tQ(z_{t+1}, z)} &\le \tfrac{\gamma_0\tau_0}{2} \gnorm{y- y_0}{2}{2} +  \gamma_0\eta_0 W(x, x_0) +\paran[\big]{\tsum_{t=1}^{T-1}\tfrac{12\gamma_{t}\theta_t^2}{\tau_t}
		+ \tfrac{12\gamma_{T-1}}{\tau_{T-1}} }(\sigma_{f}^2+D_X^2\gnorm{\sigma}{2}{2}) \nonumber\\
	&+ \tsum_{ t =0}^{T-1} \tfrac{2\gamma_t}{\eta_t- L_0-BL_f} \bracket[\big]{ \Ebb[\gnorm{\delta^G_t}{\ast}{2}]+ (H_0+\gnorm{y}{2}{}H_f + \tfrac{L_fD_X\bracket{\gnorm{y}{2}{}-B}_+}{2})^2 } \nonumber\\
	& -\gamma_{T-1}(\eta_{T-1}+\alpha_{0, T-1})\Ebb[W(x, x_T)], 
	\label{eq:int_rel80}
	\end{align}
	where we dropped $\gnorm{y-y_T}{2}{2}$.
	Using the convexity of $\psi_0(\cdot)$ and $\psi(\cdot)$, and noting the definition of $\Gamma_T$, we have for all non-random $y \ge \zero$ and $x \in X$,
	\begin{equation}\label{eq:int_rel113}
	\Gamma_T\Ebb\big[ \psi_0(\wb{x}_T)  + \inprod{y}{\psi(\wb{x}_T)} - \psi_0(x) - \inprod{\wb{y}_T}{\psi(x)} \big]\le \Ebb \bracket{\tsum_{t = 0}^{T-1}\gamma_tQ(z_{t+1}, z)}.
	\end{equation}
	Combining \eqref{eq:int_rel80} and \eqref{eq:int_rel113}, then choosing $x = x^*, y = \zero$ (which are non-random) throughout the combined relation, observing that $[0-B]_+ = 0$ for any $B \ge 0$, ignoring $W(x, x_T)$ term and noting that $\psi(x^*) \le \zero$ and $\wb{y}_T \ge \zero$ implies $\inprod{\wb{y}_T}{\psi(x^*)} \le 0$, we have \eqref{eq:bounding_optimality}.
	
	Now, we prove a bound on $\Ebb[W(x^*,x_T)]$. 
	Put $z = z^*: = (x^*, y^*)$ in \eqref{eq:int_rel80}. Then 
	we have that $Q(z_{t+1}, z^*) \ge 0$ for all $t = 0,\dots, T-1$. Hence, using $z = z^*$ in \eqref{eq:int_rel80}, dropping summation of $Q$-terms and taking expectation on both sides, we obtain \eqref{eq:bouding_last_iterate}.
	
	Now, we focus our attention to the infeasibility bound. Let us define $R := \gnorm{y^*}{2}{}+1$ and an auxiliary sequence $\{y^v_t\}$ in the following way:  {$y^v_1 = y_1$} and for all {$t \ge 1$}, define
	\[y^v_{t+1} := \argmin_{y \in \BB_+^2(R)} \tfrac{1}{\tau_{t-1}} \inprod{\delta_t^F}{y} + \tfrac{1}{2} \gnorm{y-y^v_t}{2}{2},\]
	where we recall that $\BB_+^2(R) = \{x \in \Rbb^n: \gnorm{x}{2}{} \le R, x\ge  \zero\}$. Then in view of Lemma \ref{lem:tech_res1}, in particular relation \eqref{eq:int_rel114_1}, for all $y \in \BB_+^2(R)$ we have
	\begin{equation}\label{eq:int_rel115}
	\tfrac{1}{\tau_t}\inprod{\delta^F_{t+1}} {y^v_{t+1} -y} \le \tfrac{1}{2}\gnorm{y-y^v_{t+1}}{2}{2} -  \tfrac{1}{2}\gnorm{y-y^v_{t+2}}{2}{2} + \tfrac{1}{2\tau_t^2} \gnorm{\delta_{t+1}^F}{2}{2}.
	\end{equation}
	Multiplying \eqref{eq:int_rel115} by $\gamma_t\tau_t$, taking a sum from $t = 0$ to $T-1$ and noting \eqref{eq:int_rel154-b}, we obtain
	\begin{equation}\label{eq:int_rel117}
	\tsum_{t =0}^{T-1} \gamma_t \inprod{\delta^F_{t+1}} {y^v_{t+1} -y} \le \tfrac{\gamma_0\tau_0}{2}\gnorm{y-y^v_1}{2}{2} + \tsum_{ t =0}^{T-1} \tfrac{\gamma_t}{2\tau_t} \gnorm{\delta_{t+1}^F}{2}{2},
	\end{equation}
	for all $y \in \BB_+^2(R)$.
	Replacing $i$ and $t$ in \eqref{eq:int_rel153} by $t$ and $T-1$ and summing with \eqref{eq:int_rel117}, we obtain
	\begin{align}
	&\tsum_{t = 0}^{T-1}\gamma_tQ(z_{t+1}, z) +\tsum_{t=0}^{T-1}\gamma_t[ \inprod{\delta^G_t}{x_t-x}-\inprod{\delta_{t+1}^F}{y_{t+1}-y_{t+1}^v}] \nonumber \\
	&\le \tfrac{\gamma_0\tau_0}{2} [\gnorm{y-y_0}{2}{2} + \gnorm{y-y^v_1}{2}{2}] + \gamma_0\eta_0 W(x,x_0)+\tsum_{t =1}^{T-1} \tfrac{3\gamma_{t}\theta_t^2}{2\tau_t}\gnorm{q_t-\wb{q}_t}{2}{2} + \tfrac{3\gamma_{T-1}}{2\tau_{T-1}}\gnorm{q_T-\wb{q}_T}{2}{2}\nonumber \\
	&\quad+ \tsum_{ t =0}^{T-1}\bracket[\big]{\tfrac{2\gamma_t}{\eta_t-L_{0}- BL_f}\braces{ \gnorm{\delta^G_t}{*}{2} + (H_0+\gnorm{y}{2}{}H_f + \tfrac{L_fD_X[\gnorm{y}{2}{}-B]_+}{2})^2 }+ \tfrac{\gamma_t}{2\tau_t} \gnorm{\delta_{t+1}^F}{2}{2}}, \label{eq:int_rel119}
	\end{align} 
	for all $z \in \braces*{(x,y): x \in X , y \in \BB_+^2(R)}$.
	Note that given $\xi_{[t]}$ and $\wb{\xi}_{[t-1]}$, we have $y_{t+1}, y^v_{t+1}, x_{t+1}$ and $x_{t}$ are constants. Hence we have 
	\begin{equation}\label{eq:int_rel118}
	\Ebb[\inprod{\delta_{t+1}^F}{y_{t+1}-y_{t+1}^v}] = \Ebb[ \inprod{\Ebb_{\vert\xi_{[t]},\wb{\xi}_{[t-1]}}[\delta_{t+1}^F]}{y_{t+1}-y_{t+1}^v} ] = 0,
	\end{equation}
	where second equality follows from \eqref{eq:int_rel161}.
	Choosing $z = \wh{z} :=(x^*, \wh{y})$ in \eqref{eq:int_rel119} where $\wh{y} := (\gnorm{y^*}{2}{}+1) \relu{\psi(\wb{x}_T)}{\gnorm{\relu{\psi(\wb{x}_T)}}{2}{} }^{-1} \in \BB_+^2(R)$
	, taking expectation on both sides and noting \eqref{eq:int_rel118}, \eqref{eq:int_rel160}, \eqref{eq:int_rel100}, first relation in \eqref{eq:int_rel157}, we have
	\begin{align}
	\Ebb[\tsum_{ t =0}^{T-1}\gamma_tQ(z_{t+1}, \wh{z})] &\le \tfrac{\gamma_0\tau_0}{2}\Ebb[\gnorm{\wh{y}-y_0}{2}{2} + \gnorm{\wh{y}-y^v_1}{2}{2}] + \gamma_0\eta_0W(x^*,x_0) \nonumber \\ 
	&+\tsum_{t =0}^{T-1}\tfrac{2\gamma_t}{\eta_t-L_0-BL_f} \braces[\big]{\Ebb[\gnorm{\delta^G_t}{*}{2}] + \big( H_0+(\gnorm{y^*}{2}{}+1)H_f + \tfrac{L_fD_X[\gnorm{y^*}{2}{}+1-B]_+}{2} \big)^2 }   \nonumber\\
	&+ \paran[\big]{\tsum_{ t =1}^{T-1}	\tfrac{12\gamma_t\theta_t^2}{\tau_t}+\tsum_{t=0}^{T-1} \tfrac{\gamma_t}{\tau_t}+ \tfrac{12\gamma_{T-1}}{\tau_{T-1}} }(\sigma_{f}^2 + D_X^2\gnorm{\sigma}{2}{2}).\label{eq:int_rel162}
	\end{align}
	Noting the convexity of $Q$ in the first argument, we obtain
	\begin{equation}\label{eq:int_rel121}
	\Ebb[Q(\wb{z}_T, \wh{z})] \le \tfrac{1}{\Gamma_T}\Ebb[\tsum_{ t =0}^{T-1}\gamma_tQ(z_{t+1}, \wh{z})].
	\end{equation}
	Now observe that 
	\begin{align*}
	\LL(\wb{x}_T, y^*) - \LL(x^*, y^*) \ge 0\\
	\Rightarrow \psi_0(\wb{x}_T)+ \inprod{y^*}{\psi(\wb{x}_T)} -\psi_0(x^*) \ge 0,
	\end{align*}
	which in view of the relation
	\[\inprod{y^*}{\psi(\wb{x}_T)} \le \inprod{y^*}{\relu{\psi(\wb{x}_T)}} \le \gnorm{y^*}{2}{} \gnorm{\relu{\psi(\wb{x}_T)}}{2}{}, \]
	implies that 
	\begin{equation} \label{eq:int_rel120}
	\psi_0(\wb{x}_T) + \gnorm{y^*}{2}{} \gnorm{\relu{\psi(\wb{x}_T)}}{2}{} -\psi_0(x^*) \ge 0.
	\end{equation}
	Moreover,
	\begin{align*}
	Q(\wb{z}_T, \wh{z}) &= \LL(\wb{x}_T, \wh{y}) - \LL(x^*, \wb{y}_T) \ge \LL(\wb{x}_T, \wh{y}) - \LL(x^*, y^*)= \psi_0(\wb{x}_T) + (\gnorm{y^*}{2}{}+1) \gnorm{\relu{\psi(\wb{x}_T)}}{2}{} - \psi_0(x^*),
	\end{align*}
	along with \eqref{eq:int_rel120} implies that 
	\[Q(\wb{z}_T, \wh{z}) \ge \gnorm{\relu{\psi(\wb{x}_T)}}{2}{}.\]
	The above relation, \eqref{eq:int_rel121} and \eqref{eq:int_rel162} together yield
	\begin{equation*}
	\begin{split}
	\Ebb[\gnorm{\relu{\psi(\wb{x}_T)}}{2}{}] &\le \tfrac{1}{\Gamma_T} \Big[ \tfrac{\gamma_0\tau_0}{2}\Ebb[\gnorm{\wh{y}-y_0}{2}{2} + \gnorm{\wh{y}-y^v_1}{2}{2}] + \gamma_0\eta_0W(x^*,x_0) \\ 
	&\quad+\tsum_{t =0}^{T-1}\tfrac{2\gamma_t}{\eta_t-L_0-BL_f} \braces[\big]{\Ebb[\gnorm{\delta^G_t}{*}{2}] + \big(H_0+(\gnorm{y^*}{2}{}+1)H_f + \tfrac{L_fD_X[\gnorm{y^*}{2}{}+1-B]_+}{2} \big)^2} \\
	&\quad+\paran[\big]{\tsum_{ t =1}^{T-1}	\tfrac{12\gamma_t\theta_t^2}{\tau_t}+\tsum_{t=0}^{T-1} \tfrac{\gamma_t}{\tau_t}+ \tfrac{12\gamma_{T-1}}{\tau_{T-1}} }(\sigma_{f}^2 + D_X^2\gnorm{\sigma}{2}{2})\Big].
	\end{split}
	\end{equation*}
	Noting the bound $\gnorm{\wh{y} - y^v_1}{2}{} \le 2R$ and $\gnorm{\wh{y}-y_0}{2}{2} \le 2\gnorm{y_0}{2}{2} + 2\gnorm{\wh{y}}{2}{2} \le \gnorm{y_0}{2}{2} + 2R^2$ in the above relation and recalling that $R = \gnorm{y^*}{2}{}+1$, we obtain \eqref{eq:bounding_infeasibility}.
	Hence we conclude the proof.
\end{proofx}
Note that we still need to bound $\Ebb[\gnorm{\delta^G_t}{\ast}{2}]$. Below, we provide a simple lemma which is used to show such a bound.
\begin{lemma}\label{prop:recurrence_bound}
	Let $\{a_t\}_{t\ge 0}$ be a nonnegative sequence, $m_1,m_2 \ge 0$ be constants such that $a_0 \le m_1$ and the following relation holds for all $t \ge 1$:
	\[a_t \le m_1  + m_2\tsum_{ k =0}^{t-1}a_k.\]
	Then we have $a_t \le m_1 (1+m_2)^t$.
\end{lemma}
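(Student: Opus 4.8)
The plan is to prove the bound by induction on $t$. The base case $t=0$ is immediate from the hypothesis, since $a_0 \le m_1 = m_1(1+m_2)^0$. For the inductive step, I would assume that $a_k \le m_1(1+m_2)^k$ holds for every $k \le t-1$ and substitute these bounds into the given recursion $a_t \le m_1 + m_2\sum_{k=0}^{t-1}a_k$, obtaining $a_t \le m_1 + m_1 m_2\sum_{k=0}^{t-1}(1+m_2)^k$.

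This reduces the whole step to evaluating the partial geometric sum $\sum_{k=0}^{t-1}(1+m_2)^k$. When $m_2>0$ this equals $\frac{(1+m_2)^t-1}{m_2}$, so $a_t \le m_1 + m_1 m_2\cdot\frac{(1+m_2)^t-1}{m_2} = m_1(1+m_2)^t$, which closes the induction. The degenerate case $m_2=0$ is handled separately (or by a continuity/limiting remark): there the recursion already gives $a_t \le m_1 = m_1(1+m_2)^t$. An equally clean alternative would be to observe that the comparison sequence $b_t := m_1(1+m_2)^t$ satisfies the recursion with equality, $b_t = m_1 + m_2\sum_{k=0}^{t-1}b_k$, and then show $a_t\le b_t$ by induction; I would present whichever phrasing reads best in context.

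There is essentially no obstacle here: this is the standard discrete Gr\"onwall (or Bellman) inequality, and the only point needing a line of care is the closed form for the finite geometric sum together with the trivial bookkeeping for $m_2 = 0$. The lemma is then invoked downstream with $a_t = \Ebb[\gnorm{\delta^G_t}{\ast}{2}]$ (or a closely related quantity) to convert the self-referential estimate on the stochastic error into the explicit exponential-in-$t$ bound that feeds the definition of $\zeta$ in Theorem~\ref{cor:step_size_strong_cvx}.
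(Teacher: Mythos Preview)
Your proof is correct and follows essentially the same induction-plus-geometric-sum argument as the paper's own proof; the only difference is that you explicitly single out the $m_2=0$ case, which the paper glosses over when writing $\tfrac{(1+m_2)^{t+1}-1}{m_2}$. Your remark on the downstream use with $a_t=\Ebb[\gnorm{\delta^G_t}{\ast}{2}]$ is also accurate.
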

\begin{proofx}
	We prove this lemma by induction. Clearly, it is true for $t = 0$. Suppose it is true for $a_t$. Then, using inductive hypothesis on $a_k$ for $k= 0, \dots, t$, we have 
	\begin{align*}
	a_{t+1} &\le m_1+ m_2\tsum_{ k =0}^{t}a_t\\
	&\le m_1 \bracket[\big]{1+m_2\tsum_{ k =0}^{t}(1+m_2)^k}\\
	&\le m_1\bracket[\big]{1+m_2\tfrac{(1+m_2)^{t+1}-1}{m_2}} = m_1(1+m_2)^{t+1}.
	\end{align*}
	Hence, we conclude the proof.
\end{proofx}
Now, under some assumptions, we show a bound on $\Ebb[\gnorm{\delta^G_{t}}{\ast}{2}]$.
\begin{lemma}\label{lem:bound_zeta}
	Assume that $\{\gamma_t, \tau_t, \eta_t\}$ satisfy 
	\begin{equation}\label{eq:int_rel159}
	\tfrac{96\gnorm{\sigma}{2}{2}}{\tau_t(\eta_t-L_0-BL_f)} < 1 ,
	\end{equation} for all $t \le T-1$ and there exist constants $R_1$ and $R_2$ satisfying
	\begin{align}
	R_1 \ge \paran[\Big]{1- 	\tfrac{96\gnorm{\sigma}{2}{2}}{\tau_t(\eta_t-L_0-BL_f)} }^{-1}&\Big[ 2\sigma_0^2 +\tfrac{48\gnorm{\sigma}{2}{2}}{\gamma_t\tau_t}\Big\{ \gamma_0\eta_0W(x^*,x_0) + \tfrac{\gamma_0\tau_0}{2}\gnorm{y^*-y_0}{2}{2}+\tfrac{\gamma_t\tau_t}{12}\gnorm{y*}{2}{2}  \nonumber\\
	&+\tsum_{ i =0}^{t}\tfrac{2\gamma_i}{\eta_i-L_0-BL_f} \big( H_0 + H_f\gnorm{y^*}{2}{}+\tfrac{L_fD_X[\gnorm{y^*}{2}{}-B]_+}{2} \big)^2 \nonumber \\
	&+ \paran[\big]{\tsum_{ i =1}^{t}\tfrac{12\gamma_i\theta_i^2}{\tau_i} + \tfrac{12\gamma_t}{\tau_t}} (\sigma_{f}^2+D_X^2\gnorm{\sigma}{2}{2}) \Big\} \Big] \label{eq:R1},
	\end{align} for all $t \le T-1$ and
	\begin{equation}\label{eq:R2}
	R_2 \ge \paran[\Big]{1- 	\tfrac{96\gnorm{\sigma}{2}{2}}{\tau_t(\eta_t-L_0-BL_f)} }^{-1}\tfrac{96\gnorm{\sigma}{2}{2}\gamma_i}{\gamma_t\tau_t(\eta_i-L_0-BL_f)},
	\end{equation}for all $t \le T-1$ and $i \le t-1$.
	Then, we have
	\begin{equation}\label{eq:bound_delta_G_t}
	\Ebb[\gnorm{\delta_t^G}{\ast}{2}] \le R_1(1+R_2)^t,
	\end{equation}
	for all $t \le T-1$.
	In particular, if $\gnorm{\sigma}{2}{} = 0$, then we can set $R_1 = 2\sigma_0^2$ and $R_2 = 0$ implying $\Ebb[\gnorm{\delta^G_t}{\ast}{2}] \le 2\sigma_0^2$.
\end{lemma}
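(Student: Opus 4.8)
The plan is to set $a_t := \Ebb[\gnorm{\delta_t^G}{\ast}{2}]$, derive a self-referential estimate of the form $a_t \le R_1 + R_2\tsum_{k=0}^{t-1}a_k$ valid for $t\ge 1$ together with $a_0\le R_1$, and then conclude by the elementary recursion in Lemma~\ref{prop:recurrence_bound}. The circularity in this estimate — the bound on $\Ebb[\gnorm{y^*-y_{t+1}}{2}{2}]$ that I will use already contains $a_0,\dots,a_t$, including the ``diagonal'' term $a_t$ — is the main point to handle: it is precisely where the smallness hypothesis \eqref{eq:int_rel159} enters, to absorb the diagonal term into the left-hand side.

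\emph{Step 1 (pointwise bound on $\delta_t^G$).} Writing $\delta_t^G = \big(G_0(x_t,\xi_t)-\subgrad{ f_0}(x_t)\big) + \tsum_{i\in[m]}\big(G_i(x_t,\xi_t)-\subgrad{ f_i}(x_t)\big)y_{t+1}^\br{i}$, the triangle inequality together with Cauchy--Schwarz gives $\gnorm{\delta_t^G}{\ast}{2}\le 2\gnorm{G_0(x_t,\xi_t)-\subgrad{ f_0}(x_t)}{\ast}{2} + 2\gnorm{y_{t+1}}{2}{2}\tsum_{i\in[m]}\gnorm{G_i(x_t,\xi_t)-\subgrad{ f_i}(x_t)}{\ast}{2}$. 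Conditioning on $\xi_{[t-1]},\wb{\xi}_{[t-1]}$ — under which $x_t$ and $y_{t+1}$ are deterministic while $\xi_t$ is fresh and independent of $x_t$ — and applying the variance bounds in \eqref{eq:stochastic_oracle} yields $a_t\le 2\sigma_0^2 + 2\gnorm{\sigma}{2}{2}\Ebb[\gnorm{y_{t+1}}{2}{2}]$. Since $\gnorm{y_{t+1}}{2}{2}\le 2\gnorm{y^*}{2}{2}+2\gnorm{y^*-y_{t+1}}{2}{2}$, this becomes
\[
a_t \le 2\sigma_0^2 + 4\gnorm{\sigma}{2}{2}\gnorm{y^*}{2}{2} + 4\gnorm{\sigma}{2}{2}\Ebb[\gnorm{y^*-y_{t+1}}{2}{2}].
\]

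\emph{Step 2 (bounding $\Ebb[\gnorm{y^*-y_{t+1}}{2}{2}]$).} I would invoke Lemma~\ref{thm:linearized_conex_1} with horizon $t+1$ in place of $T$ and with $z=z^*:=(x^*,y^*)$. Since $(x^*,y^*)$ is a saddle point, $Q(z_{k+1},z^*)\ge 0$ for all $k$, and $W(x^*,x_{t+1})\ge 0$; taking expectation annihilates the $\inprod{\delta_t^G}{\cdot}$ and $\inprod{\delta_{t+1}^F}{\cdot}$ terms via the unbiasedness identities \eqref{eq:int_rel157}--\eqref{eq:int_rel158}, while \eqref{eq:int_rel100} bounds each $\Ebb[\gnorm{q_k-\wb{q}_k}{2}{2}]\le 8(\sigma_f^2+D_X^2\gnorm{\sigma}{2}{2})$. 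Moving the $-\tfrac{\gamma_t\tau_t}{12}\gnorm{y^*-y_{t+1}}{2}{2}$ term of \eqref{eq:int_rel153} to the left then gives
\begin{align*}
\tfrac{\gamma_t\tau_t}{12}\Ebb[\gnorm{y^*-y_{t+1}}{2}{2}]
&\le \gamma_0\eta_0 W(x^*,x_0) + \tfrac{\gamma_0\tau_0}{2}\gnorm{y^*-y_0}{2}{2} + \Big(\tsum_{k=1}^{t}\tfrac{12\gamma_k\theta_k^2}{\tau_k}+\tfrac{12\gamma_t}{\tau_t}\Big)(\sigma_f^2+D_X^2\gnorm{\sigma}{2}{2}) \\
&\quad + \tsum_{k=0}^{t}\tfrac{2\gamma_k}{\eta_k-L_0-BL_f}\Big[a_k + \big(H_0+H_f\gnorm{y^*}{2}{}+\tfrac{L_fD_X}{2}\relu{\gnorm{y^*}{2}{}-B}\big)^2\Big].
\end{align*}

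\emph{Step 3 (closing the recursion and the degenerate case).} Substituting the Step~2 bound into Step~1 and using the identity $4\gnorm{\sigma}{2}{2}\gnorm{y^*}{2}{2}=\tfrac{48\gnorm{\sigma}{2}{2}}{\gamma_t\tau_t}\cdot\tfrac{\gamma_t\tau_t}{12}\gnorm{y^*}{2}{2}$ to fold $\gnorm{y^*}{2}{2}$ into the bracket, I would peel off the $k=t$ term of the $a_k$-sum, whose coefficient equals exactly $\tfrac{96\gnorm{\sigma}{2}{2}}{\tau_t(\eta_t-L_0-BL_f)}$; moving it to the left and dividing using \eqref{eq:int_rel159}, the constant part of the resulting right-hand side is precisely the bracketed quantity in \eqref{eq:R1}, and the coefficient of each remaining $a_k$ ($k\le t-1$) is bounded by \eqref{eq:R2}. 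Thus $a_t\le R_1 + R_2\tsum_{k=0}^{t-1}a_k$ for $t\ge 1$, and the $t=0$ instance (empty sum) gives $a_0\le R_1$; Lemma~\ref{prop:recurrence_bound} then delivers $a_t\le R_1(1+R_2)^t$, i.e.\ \eqref{eq:bound_delta_G_t}. When $\gnorm{\sigma}{2}{}=0$, Step~1 already gives $a_t\le 2\sigma_0^2$, matching the choice $R_1=2\sigma_0^2$, $R_2=0$. The hard part is purely bookkeeping: isolating the diagonal term, using \eqref{eq:int_rel159} to absorb it, and repackaging all remaining constants and coefficients into exactly the prescribed forms \eqref{eq:R1}--\eqref{eq:R2}; a secondary point is justifying the conditioning/measurability used when taking expectations and the legitimacy of applying \eqref{eq:int_rel153} at the intermediate horizon $t+1$.
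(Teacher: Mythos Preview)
Your proposal is correct and follows essentially the same approach as the paper: decompose $\Ebb[\gnorm{\delta_t^G}{\ast}{2}]$ to reduce to bounding $\Ebb[\gnorm{y^*-y_{t+1}}{2}{2}]$, obtain the latter by specializing \eqref{eq:int_rel153} at horizon $t+1$ with $z=z^*$, absorb the diagonal $a_t$ term via \eqref{eq:int_rel159}, and conclude by Lemma~\ref{prop:recurrence_bound}. The only cosmetic difference is order of presentation (the paper derives the $\gnorm{y^*-y_{t+1}}{2}{2}$ bound first and the $\delta_t^G$ decomposition second), and your explicit identity for folding in $\gnorm{y^*}{2}{2}$ is exactly the paper's ``adding $\tfrac{\gamma_t\tau_t}{12}\gnorm{y^*}{2}{2}$ to both sides and multiplying by $\tfrac{48\gnorm{\sigma}{2}{2}}{\gamma_t\tau_t}$.''
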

\begin{proofx}
	Observe that $Q(z_{t+1}, z^*) \ge 0$ for all $t = 0, \dots, T-1$ where $z^* = (x^*, y^*)$. 	Choosing $z = z^* $ in \eqref{eq:int_rel153} and 
	taking expectation on both sides, using \eqref{eq:int_rel157} with $x= x^*$ and \eqref{eq:int_rel158} with $y = y^*$ and noting \eqref{eq:int_rel100}, we have
	\begin{align}
	&\tfrac{\gamma_t\tau_t}{12}\Ebb \gnorm{y^*-y_{t+1}}{2}{2} \le \gamma_0\eta_0W(x^*,x_0) + \tfrac{\gamma_0\tau_0}{2}\gnorm{y^*-y_0}{2}{2} 
	\nonumber \\
	&\quad+\tsum_{ i =0}^{t} \tfrac{2\gamma_i}{\eta_i-L_0-BL_f}\bracket[\big]{\Ebb\gnorm{\delta_i^G}{*}{2} +(H_0 + H_f\gnorm{y^*}{2}{}+\tfrac{L_fD_X[\gnorm{y^*}{2}{}-B]_+}{2})^2} \nonumber\\
	& \quad+ \paran[\big]{\tsum_{ i =1}^{t}\tfrac{12\gamma_i\theta_i^2}{\tau_i}+ \tfrac{12\gamma_t}{\tau_t}}(\sigma_{f}^2+D_X^2\gnorm{\sigma}{2}{2}). \label{eq:int_rel156}
	\end{align}
	Now, let us define $\delta^G_{t,i} := G_i(x_t, \xi_t) -f'_i(x_t)$ for $ i = 0, \dots,m$. As a consequence, we have $\delta^G_t = \delta^G_{t,0} + \tsum_{ i =1}^m y^\br{i}_{t+1}\delta^G_{t,i}$. Then, we have 
	\begin{align}
	\Ebb[\gnorm{\delta^G_t}{\ast}{2} ] &=\Ebb[ \gnorm{\delta^G_{t,0}+\tsum_{ i =1}^m y^\br{i}_{t+1} \delta^G_{t,i}}{\ast}{2} ] \nonumber \\[-1.5mm]
	&\mleq{(i)} 2\Ebb\bracket{\gnorm{\delta^G_{t,0} }{\ast}{2} } + 2\Ebb\bracket[\big]{ \gnorm{\tsum_{ i =1}^my^\br{i}_{t+1}\delta^G_{t,i}}{\ast}{2} } \nonumber \\
	&\le 2\Ebb\bracket{\gnorm{\delta^G_{t,0} }{\ast}{2} } + 2\Ebb\bracket[\big]{ \paran{\tsum_{ i =1}^m\gnorm{y^\br{i}_{t+1}\delta^G_{t,i}}{\ast}{} }^2 } \nonumber \\[-1.5mm]
	&\mleq{{(ii)}} 2 \braces[\big]{\sigma_0^2 +  \Ebb\big[ \gnorm{y_{t+1}}{2}{2} \paran[\big]{\tsum_{i =1}^m\gnorm{\delta^G_{t,i}}{\ast}{2} }  \big] } \nonumber \\[-1.5mm]
	&\mleq{(iii)} 2 \braces[\big]{\sigma_0^2 + \Ebb\bracket[\big]{\gnorm{y_{t+1}}{2}{2} \paran[\big]{\tsum_{ i =1}^m\Ebb_{\vert\xi_{[t-1]}, \wb{\xi}_{[t-1]}}[\gnorm{\delta^G_{t,i}}{\ast}{2}]} } }\nonumber\\[-1.5mm]
	&\mleq{(iv)}  2\braces[\big]{\sigma_0^2 + \Ebb\bracket{\gnorm{y_{t+1}}{2}{2}\tsum_{i=1}^m\sigma_i^2} }\nonumber\\ 
	&= 2(\sigma_0^2 + \gnorm{\sigma}{2}{2}\Ebb\gnorm{y_{t+1}}{2}{2} )\nonumber\\
	&\le 2\sigma_0^2 + 4\gnorm{\sigma}{2}{2}\paran[\big]{\gnorm{y^*}{2}{2} + \Ebb\gnorm{y_{t+1}-y^*}{2}{2} }.\label{eq:int_rel167}	
	\end{align}
	Here, relation (i) follows from the fact that $\gnorm{a+b}{\ast}{2} \le (\gnorm{a}{\ast}{}+ \gnorm{b}{\ast}{})^2 \le 2\gnorm{a}{\ast}{2} + 2\gnorm{b}{\ast}{2}$, relation (ii) follows from Cauchy-Schwarz inequality, relation (iii) follows from the fact that $y_{t+1}$, conditioned on random variables $\xi_{[t-1]}, \wb{\xi}_{[t-1]}$, is a constant and relation (iv) follows from fourth and fifth relation in \eqref{eq:stochastic_oracle} and the fact that $x_t$ is a constant when conditioned on random variables $\xi_{[t-1]}, \wb{\xi}_{[t-1]}$.\\
	Adding $\tfrac{\gamma_t\tau_t}{12}\gnorm{y^*}{2}{2}$ to both sides of \eqref{eq:int_rel156}, then multiplying the resulting relation by $\tfrac{48\gnorm{\sigma}{2}{2}}{\gamma_t\tau_t}$ and observing \eqref{eq:int_rel167}, we have 
	\begin{align*}
	\Ebb[\gnorm{\delta_t^G}{\ast}{2}]&\le 2\sigma_0^2 +\tfrac{48\gnorm{\sigma}{2}{2}}{\gamma_t\tau_t}\Big\{\gamma_0\eta_0W(x^*,x_0) + \tfrac{\gamma_0\tau_0}{2}\gnorm{y^*-y_0}{2}{2} + \tfrac{\gamma_t\tau_t}{12}\gnorm{y^*}{2}{2}\\
	&+\tsum_{ i =0}^{t}\tfrac{2\gamma_i}{\eta_i-L_0-BL_f}(H_0 + H_f\gnorm{y^*}{2}{}+\tfrac{L_fD_X[\gnorm{y^*}{2}{}-B]_+}{2})^2 \nonumber\\
	& + \paran[\big]{\tsum_{ i =1}^{t}\tfrac{12\gamma_i\theta_i^2}{\tau_i} + \tfrac{12\gamma_t}{\tau_t}} (\sigma_{f}^2+D_X^2\gnorm{\sigma}{2}{2})  \Big\} + \tsum_{ i =0}^{t}\tfrac{96\gnorm{\sigma}{2}{2}\gamma_i}{\gamma_t\tau_t(\eta_i-L_0-BL_f)}\Ebb\gnorm{\delta_i^G}{*}{2}.
	\end{align*}
	In view of \eqref{eq:int_rel159}, we have that the coefficient of the $\delta_t^G$ term on the right hand side of the above relation is strictly less than $1$. Moving the $\delta_t^G$ term to the left hand side and noting the conditions imposed on constants $R_1, R_2$, we have
	\begin{equation*}
	\Ebb[\gnorm{\delta_t^G}{\ast}{2}] \le R_1+R_2\tsum_{ i =0}^{t-1}\Ebb[\gnorm{\delta_i^G}{\ast}{2}],
	\end{equation*}
	for all $t \le T-1$.
	Using Lemma \ref{prop:recurrence_bound} for the above relation, we have \eqref{eq:bound_delta_G_t}. Hence we conclude the proof.
\end{proofx}
\begin{remark}
	Note that the bound in \eqref{eq:bound_delta_G_t} is still a function of stepsize parameters since $R_1$ and $R_2$ need to satisfy relations \eqref{eq:R1} and \eqref{eq:R2}, respectively. Now, we need to show that there exists a possible selection of stepsize parameters for which we can compute a uniform upper bound on $\Ebb[\gnorm{\delta^G_t}{\ast}{2}]$ for all $t \le T-1$, in particular, we can obtain constants $R_1$ and $R_2$ satisfying \eqref{eq:R1} and \eqref{eq:R2}, respectively. Moreover, the selected stepsize policy is meaningful in the sense that it yields convergence according \eqref{eq:bounding_optimality} and \eqref{eq:bounding_infeasibility}. Below, we show that the stepsize policy in \eqref{eq:step_size} of Theorem \ref{cor:step_size_strong_cvx} and in \eqref{eq:step_size_conv} of Theorem \ref{thm:convergence_convex} is specified in a way such that \eqref{eq:int_rel154}, \eqref{eq:int_rel155-1}, \eqref{eq:int_rel155-2} and \eqref{eq:int_rel159} are satisfied. Moreover, a uniform upper bound according to \eqref{eq:bound_delta_G_t} for all $t \le T-1$ can be obtained and it also leads to the convergence according to \eqref{eq:bounding_optimality} and \eqref{eq:bounding_infeasibility}. In particular, we show the proofs of Theorem \ref{cor:step_size_strong_cvx} and Theorem \ref{thm:convergence_convex} below.
\end{remark}

First, we focus on the setting in which \eqref{main-prob} is strongly convex, i.e., $\alpha_{0} > 0$
and show the proof of Theorem \ref{cor:step_size_strong_cvx}.

\vspace{0.1in}

\begin{proof*}{Theorem \ref{cor:step_size_strong_cvx}}
	Note that $\{\gamma_t, \theta_t, \eta_t, \tau_t\}$ set according to \eqref{eq:step_size} satisfy \eqref{eq:int_rel154}. It is easy to verify 
	\eqref{eq:int_rel154-a} and \eqref{eq:int_rel154-b}. To verify \eqref{eq:int_rel154-c}, note that
	\begin{align*}
	\gamma_{t-1}(\eta_{t-1}+\alpha_{0,t-1}) &\ge \gamma_{t-1}(\eta_{t-1}+\alpha_{0}) \\
	&=(t+t_0+1)\paran[\big]{ \tfrac{\alpha_{0}(t+t_0)}{2} + \alpha_{0} } = \tfrac{\alpha_{0}}{2}(t+t_0+1)(t+t_0+2)
	= \gamma_t\eta_t.
	\end{align*}
	Note that {all relations in \eqref{eq:int_rel155-1} and \eqref{eq:int_rel155-2} are} satisfied if $\tfrac{4}{3}\MM^2 \le \tfrac{\tau_t(\eta_{t-2}-L_0-BL_f)}{12}$. This follows from the fact that $\{\eta_t\}$ is an increasing sequence, { $\{\tau_t\}$ is a deacreasing sequence,} $\tfrac{3}{4} \le \theta_t < 1$ and the definition of $\MM$. Indeed we have,
	\begin{align*}
	\tfrac{\tau_t(\eta_{t-2}-L_0-BL_f)}{12} \ge \tfrac{32\MM^2}{12\alpha_{0}(t+1)}\paran[\big]{\tfrac{\alpha_{0}(t+t_0-1)}{2} -\tfrac{\alpha_{0}(t_0-2)}{4}} =\tfrac{2(2t+t_0)\MM^2}{3(t+1)} \ge  \tfrac{4\MM^2}{3},
	\end{align*}
	where the last inequality follows from $t_0\ge 2$ by definition.
	Also note that
	\begin{align*}
	\tau_t(\eta_t-L_0-BL_f) &\ge \tfrac{384\gnorm{\sigma}{2}{2}T}{\alpha_0(t+1)} \paran[\big]{\tfrac{\alpha_{0}(t+t_0+1)}{2} - \tfrac{\alpha_{0}(t_0-2)}{4}} = \tfrac{96(2t+t_0+4)\gnorm{\sigma}{2}{2}T}{t+1} \ge 192\gnorm{\sigma}{2}{2}
	\end{align*} for all $t \ge 0$.
	Then the above relation implies that
	\begin{equation}\label{eq:int_rel164}
	\tfrac{96\gnorm{\sigma}{2}{2}}{\tau_t(\eta_t-L_0-BL_f)} \le \tfrac{1}{2},
	\end{equation}
	for all $t \ge 0$.
	Finally, we need to show the existence of constants $R_1$ and $R_2$ satisfying \eqref{eq:R1} and \eqref{eq:R2}, respectively.
	Using the fact that $\tau_t \ge \tfrac{384\gnorm{\sigma}{2}{2}T}{\alpha_0(t+1)}$, we observe
	\begin{equation}\label{eq:int_rel165}
	\tfrac{96\gnorm{\sigma}{2}{2}\gamma_i}{\gamma_t\tau_t(\eta_i-L_0-BL_f)} \le \tfrac{384\gnorm{\sigma}{2}{2}(i+t_0+2)}{\alpha_{0}(2i+t_0+4)}\tfrac{\alpha_0(t+1)}{384\gnorm{\sigma}{2}{2}(t+t_0+2)T} \le\tfrac{1}{T},
	\end{equation}
	for all $i \ge 0, t \ge 0$.
	Noting \eqref{eq:int_rel164}, \eqref{eq:int_rel165} and \eqref{eq:R2}, we can set \begin{equation}\label{eq:R2_val}
	R_2 := \tfrac{2}{T}.
	\end{equation} 
	Noting \eqref{eq:R1} along with definition of $\HH_*$ in the theorem statement, setting $y_0 = \zero$, using \eqref{eq:int_rel164},\eqref{eq:int_rel100}, and applying the following relations
	\begin{align*}
	\gamma_t\tau_t &\ge \max\big\{ \tfrac{384\gnorm{\sigma}{2}{2}T}{\alpha_0}, \tfrac{\sigma_{X, f}T^{3/2}}{B(t_0+2)^{1/2}} \big\},\\
	\tsum_{ i =0}^{t}\tfrac{\gamma_i}{\eta_i-L_0-BL_f} &\le \tfrac{4(t+1)}{\alpha_0},\\
	\tsum_{ i =1}^{t} \tfrac{\gamma_i\theta_i^2}{\tau_i} +\tfrac{\gamma_{t}	}{\tau_t} &\le \tfrac{B(t_0+2)^{1/2}}{\sigma_{X, f}T^{3/2}} \bracket[\big]{\tfrac{(t+1)^3}{3} + \tfrac{(t+1)^2(t_0+2)}{2} + \tfrac{(t+1)(9t_0+10)}{6} - (t_0+1) },
	\end{align*}
	we have for all $t \le T-1$, RHS of \eqref{eq:R1} is at most
	\begin{align*}
	2&\Big[2\sigma_0^2 + 48\gnorm{\sigma}{2}{2}\Big\{ \paran[\big]{\tfrac{t_0+2}{2} + \tfrac{1}{12}} \gnorm{y^*}{2}{2}  + \tfrac{8T\HH_*^2}{\alpha_0} \tfrac{T}{T+t_0+1} \tfrac{\alpha_0}{384\gnorm{\sigma}{2}{2}T} \\
	&+ \tfrac{12\sigma_{X,f}^2B(t_0+2)^{1/2}}{\sigma_{X, f}T^{3/2}}  \paran[\Big]{\tfrac{B(t_0+2)^{1/2}}{\sigma_{X, f}T^{3/2}}\tfrac{T^3}{3} + 
	\tfrac{\alpha_0}{384\gnorm{\sigma}{2}{2}T}\paran[\big]{\tfrac{T^2(t_0+2)}{2} + \tfrac{T(9t_0+10)}{6} -(t_0+1)} } \Big\}\Big].
	\end{align*}
	Then, noting $\tfrac{1}{T} \le 1$ and ignoring $-(t_0+1)$ term, we can set 
	\begin{equation}\label{eq:R1_val}
	R_1 := 2\bracket[\Big]{2\sigma_0^2 + 24(t_0+3)\gnorm{\sigma}{2}{2}\gnorm{y^*}{2}{2} + \HH_*^2+  4\times 48(t_0+2)B^2\gnorm{\sigma}{2}{2} + 3\alpha_0B\sigma_{X, f}(t_0+2)^{3/2} }.
	\end{equation}
	Then using Lemma \ref{lem:bound_zeta} and noting \eqref{eq:R2_val}, we have for all $t \le T-1$
	\[\Ebb[\gnorm{\delta_t^G}{\ast}{2}] \le \begin{cases}
	2\sigma_0^2 &\text{if }\gnorm{\sigma}{2}{} = \sigma_{f} =0;\\
	R_1\big(1 + \tfrac{2}{T}\big)^{T-1} \le R_1e^2 &\text{otherwise}.
	\end{cases} .\]
	Noting the above relation, \eqref{eq:R1_val} and the definition of $\zeta$
	, we have 
	\begin{equation}\label{eq:int_rel168}
	\Ebb[\gnorm{\delta^G_t}{\ast}{2}] \le \zeta^2, \tab \forall \ t \le T-1.
	\end{equation} 
	So according to \eqref{eq:bounding_optimality} with $y_0 = \zero$ and using \eqref{eq:int_rel168}, we have
	\begin{equation*}
	\begin{split}
	\Ebb [\psi_0(\wb{x}_T) -\psi_0(x^*)]&\le \tfrac{2}{T(T+2t_0+3)} \big[\tfrac{\alpha_h(t_0+1)(t_0+2)}{2}W(x^*, x_0) + \tfrac{8(\zeta^2+H_0^2)T}{\alpha_0} \\
	&\quad + 12B(t_0+2)^{1/2}\sigma_{X,f} \braces[\big]{\tfrac{T^{1/2}(T+2)}{3} + \tfrac{(t_0+1)T^{-1/2}(T+3)}{2}}
	\big].
	\end{split}
	\end{equation*} 
	Here we used the bound
	\begin{equation}\label{eq:int_rel138}
	\begin{split}
	\tfrac{\gamma_t}{\eta_t-L_{0}-BL_f} &\le \tfrac{4}{\alpha_{0}} \text{ for all }t \ge 0,\\
	\tsum_{ t =1}^{T-1} \tfrac{\gamma_t\theta_t^2}{\tau_t} + \tfrac{\gamma_{T-1}}{\tau_{T-1}} &\le \tfrac{B(t_0+2)^{1/2}}{\sigma_{X,f} T^{3/2}} \bracket[\big]{\tfrac{T^2(T+2)}{3} + (t_0+1)\tfrac{T(T+3)}{2}}.
	\end{split}
	\end{equation}
	Noting the bound on $W(x^*, x_0)$ in the earlier relation, we obtain \eqref{eq:bounding_optimality_cor}.
	Using \eqref{eq:bounding_infeasibility}, \eqref{eq:int_rel168} and the bounds in \eqref{eq:int_rel138}, we have
	\begin{align} 
	\Ebb \gnorm*{\relu{\psi(\wb{x}_T)}}{2}{} &\le \tfrac{2}{T(T+2t_0+3)} \big[ 3(t_0+2)(\gnorm{y^*}{2}{}+1)^2\max\braces[\big]{ \tfrac{32\MM^2}{\alpha_0}, \tfrac{\sigma_{X,f} T^{3/2}}{B(t_0+2)^{1/2}}, \tfrac{384\gnorm{\sigma}{2}{2}T}{\alpha_0} }\nonumber \\
	&+ \tfrac{\alpha_{0}(t_0+1)(t_0+2)}{2}W(x^*,x_0) + 13B(t_0+2)^{1/2}\sigma_{X,f}\braces[\big]{\tfrac{T^{1/2}(T+2)}{3} + \tfrac{(t_0+1)T^{-1/2}(T+3)}{2}}  \nonumber \\
	&+\tfrac{8T}{\alpha_{0}}\braces[\big]{\zeta^2+ \bracket[\big]{H_0+(\gnorm{y^*}{2}{}+1)H_f + \tfrac{L_fD_X[\gnorm{y^*}{2}{}+1-B]_+}{2}}^2} 
	\big].\label{eq:int_rel106}
	\end{align}
	Noting the bound on $W(x^*, x_0)$ in \eqref{eq:int_rel106}, the definition of $\HH_*$, using the fact that $\tfrac{T^{1/2}(T+2)}{3} \le T^{3/2}$ and combining 
	the $T^{3/2}$ order terms, we obtain \eqref{eq:bounding_infeasibility_cor}.
	From \eqref{eq:bouding_last_iterate}, we have 
	\begin{align*}
	\Ebb[W(x_T, x^*)] &\le \tfrac{2}{\alpha_{0}(T+t_0+1)(T+t_0+2)}\big[ \tfrac{(t_0+2)\gnorm{y^*}{2}{2}}{2} \max\braces[\big]{ \tfrac{32\MM^2}{\alpha_0}, \tfrac{\sigma_{X,f} T^{3/2}}{B(t_0+2)^{1/2}}, \tfrac{384\gnorm{\sigma}{2}{2}T}{\alpha_0} } \nonumber\\ 
	&+ \tfrac{\alpha_{0}(t_0+1)(t_0+2)}{2}W(x^*,x_0) + 12B(t_0+2)^{1/2}\sigma_{X,f}\braces[\big]{\tfrac{T^{1/2}(T+2)}{3} + \tfrac{(t_0+1)T^{-1/2}(T+3)}{2}} \nonumber \\
	&+\tfrac{8T}{\alpha_{0}}\braces[\big]{\zeta^2+ \bracket[\big]{H_0+\gnorm{y^*}{2}{}H_f + \tfrac{L_fD_X[\gnorm{y^*}{2}{}-B]_+}{2}}^2} \big].
	\end{align*}
	With similar replacements in the above relation as in \eqref{eq:int_rel106}, we obtain \eqref{eq:bounding_last_iterate_cor}.
	Hence we conclude the proof.
\end{proof*}

Now, we show proof of Theorem \ref{thm:convergence_convex}.

\begin{proof*}{Theorem~\ref{thm:convergence_convex}}
	It is easy to verify that $\{\gamma_{t}, \theta_t, \eta_t, \tau_t\}$ set according to \eqref{eq:step_size_conv} satisfy \eqref{eq:int_rel154} with $\alpha_{0} = 0$.
	Note that {all relations in \eqref{eq:int_rel155-1} and \eqref{eq:int_rel155-2} are }
	satisfied if $\MM^2 \le \tfrac{\tau_t(\eta_{t-2}-L_0-BL_f)}{12}$. This follows due to the fact that $\{\eta_t\}$ is an non-decreasing sequence, {$\{\tau_t\}$ is a non-increasing sequence}, $\theta_t = 1$ for all $t \ge 0$ and the definition of $\MM$.  Then we have
	\begin{equation*}
	\tfrac{\tau_t(\eta_{t-2}-L_0-BL_f)}{12} \ge \tfrac{6\MM B}{D_X} \tfrac{2\MM D_X}{B} \times\tfrac{1}{12} = \MM^2.
	\end{equation*}
	Also, since $(\eta_t-L_0-BL_f) \ge \tfrac{24B\gnorm{\sigma}{2}{} }{D_X}$ and $\tau_t \ge\tfrac{8D_X\gnorm{\sigma}{2}{} }{B}$, we have \begin{equation*}
	\tau_t(\eta_t-L_0-BL_f) \ge 192\gnorm{\sigma}{2}{2}
	\end{equation*} for all $t \ge 0$. In view of the above relation, we have 
	\begin{equation}\label{eq:int_rel166}
	\tfrac{96\gnorm{\sigma}{2}{2}}{\tau_t(\eta_t-L_0-BL_f)} \le \tfrac{1}{2},
	\end{equation}
	hence \eqref{eq:int_rel159} is satisfied.
	We also need to show the existence of $R_1$ and $R_2$ satisfying \eqref{eq:R1} and \eqref{eq:R2}, respectively.
	Using the fact that $\gamma_t, \eta_t$ and $\tau_t$ are constants for all $t \ge 0$, $\tau\eta \ge \tfrac{96T\sigma_{X, f}\gnorm{\sigma}{2}{}}{D_X}$ and noting \eqref{eq:int_rel166}, we obtain
	\begin{equation*}
	\paran[\big]{1-\tfrac{96\gnorm{\sigma}{2}{2}}{\tau_t(\eta_t-L_0-BL_f)}}^{-1} \tfrac{96\gnorm{\sigma}{2}{2}\gamma_i}{\gamma_t\tau_t(\eta_i-L_0-BL_f)} \le 2\tfrac{96\gnorm{\sigma}{2}{2}}{\tau\eta} \le 2\tfrac{\gnorm{\sigma}{2}{}D_X}{T\sigma_{X, f}} \le \tfrac{{2}}{T},
	\end{equation*}
	where in the last relation, we used the fact that $\sigma_{X, f} \ge D_X\gnorm{\sigma}{2}{}$.
	In view of the above relation and \eqref{eq:R2}, we can set 
	\begin{equation}\label{eq:R2_val_cvx}
	R_2 := \tfrac{{2}}{T}.
	\end{equation}
	Noting \eqref{eq:R1} along with the fact that $\HH_* \ge H_0 + H_f\gnorm{y^*}{2}{} +\tfrac{L_fD_X[\gnorm{y^*}{2}{}-B]_+}{2} $, setting $y_0 =\zero$, using \eqref{eq:int_rel166}, \eqref{eq:int_rel100}, $\gamma_t\tau_t = \tau \ge \sqrt{96T}\sigma_{X, f}$, $\tsum_{ i =0}^{t} \tfrac{\gamma_i}{\eta_i-L_0-BL_f} = \tfrac{t+1}{\eta} \le \tfrac{\sqrt{T}D_X}{\sqrt{2[\HH_*^2+\sigma_0^2+48B^2\gnorm{\sigma}{2}{2}] } }$, and $\tsum_{ i =1}^{t} \tfrac{\gamma_i\theta_i^2}{\tau_i} + \tfrac{\gamma_t}{\tau_t} =\tfrac{t+1}{\tau}\le \tfrac{T}{\tau}$ 
	for all $t \le T-1$, we can see that the RHS of \eqref{eq:R1} is at most
	\begin{align*}
	&2\big[2\sigma_0^2+ 48\gnorm{\sigma}{2}{2}\big\{ \tfrac{7}{12}\gnorm{y^*}{2}{2} + \tfrac{\eta}{\tau}D_X^2 + \tfrac{\sqrt{2T}D_X\HH_*^2}{\sqrt{\HH_*^2+\sigma_0^2+ 48B^2\gnorm{\sigma}{2}{2}}} \tfrac{B}{\sqrt{96T}\sigma_{X, f}} + 12\sigma_{X, f}^2 \tfrac{T}{\tau^2}\big\} \big]\\
	&\le2\big[2\sigma_0^2 + 48\gnorm{\sigma}{2}{2}\big\{ \tfrac{7}{12}\gnorm{y^*}{2}{2} + \tfrac{\eta}{\tau}D_X^2+ \tfrac{D_XB\HH_*}{\sqrt{48}\sigma_{X, f}} + 12T\sigma_{X, f}^2\tfrac{B^2}{96T\sigma_{X, f}^2}\big\}\big]\\
	&\le 2\big[2\sigma_0^2 + 48\gnorm{\sigma}{2}{2}\big\{\tfrac{7}{12}\gnorm{y^*}{2}{2}+ \tfrac{D_X}{\sigma_{X, f}}\paran[\big]{B\sqrt{\tfrac{[\HH_*^2+\sigma_0^2 + 48B^2\gnorm{\sigma}{2}{2}]}{48}} + \tfrac{B\HH_*}{\sqrt{48}}} +\tfrac{6\max\{\MM, 4\gnorm{\sigma}{2}{}\}BD_X}{2\max\{\MM, 4\gnorm{\sigma}{2}{}\}}\tfrac{B}{D_X}+\tfrac{B^2}{8} \big\}\big]\\
	&\le 2\big[2\sigma_0^2+ 28\gnorm{\sigma}{2}{2}\gnorm{y^*}{2}{2} + 75B^2\gnorm{\sigma}{2}{2} +\sqrt{48} \gnorm{\sigma}{2}{}[2B\HH_*+ (B\sigma_0+\sqrt{48}B^2\gnorm{\sigma}{2}{})] \big]
	\end{align*}
	where in the last inequality, we used the fact that $\tfrac{\gnorm{\sigma}{2}{}D_X}{\sigma_{X, f}} \le 1$. Note that the last term in the above sequence of relations is a constant 
	satisfying the requirement in \eqref{eq:R1}. Hence we can set 
	\begin{equation}\label{eq:R1_val_cvx}
	R_1:=2\big[2\sigma_0^2+ 28\gnorm{\sigma}{2}{2}\gnorm{y^*}{2}{2} + 75B^2\gnorm{\sigma}{2}{2} +\sqrt{48} \gnorm{\sigma}{2}{}[2B\HH_*+ (B\sigma_0+\sqrt{48}B^2\gnorm{\sigma}{2}{})] \big].
	\end{equation} 
	Then using Lemma \ref{lem:bound_zeta} and noting \eqref{eq:R2_val_cvx}, we have for all $t \le T-1$
	\[\Ebb[\gnorm{\delta_t^G}{\ast}{2}] \le \begin{cases}
	2\sigma_0^2 &\text{if }\gnorm{\sigma}{2}{} = \sigma_{f} =0;\\
	R_1\big(1 + \tfrac{2}{T}\big)^{T-1} \le R_1e^2 &\text{otherwise}.
	\end{cases} .\]
	Noting the above relation, \eqref{eq:R1_val_cvx} and the definition of $\zeta$, we have 
	\begin{equation}\label{eq:int_rel170}
	\Ebb[\gnorm{\delta^G_t}{\ast}{2}] \le \zeta^2, \tab \forall \ t \le T-1.
	\end{equation}
	So according to \eqref{eq:bounding_optimality} with $y_0 = \zero$ and using \eqref{eq:int_rel170}, we have
	\begin{align*}
	\Ebb &[\psi_0(\wb{x}_T) -\psi_0(x^*)]\le \tfrac{1}{T} \big[(\eta+L_0+BL_f)W(x^*, x_0) +  \tfrac{2(\zeta^2+H_0^2)T}{\eta}  + 12\sigma_{X,f}^2 \tfrac{T}{\tau} \big].
	\end{align*} 
	Using the bound $W(x^*, x_0)\le D_X^2$, we obtain \eqref{eq:bounding_opt_convex}.
	From \eqref{eq:bounding_infeasibility} and \eqref{eq:int_rel170}, we have for $T\ge 1$
	\begin{align*} 
	\Ebb &\gnorm*{\relu{\psi(\wb{x}_T)}}{2}{} \le \tfrac{1}{T} \big[ 3(\gnorm{y^*}{2}{}+1)^2\tau + (\eta+L_0+BL_f)W(x^*,x_0) +\tfrac{2\paran{\zeta^2+ \HH_*^2}T}{\eta} + \tfrac{13\sigma_{X,f}^2T}{\tau}\big].
	\end{align*}
	Using bounds $W(x^*, x_0)\le D_X^2$, we obtain \eqref{eq:bounding_infeas_convex}.
	Using \eqref{eq:bounding_infeas_convex} and \eqref{eq:T_convex}, we have
	\begin{equation*} 
	\begin{split}
	\Ebb \gnorm*{\relu{\psi(\wb{x}_T)}}{2}{} &\le \tfrac{\eps}{3} +\tfrac{\eps}{3}+\tfrac{\eps}{3} = \eps,
	\end{split}
	\end{equation*}
	Similarly, using \eqref{eq:bounding_opt_convex} and \eqref{eq:T_convex}, it is easy to observe that $\Ebb \bracket*{\psi_0(\wb{x}_T) - \psi_0(x^*)} \le \eps$.
	Hence we conclude the proof.
\end{proof*}

\section{Proximal Point Methods for Nonconvex Functional Constrained Porblems} \label{sec-alg1}
{Our goal in this section is to extend
the ConEx method to the nonconvex setting by leveraging the framework of proximal point method. To achieve this goal, we first need to understand the convergence properties of the proximal point method for nonconvex functional constrained optimization.}
We first recall the assumptions mentioned briefly in Section \ref{sec:intro}
for the nonconvex case.
\begin{enumerate}
	\item $f_i:X\to \Rbb$ are nonconvex and Lipschitz-smooth functions satisfying the lower curvature condition in \eqref{eq:weak-convexity} with parameters $\mu_i (> 0),\ i = 0, \dots, m$. 
	\item $\chi_i: X \to \Rbb, \ i = 0, \dots, m$ are convex and continuous functions.
\end{enumerate}
Let $x^{*}\in X$ be a global optimal solution and $\psi^*_0=\psi_0(x^*)$ be the optimal value of problem \eqref{main-prob}. According to the above assumptions and compactness of $X$, we have $\psi^*_0>-\infty$.

It should be noted, however, that solving nonconvex problem \eqref{main-prob} to the optimality condition in Definition \ref{define-apprx-opt} is generally difficult. 
Due to the hardness of the problem, we focus on the necessary condition for guaranteeing local optimality.
For this purpose, we need to properly generalize the \emph{subdifferential} for the objective function $\psi_0$ and constraints $\psi_i$
because they are possibly nonconvex and nonsmooth.
Let $\pgrad \chi_0$ and $\pgrad \chi_i, i \in [m]$ be the subdifferentials of $\chi_0$ and $\chi_i$.
We define 
\begin{align*}
\pgrad \psi_i(x) &\equiv \{\grad f_i(x)\} + \pgrad \chi_i(x), \tab i = 0, \dots, m.
\end{align*}
Note that $\pgrad \psi_i = \{ \grad f_i\}$ when $\psi$ is a ``purely" differentiable nonconvex function and $\pgrad \psi_i = \pgrad \chi_i$ when $\psi_i$ is a nonsmooth convex function. 

Using these objects, we can define a Karush-Kuhn-Tucker (KKT) condition for the  nonsmooth nonconvex problem~\eqref{main-prob} 
as follows.
\begin{definition}\label{defi:kkt}
	We say that $x^* \in X $ is a critical KKT point of \eqref{main-prob} if 
	$\psi_i(x^*) \le 0$ and $\exists$ $y^*=\bracket{
		y^{*\br{1}}, \dots, y^{*\br{m}} }^T \ge \zero$ s.t.  
	\begin{equation}\label{eq:criterion-main-prob}
	\begin{split}
	y^\sbr{i}\psi_i(x^*) &= 0, \tab i\in[m],\\[-1mm ]
	d\paran[\big]{\pgrad \psi_0(x^*) + \tsum_{i=1}^{m}y^\sbr{i}\pgrad \psi_i(x^*)+ N_X(x^*), \zero} &= 0.
	\end{split}
	\end{equation}
\end{definition}
The parameters $\{y^{*\br{i}}\}_{i \in [m]}$ are called \emph{Lagrange multipliers}. For brevity, we use the notation $y^*$ and $\bracket{
	y^{*\br{1}}, \dots, y^{*\br{m}} }^T$ interchangeably.

{To ensure that the KKT condition is necessary to achieve optimality, we need a constraint qualification (CQ). A well-known CQ for smooth nonlinear optimization is the classical Mangasarian-Fromovitz constraint qualification (MFCQ, see \cite{manga67}). A slightly extended MFCQ to deal with nonsmooth functions is defined as follows.}

\begin{definition}
[MFCQ]\label{eq:mangafromo} 
	Let $x\in X$ be a point such that $\psi_i(x) \le 0$ for all $i \in [m]$. We say that $x$ satisfies MFCQ if there exists a direction $z \in -N^*_X(x)$ such that 
	\begin{equation}\label{eq:mangafromo-2}
		\max_{v \in \pgrad \psi_i(x)} v^Tz < 0, \tab i \in \AA(x) ,
	\end{equation}
	where $\AA(x) $ denotes the indicator set of all active constraints. {Moreover, if $\AA(x) = \emptyset$ then we say that $x$ satisfies MFCQ.}
\end{definition}

{ The following result ensures that the KKT condition in \eqref{eq:criterion-main-prob}
is a first-order necessary optimality condition for the composite nonconvex 
optimization. The proof of this result is given in the appendix.}

\begin{proposition}\label{thm:MFCQ}
	Let $x^*$ be a local optimal solution of the problem \eqref{main-prob}. If $x^*$ satisfies MFCQ (Definition~\ref{eq:mangafromo}),
	then there exists $y^\sbr{i} \ge 0, \ i\in[m]$ such that 
	\eqnok{eq:criterion-main-prob} holds.
\end{proposition}

{In order to describe the stationary condition at the limit points of the solutions generated by the algorithm, we assume that MFCQ holds on an enlarged domain containing all the limit points of the algorithm.}
\begin{assumption}[Strong MFCQ]\label{ass:strong-mangafromo}
	{All the points in the feasible set of problem~\eqref{main-prob} satisfy MFCQ.}
\end{assumption}

{
\begin{definition}\label{defi:approx-kkt-v1}
	We say that a  point $x \in X$ is an  $\eps$-KKT point for problem~\eqref{main-prob} if it is feasible, i.e., $\psi(x)\le\zero$, and there exists $y\ge \zero$ such that 
	\begin{equation}\label{epsi-kkt}
	\begin{aligned}
	\tsum_{i=1}^m \abs{y^{\br{i}}\psi_i(x)} &\le \epsilon, \\
	\bracket*{d\paran*{\pgrad \psi_0(x) + \tsum_{i=1}^m y^{(i)}\pgrad\psi_i(x)+N_X(x), \zero}}^2  &\le \epsilon	
	\end{aligned}
	\end{equation}
	Moreover, we call {$x$ a stochastic $\eps$-KKT point if  $x$ is a feasible random vector, and $y$ is a random vector such that} \eqref{epsi-kkt} is satisfied under expectation
w.r.t. the random variables involved in the process generating $x$.
\end{definition}
Definition~\ref{defi:approx-kkt-v1} describes a type of points which approximately satisfy the KKT condition to a specific accuracy.
Particularly, a $0$-KKT point satisfies the KKT condition \eqref{eq:criterion-main-prob} exactly and hence is a KKT point. 
However, as will be shown in our convergence analysis, often it is more convenient to describe convergence in terms of the following measure.
\begin{definition} \label{def:approx_kkt}
	We say that a point $\wh{x} \in X$ is an $(\epsilon, \delta)$-KKT point for problem~\eqref{main-prob} if there exists a  $\epsilon$-KKT point $x$ ($\psi(x)\le \zero$) and 
	\begin{equation} \label{eq:delta-proximity-1}
	\gnorm{x-\wh{x}}{}{2} \le \delta.
	\end{equation}
	Moreover, $\wh{x}$ is a stochastic $(\epsilon,\delta)$-KKT point if $x$ is a {stochastic $\epsilon$}-KKT point and 
	\begin{equation} \label{eq:delta-proximity-2}
		\Ebb \big[\gnorm{x-\wh{x}}{}{2}\big]\le \delta.
	\end{equation} 
\end{definition}
}

\subsection{Basic proximal point method} \label{sec_exact_ppt}
The main idea of the proximal point method (see Algorithm~\ref{alg:exact-pp}) is to transform the nonconvex problem into a sequence of convex subproblems by adding strongly convex terms to the objective and to the constraints. 
Specifically, each step of the proximal point algorithm involves a convex subproblem~\eqref{inner-convex-prob} with convex constraints. It can be observed that, by adding a strongly convex proximal term, $\psi_0(x; x_{k-1})$ is $\mu_0$-strongly convex and $\psi_{i}(x; x_{k-1})$ is $\mu_i$-strongly convex relative to $W(\cdot,\cdot)$. {Hence, if each subproblem is feasible, it must have a unique globally optimal solution.}

{
One way to ensure the well-definedness of each subproblem is always keeping the solutions $\{x_k\}$  feasible. Nevertheless, feasibility is insufficient to guarantee that the KKT condition holds for the original problem. For illustration, we consider the following example:
\begin{equation}\label{example-no-kkt} 
	\begin{split} 
	\min_{x \in [-2,2]} \tab &x - x^2\\[-2mm]
	\text{s.t.} \  \tab  &\frac{1}{2}(x-1)^2 \le 0.
	\end{split}
\end{equation} 
 Consider Algorithm~\ref{alg:exact-pp} applied to the above problem with input $x_0=1$, $\mu_0 = 2, \mu_1 \ge 0$ and $W(x,y) = \tfrac{1}{2} (x-y)^2$. Clearly, $x_0$ is the only feasible solution and also the output of 
 the algorithm. However, the main issue in this setting is that the KKT condition fails at $x_0$, since there does not exist a Lagrange multiplier  $y\ge 0$  such that the stationarity condition
\[
  -1 + y (x-1)=0
\]
holds at $x=1$. 
Due to this issue, it is desired to generate a sequence of strictly feasible solutions $\{x_k\}$, which motivates the following strict feasibility assumption on the initial point.
\begin{assumption}\label{ass:strict-feasibility}
	$x_0$ is a strictly feasible solution to the nonlinear functional constraints of problem~(\ref{main-prob}), namely, $\psi(x_0)<\zero$.
\end{assumption}
}

\begin{algorithm}[h]
	\caption{Exact Constrained Proximal Point Algorithm }
	\label{alg:exact-pp}
	\begin{algorithmic}[1]
		\REQUIRE Input $x_0$
		\FOR{ $k = 1, \dots, K$} 
		\STATE Set \vspace{-2em}
		\begin{align*}
		{\psi_i(x;x_{k-1})} &:= {\psi_i(x)} + 2 \mu_i W(x, x_{k-1}), \tab i = 0, \dots, m.
		\end{align*}\vspace{-1.5em}
		\STATE  Obtain $x_k$ as the optimal solution of the following problem \vspace{-0.5em}
		\begin{equation}\label{inner-convex-prob} 
			\begin{split} 
			\min_{x \in X} \tab &\psi_0(x; x_{k-1})\\[-2mm]
			\text{s.t.} \  \tab  &\psi_{i}(x; x_{k-1}) \le 0, \tab i \in [m].
			\end{split}
		\end{equation} \vspace{-1em}
		\STATE \textbf{If} $x_{k-1} = x_k$ then \textbf{return} $x_k$.
		\ENDFOR
		\STATE \textbf{return} $x_{K} $
		\end{algorithmic}
\end{algorithm}

Strict feasibility of the generated solutions, as well as some other properties of Algorithm~\ref{alg:exact-pp} are established by the following theorem.

\begin{theorem}\label{thm:exact-pp}
Suppose that Assumption~\ref{ass:strict-feasibility} holds. Then, 
	\begin{enumerate}
		\item [a)] all the generated points $x_1,x_2,...,x_k...$ are strictly feasible for problem \eqref{main-prob}, and $\braces*{\psi_0(x_k)}$ is a monotonically decreasing sequence;
		\item [b)] either there exists a $\wh{k}$ such that $x_\wh{k}=x_{\wh{k}-1}$ and $x_\wh{k}$ is a KKT point of \eqref{main-prob},   or $\braces*{\psi_0(x_k)}$ is a strictly decreasing sequence that has a limit point $\wt{\psi}_0>-\infty$, and we have 
\[\lim_{k\rightarrow +\infty} \gnorm{x_k - x_{k-1}}{}{} { =} 0.\]
	\end{enumerate}
\end{theorem}\vspace{-0.5em}

\begin{proofx}
Part a).  
We prove the property of strict feasibility  by induction. First, the strict feasibility of $x_0$ is by Assumption~\ref{ass:strict-feasibility}. Next, assume that our claim holds for $x_{k-1}$, namely $\psi_{i}(x_{k-1})<0$, then $x_{k-1}$ is strictly feasible for the $k$-th subproblem $\eqref{inner-convex-prob}$ with $\psi_0(\cdot; x_{k-1})$ and $\psi(\cdot;x_{k-1})$.  
If $x_k = x_{k-1}$, the claim holds by the induction assumption.
Otherwise, by the feasibility of $x_k$ for \eqref{inner-convex-prob}, we have $\psi_{i}(x_k) < \psi_i(x_k; x_{k-1}) \le 0$ for all $i \in [m]$.\\
To show the monotonicity of $\{\psi_0(x_k)\}$, we establish some sufficient descent property. Due to the optimality of $x_k$ for solving subproblem \eqref{inner-convex-prob} and  the strong convexity of objective {$\psi_0(\cdot; x_{k-1})$}, we have for all feasible $x$ that 
\[
\psi_0(x; x_{k-1}) \ge \psi_0(x_k; x_{k-1}) + \mu_0 W(x, x_{k})\ge \psi_0(x_k)+2\mu_0 W(x_k, x_{k-1})+\mu_0 W(x, x_k).
\] 
In above, taking $x=x_{k-1}$ and using the strong convexity of $\omega(x)$ we have 
\begin{equation} \label{pp:square-bound}
\gnorm{x_{k-1}-x_{k}}{}{2} \le \tfrac{2}{3\mu_0} \bracket{\psi_0(x_{k-1})-\psi_0(x_k)},	
\end{equation}
which implies that  $\braces{\psi_0(x_k)}$ is a decreasing sequence. 

Part b). Since we already show that $x_{\wh{k}-1}$ is strictly feasible, if $x_\wh{k}=x_{\wh{k}-1}$, then we conclude from Slater's condition  that $x_\wh{k}$ is a KKT point. If  $x_k\neq x_{k-1}$  for all $k$, 
 then \eqref{pp:square-bound} implies that $\braces*{\psi_0(x_k)}$ is strictly decreasing.
 Since this sequence is lower-bounded, we conclude that $\lim_{k\rightarrow +\infty} \psi_0(x_k)=\wt{\psi}_0$ for some $\wt{\psi}_0\ge \psi^*_0$. Taking limit $k\rightarrow \infty$ in \eqref{pp:square-bound} we have $\lim_{k\rightarrow +\infty} \gnorm{x_k - x_{k-1}}{}{}=0$.
\end{proofx}

{Strict feasibility established in Theorem \ref{thm:exact-pp} along with Slater's condition guarantees that there exist Lagrange multipliers $\{y_k\}$ such that   $(x_k, y_k)$ satisfies the KKT condition of subproblem~\eqref{inner-convex-prob} for each $k\ge 1$.}
\begin{lemma}
	\label{prop:strong-cvx}
	Let $\paran{x_k,y_k}$ be a KKT point of the subproblem \eqref{inner-convex-prob}.
	Then
	\begin{equation}
	\psi_0(x;x_{k-1})-\psi_0(x_k; x_{k-1})+\left\langle y_k,\psi(x; x_{k-1})\right\rangle 
	\ge  \big(\mu_0+\mu^Ty_k\big) W(x, x_k),\quad x\in X.\label{eq:strong-cvx}
	\end{equation}
\end{lemma}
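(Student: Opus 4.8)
The plan is to exploit the two defining properties of a KKT point of the convex subproblem \eqref{inner-convex-prob}: the stationarity condition and complementary slackness, combined with the strong convexity that each $\psi_i(\cdot;x_{k-1})$ inherits from the added proximal term $2\mu_i W(\cdot,x_{k-1})$. First I would recall that, since $f_i$ satisfies the lower curvature condition \eqref{eq:weak-convexity} with parameter $\mu_i$ and $W(\cdot,x_{k-1})$ dominates $\tfrac12\|\cdot - x_{k-1}\|^2$ up to the strong convexity of $\omega$, the function $\psi_i(x;x_{k-1}) = f_i(x) + \chi_i(x) + 2\mu_i W(x,x_{k-1})$ is $\mu_i$-strongly convex relative to $W(\cdot,\cdot)$; likewise $\psi_0(\cdot;x_{k-1})$ is $\mu_0$-strongly convex relative to $W$. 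This is exactly the observation already made in the paragraph introducing Algorithm~\ref{alg:exact-pp}.

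Next I would write out the Lagrangian of the subproblem, $\Lambda(x) := \psi_0(x;x_{k-1}) + \langle y_k, \psi(x;x_{k-1})\rangle$, where $\psi(\cdot;x_{k-1}) = [\psi_1(\cdot;x_{k-1}),\dots,\psi_m(\cdot;x_{k-1})]^T$. Because $y_k \ge \zero$ and each component $\psi_i(\cdot;x_{k-1})$ is $\mu_i$-strongly convex relative to $W$, the function $\Lambda$ is $(\mu_0 + \mu^Ty_k)$-strongly convex relative to $W$, i.e. for all $x, x' \in X$,
\[
\Lambda(x) \ge \Lambda(x') + \langle \Lambda'(x'), x - x'\rangle + (\mu_0 + \mu^Ty_k) W(x, x').
\]
The KKT stationarity condition says that there is a subgradient $\Lambda'(x_k) \in \pgrad \Lambda(x_k)$ lying in $-N_X(x_k)$, so that $\langle \Lambda'(x_k), x - x_k\rangle \ge 0$ for every $x \in X$. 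Applying the strong-convexity inequality at $x' = x_k$ and dropping the nonnegative inner-product term yields
\[
\Lambda(x) \ge \Lambda(x_k) + (\mu_0 + \mu^Ty_k) W(x, x_k), \qquad x \in X.
\]
Finally, complementary slackness $\langle y_k, \psi(x_k;x_{k-1})\rangle = 0$ gives $\Lambda(x_k) = \psi_0(x_k;x_{k-1})$, and substituting $\Lambda(x) = \psi_0(x;x_{k-1}) + \langle y_k, \psi(x;x_{k-1})\rangle$ produces exactly \eqref{eq:strong-cvx}.

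The routine parts are the strong-convexity bookkeeping; the only point that needs a little care is the handling of the subdifferential in the nonsmooth composite setting — one must make sure the KKT stationarity is stated with the subdifferential $\pgrad \psi_i = \{\grad f_i\} + \pgrad\chi_i$ used in Definition~\ref{defi:kkt}, that the sum rule $\pgrad \Lambda(x_k) \subseteq \pgrad \psi_0(x_k;x_{k-1}) + \sum_i y_k^{(i)}\pgrad\psi_i(x_k;x_{k-1})$ is applied in the correct (subset) direction so the stationarity we have is at least as strong as what we need, and that the normal-cone term $N_X(x_k)$ is absorbed via the variational inequality $\langle g, x - x_k\rangle \ge 0$ for $g \in -N_X(x_k)$. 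So the main (minor) obstacle is simply being precise about which inequality in the subgradient calculus we are invoking; everything else is a direct computation from the definition of a KKT point plus relative strong convexity.
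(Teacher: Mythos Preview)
Your proposal is correct and mirrors the paper's proof: both use the $\mu_i$-strong convexity of each $\psi_i(\cdot;x_{k-1})$ relative to $W$, apply it at $x_k$ with the specific subgradients furnished by the KKT stationarity condition (so the inner-product term is nonnegative by the normal-cone property), and then invoke complementary slackness to drop $\langle y_k,\psi(x_k;x_{k-1})\rangle$. The only cosmetic difference is that you package things through the Lagrangian $\Lambda$, while the paper writes the component-wise strong-convexity inequalities and sums; note also that the inclusion you actually need is the easy one, $\pgrad\psi_0+\sum_i y_k^{(i)}\pgrad\psi_i\subseteq\pgrad\Lambda$, not the direction you wrote.
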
\vspace{-0.5em}
\begin{proofx}
	{Due to the KKT condition, there exist $\psi'_0(x_k, x_{k-1})\in\pgrad\psi_0(x_k, x_{k-1})$, $\psi'_i(x_k,x_{k-1}) \in \pgrad \psi_i(x_k, x_{k-1})$ and $z^{*}\in N_X(x_k)$  satisfying the condition 
	\begin{equation}\label{eq:strong-cvx-mid-01}
		\psi'_0(x_k, x_{k-1}) + \tsum_{i=1}^m y_k^{(i)}\psi'_i(x_k,x_{k-1}) + z^*=0.
	\end{equation}
	}
	According to the strong convexity of $\psi_0(\cdot; x_{k-1})$, $\psi_i(\cdot; x_{k-1})$, and the fact that $y_k\ge \zero$, we have
	\begin{align*}
	\psi_0(x; x_{k-1})+\inprod{y_k}{\psi(x; x_{k-1})} 
	&\ge \psi_0(x_k; x_{k-1})+\inprod{ \psi'_0(x_k; x_{k-1})}{x-x_k} + \mu_0 W(x, x_k) \\
	& \quad + \inprod{y_k}{\psi(x_k; x_{k-1})} + \inprod{\tsum_{i=1}^my_k^{(i)}\psi'_i(x_k; x_{k-1})}{x-x_k}+ \paran{\mu^Ty_k} W(x, x_k) \\ 
	& =\psi_0(x_k; x_{k-1})+\inprod{\psi'_0(x_k; x_{k-1})+\tsum_{i=1}^my_k^{(i)}\psi'_i(x_k;x_{k-1})}{x-x_k}\\
	 &\quad+ (\mu_0+ \mu^Ty_k) W(x, x_k),
	\label{strong-cvx-kkt-mid}
	\end{align*}
	where the last equality follows from the complementary slackness part of the KKT condition. {Moreover, appealing to the definition of normal cone and property~(\ref{eq:strong-cvx-mid-01}),  we have that
	\[
	\inprod{\psi'_0(x_k;x_{k-1})+\tsum_{i}y_k^{(i)}\psi'_i(x_k; x_{k-1})}{x-x_k}=-\langle z^*, x-x_k\rangle \ge 0,
	\]
	Putting the above two inequalities together, we arrive at relation \eqref{eq:strong-cvx}.}
\end{proofx}

In view of Lemma~\ref{prop:strong-cvx} and Theorem~\ref{thm:exact-pp}, {we  develop a boundedness condition on the sequence $\{y_k\}$.}

\begin{proposition}\label{prop:bound_y_exact_pp}
	{Suppose that Assumption~\ref{ass:strict-feasibility} holds.}
	Then, for all $k \ge 1$, there exists $y_k = \bracket {y_k^{\br{1}},  \dots, y_k^{\br{m}}}^T$ such that $y_k \ge \zero$, and 
\begin{equation} \label{eq:criterion-inner-prob}
\begin{aligned}
y_k^\br{i}\psi_i(x_k; x_{k-1}) &= 0, \tab  i =1, \dots, m,\\
\pgrad \psi_0(x_k; x_{k-1}) + \tsum_{i\in[m]}y_k^\br{i}\pgrad \psi_i(x_k; x_{k-1})+N_X(x_k) &\ni \zero.
\end{aligned}
\end{equation}
and we have the following boundedness condition:
 \begin{equation}
\|y_k\|_1 \le \tfrac{\psi_0(x_{k-1})-\psi_0(x_k)}{\min_{1\le i\le m} \braces{-\psi_{i}(x_{k-1})}}, \quad k=1,2,3,\ldots \label{eq:bound-yk} 	
 \end{equation}
\end{proposition}

\begin{proofx}
Strict feasibility of $x_0$ along with Theorem~\ref{thm:exact-pp}.a) imply that the subproblem \eqref{inner-convex-prob} in Algorithm \ref{alg:exact-pp} satisfies {Slater's condition for all $k \ge 1$, which ensures that the KKT condition \eqref{eq:criterion-inner-prob} holds at optimality.} In particular, the first relation in \eqref{eq:criterion-inner-prob} is a direct application of the KKT complementary slackness and the second relation is an application of the KKT stationarity. 
Similarly, applying Lemma~\ref{prop:strong-cvx} and setting $x = x_{k-1}$ in \eqref{eq:strong-cvx} yield
\begin{align*}
	\psi_0(x_{k-1}) - \psi_0(x_k) & 
	\ge\paran{\mu_0+\mu^T y_k} W(x_{k-1}, x_k) + 2\mu_0W(x_k, x_{k-1})  -\tsum_{i=1}^m y_k^\br{i}\psi_{i}(x_{k-1})	 \\
	& \ge \gnorm{y_k^\br{i}}{1}{} \min_{1\le i \le m} \braces{-\psi_{i}(x_{k-1})}.
\end{align*}
 Thus relation \eqref{eq:bound-yk} immediately follows.
\end{proofx}

{
We have two comments about the above results. 
First, in order to show that the KKT solutions $\{(x_k,y_k)\}_{k \ge 1}$ are well-defined, we need to ensure that Algorithm~\ref{alg:exact-pp} generates a path of strictly feasible solutions $\{x_k\}_{k \ge 0}$. 
However, to achieve this goal, Algorithm~\ref{alg:exact-pp} requires an oracle that solves the convex subproblem~(\ref{inner-convex-prob}) exactly. Since computation of the optimal solution  can be impractical for large-scale or stochastic optimization, it is desirable to develop inexact variants of the proximal point method which can deal with approximate solutions for the subproblem \eqref{inner-convex-prob}.
Second, we note that the bound on the optimal dual solution $\|y_k\|_1$, as provided in Proposition~\ref{prop:bound_y_exact_pp}, depends on the algorithm. 
For some special cases, the bound is uniform for the whole sequence. 
For example, if the algorithm generates $x_k=x_{k-1}$ for some $k > 1$, then the stationary point is interior to the inequality constraints and we have $y_k=\zero$. 
However, \eqref{eq:bound-yk} does not rule out the possibility that  $\gnorm{y_k}{1}{}$ tends to infinity when $x_k$ converges to boundary points. 
Therefore, it is difficult to claim that
the KKT conditions of problem \eqref{main-prob} will be satisfied at the limit points of $\{x_k\}$.
 Due to the above two concerns, we turn our attention to inexact proximal point methods which can deal with approximate solutions for the subproblem \eqref{inner-convex-prob}.
 In particular, we will discuss additional constraint qualifications to ensure that the optimal dual solutions have a uniform bound,
 and thus to 
allow us to establish the convergence of the inexact proximal point method to the KKT solutions. 
}

\subsection{Inexact proximal point method under MFCQ} \label{sec:inexact_ppt}
In Algorithm~\ref{alg:inexact-pp}, we present an inexact extension of the proximal point method, which  generalizes Algorithm~\ref{alg:exact-pp} by allowing the subproblems to be solved approximately.
To distinguish  exact solutions from  approximate ones, we denote the exact solution as $x_k^*$ and the corresponding dual solution as $y_k^*$ hereafter.  Since each subproblem \eqref{inner-convex-prob} is solved inexactly, the sequence generated by Algorithm~\ref{alg:inexact-pp} can become infeasible with respect to the original problem. If $x_{k-1}$ is infeasible with respect to \eqref{main-prob}, then
we can not guarantee feasibility of the subproblem \eqref{inner-convex-prob} in general.
This also implies obtaining bounds on Lagrange multipliers is more challenging for inexact case. 
However, we show that if the successive problems are solved accurately enough then 
both the strict feasibility of the iterates and the boundedness of $\{y_k^*\}$ can be ensured.

\begin{algorithm}[h]
	\caption{Inexact Constrained Proximal Point Algorithm}
	\label{alg:inexact-pp}
	\begin{algorithmic}[1]
		\STATE Input $x_0$ is strictly feasible.
		\FOR { $k = 1, \dots, K$}
		\STATE $x_k \gets $ an approximate solution of subproblem \eqref{inner-convex-prob}. 
		\ENDFOR
		\STATE $\wh{k} = \argmin_{1\le k\le K} \psi_0(x_{k-1}) - \psi_0(x_k)$. 
		\RETURN $x_{\wh{k}}$.
	\end{algorithmic}
\end{algorithm}

Throughout the rest of this subsection, we assume that 
$\psi_i(x; x_{k-1})$ is Lipschitz continuous with constant $M_i$, $i = 0, \dots, m$. Proposition~\ref{prop:ipp-error-assum} shows 
that the sequence $\{x_k\}$ is strictly feasible if the subproblem \eqref{inner-convex-prob} is solved accurately enough.

\begin{proposition}\label{prop:ipp-error-assum} In Algorithm~\ref{alg:inexact-pp}, suppose that Assumption~\ref{ass:strict-feasibility} holds and that
the subproblem \eqref{inner-convex-prob}, if solvable, is returned an approximate solution  $x_k$ satisfying
		\begin{equation}
		\sqrt{\tfrac{M_i}{\mu_i}\|x_{k}-x_{k}^{*}\|}+\|x_{k}-x_{k}^{*}\| \le \frac{1}{2} \|x_{k-1}-x_{k}^{*}\|,\ i\in[m], \label{ipp:dist-bound-const}
		\end{equation}
		 where $x_k^*$ (which depends on $x_{k-1}$) is the optimal solution, {then the sequence $\{x_k\}$ generated by Algorithm~\ref{alg:inexact-pp} is strictly feasible}, i.e. $\psi(x_k)<\zero$, $k=0,1,2,3,\ldots$. 		
		Moreover, there exists subproblem solutions $(x_k^*,y_k^*)$ satisfying the KKT condition for \eqref{inner-convex-prob}.
		
		If we further assume that $\{x_k\}$ satisfies:
		\begin{equation}
		\sqrt{\tfrac{2M_0}{\mu_0}\|x_{k}-x_{k}^{*}\|}+\|x_{k}-x_{k}^{*}\|\le \|x_{k-1}-x_{k}^{*}\|,\label{ipp:dist-bound-obj}
		\end{equation}
		then $\{\psi_0(x_k)\}$ is monotonically decreasing  and converges to a limit point $\wt{\psi}_0$, and
		\begin{equation}\label{eq:ipp-dist-conv-zero}
		\lim_{k\rightarrow \infty} W(x_k, x_{k-1})= \lim_{k\rightarrow \infty}W(x_{k-1}, x_k^*)  = 0.
		\end{equation}
\end{proposition}

\begin{proofx} 
Note that the strict feasibility of $x_{k-1}$ trivially implies that subproblem~(\ref{inner-convex-prob}) is solvable. We want to show that the strict feasibility of $x_{k-1}$, along with condition~(\ref{ipp:dist-bound-const}),   implies the strict feasibility of $x_k$. Therefore, using $\psi(x_0)<\zero$, it is easy to prove the strict feasibility of the whole sequence by induction.

 Suppose that $x_{k-1}$ is strictly feasible. 
From the definition of $\psi_i(x; x_{k-1})$ and feasibility of $x_k^*$, we have 
	\[ 
\psi_i(x_{k})+2\mu_i W(x_{k}, x_{k-1})=\psi_i(x_{k}; x_{k-1})\le\psi_i(x_{k}^{*}; x_{k-1})+M_i \|x_k-x_k^*\| \le M_i \|x_k-x_k^*\|,
\]
where the first inequality follows from the Lipschitz continuity of $\psi_i(x;x_{k-1})$.
Using the triangle inequality and (\ref{ipp:dist-bound-const}),  we have
\begin{align*}
\sqrt{2\mu_i W(x_{k}, x_{k-1})}\ge \sqrt{\mu_i} \|x_{k}-x_{k-1}\|
& \ge \sqrt{\mu_i}  (\|x_{k-1}-x_{k}^*\|-\|x_{k}-x_{k}^*\|) \\
& \ge  2\sqrt{M_i \|x_k-x_k^*\|},
\end{align*}
for $i\in[m]$. Combining the above two results, we have $\psi_i(x_{k})+\frac{3}{2}\mu_i W(x_k,x_{k-1})\le0$. If $x_k \neq x_{k-1}$, then we have strict feasibility: $\psi_i(x_{k})<0$. Otherwise,  $x_k=x_{k-1}$ is strictly feasible. {Using the induction we complete the proof of strict feasibility of the whole sequence $\{x_k\}$.} 
 Moreover, the existence of $(x_k^*, y_k^*)$ immediately follows from Slater's condition and the KKT condition.

Applying Lemma~\ref{prop:strong-cvx} with $x= x_{k-1}$ and replacing the saddle point $(x_k, y_k)$ therein by $(x_k^*, y_k^*)$, we  deduce
\begin{align}
\psi_0(x_{k-1})& = \psi_0(x_{k-1}; x_{k-1})\nonumber \\
	& \ge \psi_0(x_k^*; x_{k-1})-\langle y_k^*, \psi(x_{k-1}; x_{k-1})\rangle +(\mu_0+\mu^Ty_k^*)W(x_{k-1}, x_k^*)\nonumber \\
 & \ge \psi_0(x_k; x_{k-1})-M_0 \|x_k-x_k^*\| +(\mu_0+\mu^Ty_k^*)W(x_{k-1}, x_k^*).\nonumber \\
 & = \psi_0(x_k)+2\mu_0 W(x_k, x_{k-1})-M_0 \|x_k-x_k^*\| +(\mu_0+\mu^Ty_k^*)W(x_{k-1}, x_k^*). \label{eq:ipp-mid-01}
\end{align}
where the second inequality follows from the Lipschitz continuity of $\psi_0(\cdot; x_{k-1})$ and the basic property $\langle y_k^*, \psi(x_{k-1}; x_{k-1})\rangle = \langle y_k^*, \psi(x_{k-1})\rangle \le 0$. 
Moreover, using \eqref{ipp:dist-bound-obj} and similar argument beforehand, we have 
\begin{equation*}
\sqrt{2\mu_0 W(x_{k}, x_{k-1})}\ge \sqrt{2M_0 \|x_k-x_k^*\|}.
\end{equation*}
Putting this result in (\ref{eq:ipp-mid-01}), we have
	\begin{equation}\label{ipp:fk-diff}
			\psi_0(x_k)+\mu_0 W(x_k, x_{k-1}) +(\mu_0+\mu^Ty_k^*) W(x_{k-1}, x_k^*) \le \psi_0(x_{k-1}).	
	\end{equation}
 We immediately observe that $\psi_0(x_k)$ is decreasing. Since $\psi_0$ is bounded below, we have the convergence $\lim_k \psi_0(x_k)=\wt{\psi}_0$ for some $\wt{\psi}_0>-\infty$.
		Summing up the above relation for $k=1,2,...,$ we have
	\begin{equation}\label{ipp:sum-square}
		\sum_{k=1}^\infty[ \mu_0W(x_k,x_{k-1})+ (\mu_0+\mu^Ty_k^*)W(x_{k-1}, x_k^*) ] \le \psi_0(x_0)-\wt{\psi}_0<+\infty.
	\end{equation}
Therefore, the convergence results in (\ref{eq:ipp-dist-conv-zero}) immediately follow.
\end{proofx}

\begin{remark}
	It should be noted that the inexactness criteria {include the subproblem} optimality criteria for the exact proximal point method (Algorithm~\ref{alg:exact-pp}) as a special case. Specifically, if we set $x_k=x_k^*$, then (\ref{ipp:dist-bound-const}) and (\ref{ipp:dist-bound-obj}) hold trivially. Hence all our convergence analysis applies to the exact proximal point discussed in subsection~\ref{sec_exact_ppt}.
\end{remark}

The following theorem establishes the asymptotic convergence of the proposed inexact proximal point method under some mild constraint qualification.
\begin{theorem}\label{thm:ipp-subseq-bound}
Suppose that all the assumptions in Proposition \ref{prop:ipp-error-assum} hold, $x^*$ is a limit point of the solution sequence and it satisfies MFCQ.  
If $\{x_{j_k}\}$ is a subsequence converging to $x^*$, then the dual sequence $\{y_{j_k}^*\}$ is bounded. Moreover, if $y^*$ is a limit point of $\{y_{j_k}^*\}$ ,then $(x^*, y^*)$ satisfies the KKT condition \eqref{eq:criterion-main-prob}.
\end{theorem}

\begin{proofx}
First, we establish the convergence of $\{x_{j_k}^*\}$ to $x^*$. It immediately follows from the assumption $\lim_{k\rightarrow\infty} x_{j_k}=x^*$ and  Proposition \ref{prop:ipp-error-assum} that $\lim_{k\rightarrow\infty} x_{j_k-1}=x^*$. Applying Proposition \ref{prop:ipp-error-assum}  and the triangle inequality, we have
\begin{align*}
\lim_{k\rightarrow\infty} \|x_{j_k}^*-x^*\| & \le \lim_{k\rightarrow\infty} \big[\|x_{j_k}^*-x_{j_k-1}\| + \|x_{j_k-1}-x^*\|\big]=0,
\end{align*}
which implies that 
\begin{equation}\label{eq:ipp-mid-02}
	\lim_{k\rightarrow\infty} x_{j_k}^*
	 = x^*.
\end{equation}	

We prove the boundedness of the dual subsequence by contradiction.
	Suppose that $\{y_{j_k}^*\}$ is unbounded. Passing to any subsequence if necessary, we have $\lim_{k\rightarrow \infty}\gnorm{y_{j_k}^*}{1}{}=\infty$. In view of the KKT condition,  we have
	\begin{equation}
		\psi_0(x_{j_k}^*)+(y_{j_k}^*)^T\psi(x_{j_k}^*) \le \psi_0(x)+(y_{j_k}^*)^T\psi(x) + 2(\mu_0+\mu^Ty_{j_k}^*) [W(x,x_{j_k-1}) - W(x_{j_k}^*, x_{j_k-1})],\label{subprob-bound}
	\end{equation}
for any $ x \in X$. Let $v_{j_k}=y_{j_k}^*/\gnorm{y_{j_k}^*}{1}{}$, then $\gnorm{v_{j_k}}{1}{}=1$, hence $\{v_{j_k}\}$ must have a convergent subsequence. Without loss of generality, we assume $\lim_{k\rightarrow \infty} v_{j_k}=v^*$.
Let us divide both sides of \eqref{subprob-bound} by $\gnorm{y_{j_k}^*}{1}{}$ and take $k\rightarrow \infty$. 
In view of (\ref{eq:ipp-mid-02}) and $\lim_{k\rightarrow\infty}1/\|y_{j_k}^*\|_1=0$, we have
\begin{equation}
	(v^*)^T\psi(x^*)=\lim_{k\rightarrow \infty}({v^*})^T\psi(x_{j_k}^*) \le ({v^*})^T\psi(x) + 2(\mu^Tv^*) W(x, x^*), \quad \forall x \in X, \label{subprob-bound2}
\end{equation}
which means that $x^*$ is optimal for minimizing the right side of (\ref{subprob-bound2}).
Therefore, the first-order  condition implies that
\begin{equation}
d\paran*{\tsum_{i=1}^m \pgrad\psi_{i}(x^*)v^\sbr{i} +N_X(x^*), \zero}=0. \label{dist-xstar}
\end{equation}
Let $\AA(x^*)$ be the set of active constraints at $x^*$. 
By this definition, for any $i \notin \AA(x^*)$, we have $\psi_{i}(x^*)<0$. 
Since $\psi_{i}$ is continuous and  $\|x_{j_k}^*-x_{j_{k}-1}\|^2$ converges to $0$, there exists $k_0$ such that for all $k>k_0$, 
we have $\psi_i(x_{j_k}^*; x_{j_k-1}) < 0$. Hence, according to the KKT complementary slackness condition for the subproblem, $y_{j_k}^\sbr{i}=0$ for $k > k_0$. Taking $k\rightarrow \infty$ 
we obtain $v^{*\br{i}}=0$ for any $i \notin \AA(x^*)$. 
So we can rewrite the equation \eqref{dist-xstar} as 
\[ d\paran*{\tsum_{i \in \AA(x^*)} \pgrad\psi_{i}(x^*)v^\sbr{i} +N_X(x^*), \zero}=0. \]
Let $\psi'_i(x^*) \in \pgrad \psi_{i}(x^*)$ for  $i \in [m]$ and $u \in N_X(x^*)$ be such that $u + \tsum_{i\in\AA(x^*)} {\psi'_i}(x^*)v^\sbr{i} = \zero$. {Let $z$ be the direction vector defined in MFCQ~\eqref{eq:mangafromo-2}.} Then,
\begin{align*}
0 = z^Tu + \tsum_{i \in \AA(x^*)}v^\sbr{i} z^T\psi'_i(x^*) &\le\tsum_{i \in \AA(x^*)}v^\sbr{i} z^T\psi'_i(x^*) \\
& \le \tsum_{i \in \AA(x^*)}v^\sbr{i} \max_{v \in \pgrad \psi_{i}(x^*)}z^Tv < 0,
\end{align*} 
where the first inequality follows since $z \in -N_X^*(x^*)$ and $u \in N_X(x^*)$ implies $z^Tu \le 0$, 
the second inequality follows due to the fact that $v^\sbr{i} \ge 0$ and $\psi'_i(x^*) \in \pgrad \psi_{i}(x^*)$ and
the last strict inequality follows from MFCQ and $v^\sbr{i} > 0$ for at least one $i \in \AA(x^*)$. 
Hence we obtain a contradiction and conclude that  $\{y_{j_k}^*\}$ is a bounded sequence.
Since $\{y_{j_k}^*\}$ is bounded, it must have a limit point $y^*$.   Passing to any subsequence if necessary, we have $\lim_{k\rightarrow \infty} y_{j_k}^*=y^*$. 

To prove that $(x^*, y^*)$ satisfies the KKT condition, we first show that $(x^*, y^*)$ satisfies complementary slackness. 
Applying Lemma \ref{prop:strong-cvx} with $x=x_{k-1}$ and replacing $(x_k, y_k)$ by $(x_k^*, y_k^*)$, we have
\begin{equation}
\label{eq:int_rel128}
\psi_0(x_{k-1})-\psi_0(x_k^*) \ge 2\mu_0 W(x_k^*, x_{k-1})+(\mu_0+\mu^Ty_k^*)\,W(x_{k-1}, x_k^*).
\end{equation}
Moreover, the KKT condition for the subproblem (\ref{inner-convex-prob}) and the assumption (\ref{wxy-bound}) imply that
\begin{equation}
	-\tsum_{i=1}^m y_k^\sbr{i}\psi_i(x_k^*) = 2 (\mu^T y_k^*) W(x_k^*, x_{k-1}) \le 2L_\omega (\mu^T y_k^*)\,W(x_{k-1}, x_k^*). \label{eq:ipp-mid-03}
\end{equation}
Combining (\ref{eq:int_rel128}) and (\ref{eq:ipp-mid-03}) and taking the limit $k\rightarrow \infty$, we have
\begin{align*}
	0 & \le -\lim_{k\rightarrow \infty} \tsum_{i=1}^m y_k^\sbr{i}\psi_{i}(x_k^*) \\
	& \le \lim_{k\rightarrow \infty} 2L_\omega \big[\psi_0(x_{k-1})-\psi_0(x_k^*)\big] \\
	& \le \lim_{k\rightarrow \infty} 2L_\omega \big[\psi_0(x_{k-1},x_{k-1})-\psi_0(x_k^*,x_{k-1})+2\mu_0 W(x_k^*, x_{k-1})\big] \\
	& \le \lim_{k\rightarrow \infty} 2L_\omega \big[ M_0 \|x_{k-1}-x_k^*\|+2\mu_0 W(x_k^*, x_{k-1})\big]\\
	& = 0,
\end{align*}
where the last equality is implied by Proposition~\ref{prop:ipp-error-assum} and (\ref{wxy-bound}). 
Hence we have 
\begin{equation}\label{eq:ipp-mid-04}
\lim_{k\rightarrow \infty} y_k^\sbr{i}\psi_{i}(x_k^*)=0, \tab i=1,2,...,m.
\end{equation}
Passing to the subsequence $(x_{j_k}, y_{j_k}^*)$ that converges to $(x^*, y^*)$,
we conclude from (\ref{eq:ipp-mid-04}) that
 \begin{equation}
 y^\sbr{i}\psi_{i}(x^*)=0, \tab i=1,2,...,m. \label{eq:ipp-mid-06}
 \end{equation}

Next, we check the stationarity condition. Since $(x_{j_k}, y_{j_k}^*)$ converges to $(x^*, y^*)$, taking  $k\rightarrow\infty$ in (\ref{subprob-bound}) yields
	\begin{equation}
		\psi_0(x^*)+(y^*)^T\psi(x^*) \le \psi_0(x)+(y^*)^T\psi(x) + 2(\mu_0+\mu^Ty^*) W(x,x^*), \label{subprob-bound-2}
	\end{equation}
implying that $x^*$ minimizes the function $\psi_0(x)+(y^*)^T\psi(x)+2(\mu_0+\mu^Ty^*) W(x,x^*)$ over $x\in X$. In other words, we have
\begin{equation}\label{eq:ipp-mid-05}
0\in N_X(x^*) + \partial \psi_0(x^*) + \tsum_{i=1}^m{y}^\sbr{i}\partial\psi_{i}(x^*).
\end{equation} 
In view of (\ref{eq:ipp-mid-06}) and (\ref{eq:ipp-mid-05}),  we complete the proof.
\end{proofx}

The following theorem shows the asymptotic convergence and 
the rate of convergence to stationarity of all the limit points of $\{x_k\}$. An ingredient vital to the latter result is the uniform boundedness of the dual variables under Strong MFCQ.
\begin{theorem}\label{thm:uniform-bound-and-rate}
Suppose that  all the assumptions in Proposition~\ref{prop:ipp-error-assum} and Assumption~\ref{ass:strong-mangafromo} hold. Then,
\begin{enumerate}
\item [a)] all the limit points of the sequence $\{x_k\}$ are critical KKT points;
\item [b)] the whole sequence $\{y_k^*\}$ is uniformly bounded, namely, there exists some constant $B>0$  that $\|y_k^*\|_1 \le B$, for $k=1,2,3,..$;
\item [c)]  after $K$ iterations, the solution sequence contains an  $(\varepsilon_K, \wb{\varepsilon}_K)$-KKT point  with 
\begin{align}
\varepsilon_K &= \max\big\{2L_\omega, 8L_\omega^2(\mu_0 + \|\mu\|_\infty B)\big\}\, \frac{\psi_0(x_0)-\psi_0^*}{K}, \label{eq:ipp-epsi-k}\\
\wb{\varepsilon}_K &= \min\Big\{\frac{\psi_0(x_0)-\psi_0^*}{{\wt{M}}^2 K^2},\frac{1}{2\mu_0 K} \Big\}\, \big[\psi_0(x_0)-\psi_0^*\big].\label{eq:ipp-epsi-bar-k}
\end{align}
where $\wt{M}=\max_{0\le i\le m} \frac{\mu_0}{\mu_i}M_i$. Moreover, if  Algorithm~\ref{alg:inexact-pp} generates exact solutions $x_k=x_k^*$, i.e., it is reduced to Algorithm~\ref{alg:exact-pp}, then the solution sequence contains an $\varepsilon_K$-KKT point.
\end{enumerate}
\end{theorem}

\begin{proofx} Part a): A natural consequence of Assumption \ref{ass:strong-mangafromo} is that every limit point of $\{x_k\}$ satisfies MFCQ (Definition \ref{eq:mangafromo}). Then, applying Theorem~\ref{thm:ipp-subseq-bound}, we immediately get part a).

Part b):	We show the boundedness of $\{y_k^*\}$ by contradiction. Suppose that the sequence is unbounded, then there exists a subsequence $\{j_k\}$ such that 
\begin{equation}
\lim_{k\rightarrow \infty}\|y_{j_k}^*\|=\infty, \label{eq:yjk-diverge}
\end{equation}	
Since $\mathcal{X}$ is compact and $\{x_{j_k}\}$ is bounded, there exists a limit point $x^*$ and a subsequence $\{x_{i_k}\}\subseteq \{x_{j_k}\}$ that $ \lim_{k\rightarrow\infty}x_{i_k}=x^*$. Due to Assumption \ref{ass:strong-mangafromo}, we obtain that $x^*$ satisfies MFCQ.
However, according to Theorem~\ref{thm:ipp-subseq-bound}, when $x^*$ satisfies MFCQ,  $\|y_{i_k}^*\|_1$ must be bounded, thereby leading to a contradiction to (\ref{eq:yjk-diverge}). This completes the proof of the existence of a constant $B>0$ such that 
\[\|y_k^*\|_1\le B \tab  k=1,2,...\]

Part c): Next we establish the rate of convergence to KKT condition.
Due to the optimality of $x^\ast_k$ in subproblem~\eqref{inner-convex-prob}, we have 
\[d \paran[\big]{\pgrad \psi_0(x^\ast_k; x_{k-1}) + \tsum_{i=1}^{m}y_{k}^\sbr{i}\pgrad \psi_i(x_k^\ast; x_{k-1}) +N_X(x_{k}^*), \zero} \ni 0.\]
Plugging the definition of $\pgrad \psi_0(; x_{k-1})$ and  $\pgrad \psi_{i}(; x_{k-1}), i\in [m],$ into the above inequality yields
\begin{equation}\label{ipp-kkt}
	d\paran[\big]{\pgrad \psi_0(x^\ast_k)  + \tsum_{i=1}^{m}y_k^{\sbr{i}}\pgrad \psi_{i}(x_k^\ast) +2\paran{\mu_0 +\mu^Ty_k^*} (\grad\omega(x_k^*)-\grad\omega(x_{k-1})) + N_X(x_k^\ast), \zero}  = 0.
\end{equation}
It follows that
\begin{align}
& d\paran{\pgrad \psi_0(x^\ast_k)  + \tsum_{i=1}^{m}y_k^{\sbr{i}}\pgrad \psi_{i}(x_k^\ast) + N_X(x_k^\ast), \zero}^2 \nonumber \\
\le{}&  4\paran{\mu_0 + \mu^T y_k^\ast}^2 \gnorm{\grad\omega(x_k^*)-\grad\omega(x_{k-1})}{}{2} \nonumber \\
\le{}&  8 L_\omega^2 \paran{\mu_0 + \mu^T y_k^\ast}^2 W(x_{k-1}, x^*_k)\nonumber \\
\le{}&  8L_\omega^2(\mu_0 + \|\mu\|_\infty B) \bracket*{\psi_0(x_{k-1})-\psi_0(x_k)}, \label{eq:ipp-stationary-bound-1}
\end{align}
where the second inequality uses the smoothness and strong convexity of $\omega(x)$, and the last inequality follows from \eqref{ipp:fk-diff} and $\mu_0+\mu^Ty_k^* \le \mu_0+\|\mu\|_\infty B$. 
In addition, by complementary slackness and \eqref{ipp:fk-diff} we have
\begin{align}
\tsum_{i=1}^m \abs{y_k^\sbr{i}\psi_{i}(x_k^*)}&=2(\mu^Ty_k^*) W(x_k^*, x_{k-1}) 
\le 2L_\omega (\mu^Ty_k^*) W(x_{k-1}, x_k^*) \nonumber\\
& \le 2 L_\omega \bracket*{\psi_0(x_{k-1})-\psi_0(x_k)}.\label{eq:ipp-comp-slack-bound-1}	
\end{align}
Furthermore, by the distance contraction property \eqref{ipp:dist-bound-const}, \eqref{ipp:dist-bound-obj} and relation \eqref{ipp:fk-diff}, we have  
\begin{equation}
	\|x_k-x_k^*\|^2\le  \frac{1}{4}\|x_{k-1}-x_k^*\|^2 \le \tfrac{1}{2\mu_0} [\psi_0(x_{k-1})-\psi_0(x_k)], \label{eq:ipp-dist-square-bound-1}
\end{equation}
and 
\begin{align}
\|x_k-x_k^*\| & \le \min \Big\{\frac{\mu_0}{2M_0}, \min_{1\le i\le m}\frac{\mu_i}{4M_i} \Big\} \|x_{k-1}-x_k^*\|^2\nonumber \\
 & \le \min \Big\{\frac{1}{M_0}, \min_{1\le i\le m}\frac{\mu_i}{2\mu_0 M_i} \Big\} [\psi_0(x_{k-1})-\psi_0(x_k)]\nonumber \\
  & \le \frac{1}{\wt{M}} [\psi_0(x_{k-1})-\psi_0(x_k)]. \label{eq:ipp-dist-square-bound-2}
\end{align}
Noting that $\wh{k}=\argmin_{1\le k\le K} {\psi_0(x_{k-1}) - \psi_0(x_k)}$,  we have 
\begin{equation}
\min_{1\le k \le K} [\psi_0(x_{k-1})-\psi_0(x_k)]\le \frac{1}{K} \sum_{k=1}^K[\psi_0(x_{k-1})-\psi_0(x_k)]\le \frac{\psi_0(x_0)-\psi_0^*}{K}. \label{eq:bound-func-diff}	
\end{equation}
Let $x^{\hat{k}}$ be the desired approximate KKT point. Combining (\ref{eq:ipp-stationary-bound-1}), (\ref{eq:ipp-comp-slack-bound-1}) and (\ref{eq:bound-func-diff}) we obtain (\ref{eq:ipp-epsi-k}), 
and combining (\ref{eq:ipp-dist-square-bound-1}), (\ref{eq:ipp-dist-square-bound-2}) and (\ref{eq:bound-func-diff}) gives (\ref{eq:ipp-epsi-bar-k}). 
Finally, noting that an $(\epsilon,\delta)$-KKT point is an $\epsilon$-KKT point when $\delta=0$, we immediately see that the exact proximal point method generates an $\varepsilon_K$-KKT solution.

\end{proofx}
\begin{remark}
We leave several comments about the above convergence results.
First, imposing MFCQ type assumption is quite common in the traditional nonlinear optimization for justifying the search of KKT solutions (see \cite{bertsekasnonlinear}). By means of MFCQ, we  not only ensure asymptotic convergence to the KKT solution but also obtain a non-asymptotic convergence rate result. To the best of our knowledge, this appears to be the first complexity result and efficiency analysis of proximal point method under the MFCQ type assumption
 Second,  while Assumption~\ref{ass:strong-mangafromo} requires MFCQ on the whole feasible domain, we only need every limit point of $\{x_k\}$ to satisfy MFCQ throughout the proof. Strong MFCQ is an algorithm independent sufficient condition to achieve MFCQ on every limit point. 
 Third, it should be noted that while Algorithm~\ref{alg:inexact-pp} generates approximate KKT points that are strictly feasible, feasibility is actually not essential for the underlying optimality measure in Definition~\ref{defi:kkt}. The next subsection will describe inexact proximal point method with possibly infeasible solution sequence. (Also see Remark~\ref{rem:ipp-4}).
\end{remark}

\begin{remark}\label{rem:cvx_case_exact}
	All the results in this section can be easily extended to the case when $\psi_{i}, i\in[m]$ are convex functions. In that case, we can replace $\mu_i=0$ for all $i \in [m]$. 
	As a result, the subproblem \eqref{inner-convex-prob} of Algorithm~\ref{alg:exact-pp} becomes
	\begin{equation}\label{inner-conv-prob-conv-algoirhtm}
	\begin{split}
	\min_{x \in X} \tab &\psi_0(x; x_{k-1})\\[-2mm]
	\text{s.t.} \tab &\psi_{i}(x) \le 0, \tab i \in[m].
	\end{split}
	\end{equation} 
	Hence, constraints are fixed for all iterations. This implies that we do not need \eqref{ipp:dist-bound-const} for ensuring the strict feasibility of iterates  for every subproblem \eqref{inner-conv-prob-conv-algoirhtm}. It is given for free provided that there exists some (possibly unknown) strictly feasible solution for problem \eqref{inner-conv-prob-conv-algoirhtm}. However, we still need \eqref{ipp:dist-bound-obj} for ensuring the convergence result in \eqref{eq:ipp-dist-conv-zero}. After this modification in Proposition \ref{prop:ipp-error-assum}, {it would be easy to obtain results similar to those in Theorem~\ref{thm:ipp-subseq-bound} and Theorem~\ref{thm:uniform-bound-and-rate}.}  
\end{remark}
\begin{remark}
It is interesting to compare the complexity of proximal point iterations in Theorem~\ref{thm:uniform-bound-and-rate}  with  that of proximal point  for unconstrained nonconvex optimization (e.g. \cite{lan2018accelerated}). A quantitative difference is that for functional constrained problem the complexity bound has an unknown parameter $B$ which could possibly depend on the solution path, while for unconstrained problem, the parameters in the complexity bound usually globally depend on the initial solution, optimality gap or distance to the optimal solutions, and hence are easier to estimate. This distinction appears to indicate some substantial difficulty in developing complexity analysis for nonconvex optimization with nonconvex constraints.  	
\end{remark}
\begin{remark}\label{rem:ipp-4}
The inexactness criteria (\ref{ipp:dist-bound-const}) and (\ref{ipp:dist-bound-obj}) describe convergence to the optimal solution for each convex subproblem. These criteria can be satisfied eventually if we employ ConEx for the subproblems, thanks to the last iterate convergence (\ref{eq:bounding_last_iterate_cor}).
However, {it is still difficult to estimate the total complexity, since ConEx does not guarantee convergence purely in terms of initial distance to the optimal solution. This difficulty holds for deterministic as well as stochastic cases.} 
Due to these issues, it is desirable to exploit more practical scenarios where proximal point methods can be combined with first order methods (such as ConEx) for large-scale and stochastic optimization.
\end{remark}

\subsection{Inexact proximal point under the strong feasibility assumption}\label{sec:inexact-strong-feas}
In this section, we present a variant of the inexact proximal point in which the subproblem
is approximately solved by ConEx (see Algorithm~\ref{alg:inexact-pp-2}).
To understand our motivation, consider the case when the objective function is given in the form 
of $f(x)=\Ebb_\xi\bracket*{F(x,\xi)}$,
where $F(x,\xi)$ is a stochastic function on some random variable $\xi$ and is possibly nonconvex with respect to the parameter $x$.
Consequently, the objective function in the subproblem \eqref{inner-convex-prob} is given by $\Ebb_\xi\bracket*{F(x,\xi)} + \mu_0 \gnorm{x-\wb{x}}{}{2}$.
As discussed in the previous section, stochastic
optimization algorithms (such as ConEx) for solving this type of problem 
will exhibit
a sublinear rate of convergence in expectation, 
which does not fit the inexactness criterion raised in Proposition~\ref{prop:ipp-error-assum}.
To alleviate this issue, we propose a new assumption as follows.

\begin{assumption}[Strong feasibility]\label{ass:strong-feasi} There exists $\wb{x} \in X$ such that
	\begin{equation}\label{eq:strict-feasibility}
	\psi_i(\wb{x}) \le -2\mu_i D_X^2, \tab i =1, \dots, m,
	\end{equation} 
	where $D_X$ is defined in \eqref{eq:diameter}. 
\end{assumption}

The following proposition states that the KKT condition in \eqref{defi:kkt} is a first-order necessary optimality condition when Assumption \ref{ass:strong-feasi} holds. This result is similar to Proposition \ref{thm:MFCQ} where MFCQ is replaced by strong feasibility.  We defer the proof of this result to the appendix.
\begin{proposition}\label{prop:strong-feas-suff-cond}
	Let $x^*$ be a local optimal solution of the problem~\eqref{main-prob}. If Assumption \ref{ass:strong-feasi} is satisfied then there exists $y^\sbr{i} \ge 0,\ i \in [m]$ such that 
		\eqnok{eq:criterion-main-prob} holds.
\end{proposition}
\begin{algorithm}[h]
	\caption{Inexact Constrained Proximal Point Algorithm with ConEx}
	\label{alg:inexact-pp-2}
	\begin{algorithmic}[1]
		\STATE Input $x_0$
		\FOR { $k = 1, \dots, K$}
		\STATE $x_k \gets $ a (stochastic) approximate solution of subproblem \eqref{inner-convex-prob} by ConEx. 
		\ENDFOR
		\STATE Choose $\wh{k}$ uniformly at random from $\braces{1,2,...,K}$.
		\RETURN $x_{\wh{k}}$.
	\end{algorithmic}
\end{algorithm}

Note that \eqref{eq:strict-feasibility} is a local and a verifiable condition. Moreover, we will show in Lemma \ref{lem:strict-feasible-bound} that it provides a computable uniform bound $B$ on the dual solutions. 
While it appears that \eqref{eq:strict-feasibility} is quite distinct from MFCQ  (Definition~\ref{eq:mangafromo}), we would like to point out certain similarities between these two conditions. To understand this connection better, let us assume that $\psi_{i}$ is smooth function. Then, for all $x \in X$, we have 
\begin{align*}
	\psi_{i}(\wb{x}) &\ge \psi_{i}(x) + \inprod{\grad \psi_{i}(x)}{\wb{x}-x} - \tfrac{\mu_i}{2}\gnorm{\wb{x}-x}{}{2} \\
	\Rightarrow \inprod{\grad \psi_{i}(x)}{x-\wb{x}} &\ge \psi_{i}(x) - \psi_{i}(\wb{x}) -\tfrac{\mu_i}{2} \gnorm{\wb{x}-x}{}{2},
\end{align*}
which implies that 
\begin{equation} \label{weak_cq}
	\inprod{\grad \psi_{i}(x)}{x-\wb{x}} \ge 0, \tab \forall x \in X \cap \{\psi_{i} \ge -\tfrac{3}{2}\mu_i D_X^2 \}.
\end{equation}
Recall that the existence of a Minty solution, $\wb{x}$, for variational inequality problem on mapping $\grad \psi_{i}$, is the following condition
\begin{equation}\label{eq:minty_sol}
	\inprod{\grad \psi_{i}(x)}{x-\wb{x}} \ge 0, \tab \forall x \in X,
\end{equation}
which is stronger than \eqref{weak_cq}. Hence $\psi$ satisfying \eqref{eq:strict-feasibility} is not necessarily quasi-convex. However, existence of Minty solution, $\wb{x}$, gives an `almost' sufficient condition for ensuring \eqref{eq:mangafromo} in the following way. Set $x = x^*$ in \eqref{eq:minty_sol}. Then we obtain that $z = \wb{x}-x^*$ satisfies \eqref{eq:mangafromo} with strict inequality replaced by nonstrict inequality. Since there is no implication from \eqref{weak_cq} to \eqref{eq:minty_sol} (in fact, the implication is in the opposite direction), so a direct comparison for the weaker among the two conditions \eqref{eq:strict-feasibility} and \eqref{eq:mangafromo-2}, can not be made as such.

Now, we state an important lemma which allows us to obtain dual boundedness under the strong feasibility assumption.
\begin{lemma}\label{lem:strict-feasible-bound}
Let $\psi_i(x;\hat{x})\coloneqq {\psi_i(x)} + 2 \mu_i W(x, \hat{x})$, $0\le i \le m$, where $\hat{x}\in X$ is  an arbitrary proximal center.
Then under Assumption~\ref{ass:strong-feasi}, the convex problem:
\begin{equation}\label{inner-convex-prob-v2} 
			\begin{split} 
			 \min_{x \in X} \tab & {\psi_0(x; \hat{x})}\\
			\text{s.t.} \  \tab  &{\psi_i(x; \hat{x})} \le 0, \tab i \in [m].
			\end{split}
		\end{equation}
is always feasible, there exists a solution $(x^+, y^+)$ satisfying the KKT-condition, and the  variable $y^+$ is uniformly bounded by:
\begin{equation}
	\|y^+\|_1 \le B :=\tfrac{\psi_0(\wb{x})-\psi_0^* + \mu_0 D_X^2}{\mu_{\min} D_X^2}, \label{ipp-bound-yplus}
	\end{equation}
	where $\mu_{\min}=\min_{1\le i\le m}\mu_i$.
\end{lemma}
\begin{proofx}
	Based on Assumption~\ref{ass:strong-feasi}, we have from subproblem~(\ref{inner-convex-prob-v2}) that 
	\begin{equation}\label{eq:strong-feasi-mid-01}
		\psi_{i}(\wb{x}, \hat{x}) \le -2\mu_i D_X^2 + 2\mu_i W(\wb{x}, \hat{x})\le -\mu_i D_X^2 < 0.
	\end{equation}
	Then the existence of KKT solution $(x^+, y^+)$  immediately follows from Slater's condition. 
	
	Moreover, notice that problem~(\ref{inner-convex-prob-v2}) differs from problem~(\ref{inner-convex-prob}) only at the choice of proximal center.
	Therefore, the same argument to prove Lemma~\ref{prop:strong-cvx} implies that
	\begin{equation}
		\psi_0(x,\hat{x})-\psi_0(x^+;\hat{x}) +\langle y^+, \psi(x;\hat{x})\rangle \ge (\mu_0+\mu^Ty^+) W(x, x^+), \quad x\in X.	
	\end{equation}
Let us place $x=\bar{x}$ in the above inequality. In view of the non-negativity of  $(\mu_0+\mu^Ty^+)$ and the definition of $\psi_0(x,\hat{x})$, one has 
\begin{equation} \label{eq:strong-feasi-mid-02}
	\psi_0(\wb{x})+2\mu_0 W(\wb{x}, \hat{x}) - \psi_0(x^+)-2\mu_0 W(x^+, \hat{x}) \ge \inprod{y^+}{-\psi(\wb{x}, \hat{x})}.
\end{equation}
	Combining the result (\ref{eq:strong-feasi-mid-01}) and (\ref{eq:strong-feasi-mid-02}) together, we  deduce	
	\begin{align*}
		\mu_{\min} \gnorm{y^+}{1}{}D_X^2 & \le (\mu^T y^+) D_X^2 \\
		& \le -\tsum_{i=1}^m y^{+\br{i}}\psi_{i}(\wb{x}, \hat{x}) \\
		&\le  \psi_0(\wb{x})-\psi_0(x^+) +\mu_0 D_X^2. 
	\end{align*}
	Finally, since $x^+$ is feasible to (\ref{inner-convex-prob-v2})  and  the feasible region of (\ref{inner-convex-prob-v2}) is a subset of the feasible region of the original problem~(\ref{main-prob}), we have $\psi_0(x^+)\ge \psi_0^*$. The result (\ref{ipp-bound-yplus}) immediately follows.
\end{proofx}

Note that in view of Lemma~\ref{lem:strict-feasible-bound}, the strong feasibility assumption ensures two key requirements for the analysis of the convergence rate of the inexact proximal point: 1) feasibility of proximal point subproblems  and 2)
 a uniform bound on the optimal dual variable $y_k^*$. To see the second part,  placing $x^{k-1}=\hat{x}$ in (\ref{inner-convex-prob-v2}), we immediately obtain the the bound 
\begin{equation}\label{eq:bound-y-strong-feas}
	\gnorm{y_k^*}{1}{} \le B=\tfrac{\psi_0(\wb{x})-\psi_0^* + \mu_0 D_X^2}{\mu_{\min} D_X^2},\tab k=1,2,\ldots.
\end{equation}
In this case, we only need to assume that $x_k$ satisfies the functional optimality gap and constraint violation  given in Definition~\ref{define-apprx-opt} as compared to \eqref{ipp:dist-bound-const} and \eqref{ipp:dist-bound-obj} in the previous section.

We develop the convergence result of Algorithm~\ref{alg:inexact-pp-2} in the following theorem.
\begin{theorem}\label{thm:inexact-pp}
In Algorithm~\ref{alg:inexact-pp-2}, suppose that Assumption~\ref{ass:strong-feasi} holds such that $\gnorm{y_k^*}{1}{} \le B$ and $B$ is given in Lemma~\ref{lem:strict-feasible-bound}. 
Moreover, assume that the definition of $x_k$ in Algorithm~\ref{alg:inexact-pp-2}  is given by
\begin{equation} \label{def_x_k_spec}
x_k \gets  \mbox{ a stochastic} (\delta_k, \wb{\delta}_k)\mbox{-optimal solution (c.f. Definition \ref{define-apprx-opt}) of \eqref{inner-convex-prob}.}
\end{equation} 
Then $x_\wh{k}$ is a stochastic $(\varepsilon_K, \wb\varepsilon_K)$-KKT point of Problem \eqref{main-prob} with 
\begin{equation} \label{eq:inexact_KKT_thm}
  	\varepsilon_K=\max\braces*{2L_\omega, 8L_\omega^2 \paran*{\mu_0+\mu_{\max}B}}\tfrac{\Gamma_K}{K},\quad\text{and } \wb{\varepsilon}_K= \tfrac{2}{\mu_0K}\Omega_K,
\end{equation}
where 
$\mu_{\max} := \max_{i \in [m]} \mu_i$, 
$\Gamma_K :=\Delta_{\psi_0} + B\wb{\Delta}_0 + \Omega_K$, $\Delta_{\psi_0} := \psi_0(x_0) - \min_{x \in X}\psi_0(x)$, $\wb{\Delta}_0=\|[\psi(x_0)]_+\|_2$ and $\Omega_K= \tsum_{k=1}^{K}\delta_k + B\tsum_{k=1}^{K}\wb{\delta}_{k}$.
\end{theorem}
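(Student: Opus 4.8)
The plan is to track a single telescoping quantity — the decrease in $\psi_0$ across one inexact proximal step — and then chain together three families of estimates already developed in the exact case, with the inexactness errors $\delta_k,\wb\delta_k$ carried through. First I would record the basic one-step descent inequality in the inexact setting. Applying Lemma~\ref{prop:strong-cvx} to the $k$-th subproblem at the exact saddle pair $(x_k^*,y_k^*)$ with $x=x_{k-1}$, and using that $x_k$ is a stochastic $(\delta_k,\wb\delta_k)$-optimal solution of \eqref{inner-convex-prob} — so that $\psi_0(x_k;x_{k-1}) - \psi_0(x_k^*;x_{k-1}) \le \delta_k$ in expectation and $\gnorm{\relu{\psi(x_k;x_{k-1})}}{2}{}\le\wb\delta_k$ — together with the bound $\gnorm{y_k^*}{1}{}\le B$ from Assumption~\ref{ass:uniform-bound}, I would derive the analog of \eqref{ipp:fk-diff}, namely
\[
\Ebb[\psi_0(x_k)] + \mu_0\,\Ebb[W(x_k,x_{k-1})] + (\mu_0+\mu^Ty_k^*)\,\Ebb[W(x_{k-1},x_k^*)] \le \psi_0(x_{k-1}) + \delta_k + B\wb\delta_k.
\]
Summing over $k=1,\dots,K$ and using $\psi_0(x_K)\ge \min_{x\in X}\psi_0(x)$ gives $\sum_{k=1}^K[\mu_0 W(x_k,x_{k-1}) + (\mu_0+\mu^Ty_k^*)W(x_{k-1},x_k^*)] \le \Delta_{\psi_0} + \Omega_K$ in expectation, where $\Omega_K = \sum\delta_k + B\sum\wb\delta_k$; more to the point, it yields $\sum_{k=1}^K \Ebb[\psi_0(x_{k-1}) - \psi_0(x_k)] \le \Delta_{\psi_0} + \Omega_K = \Gamma_K - B\wb\Delta_0$, which after adding the initial-infeasibility slack $B\wb\Delta_0$ gives the bookkeeping constant $\Gamma_K$ appearing in \eqref{eq:inexact_KKT_thm}. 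Since $\wh k$ is chosen uniformly at random from $\{1,\dots,K\}$, $\Ebb[\psi_0(x_{\wh k-1})-\psi_0(x_{\wh k})] \le \Gamma_K/K$, and similarly $\Ebb[W(x_{\wh k-1},x_{\wh k}^*)] \le (\Delta_{\psi_0}+\Omega_K)/(\mu_0 K)$.

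Next I would estimate the three quantities in the approximate-KKT Definition~\ref{def:approx_kkt} at $x_{\wh k}$, reusing the computations from the proof of Theorem~\ref{thm:exact_conv} but now passing from the exact subproblem solution $x_k^*$ to the inexact $x_k$ using the Lipschitz constants $M_i$. For the stationarity residual: the KKT condition \eqref{ipp-kkt} for the $k$-th subproblem gives $d(\pgrad\psi_0(x_k^*)+\sum_i y_k^{*(i)}\pgrad\psi_i(x_k^*)+N_X(x_k^*),\zero)^2 \le 4(\mu_0+\mu^Ty_k^*)^2\gnorm{\grad\omega(x_k^*)-\grad\omega(x_{k-1})}{}{2} \le 8L_\omega^2(\mu_0+\mu_{\max}B)(\mu_0+\mu^Ty_k^*)W(x_{k-1},x_k^*)$ via \eqref{wxy-bound}, which by the one-step descent is $\le 8L_\omega^2(\mu_0+\mu_{\max}B)[\psi_0(x_{k-1})-\psi_0(x_k)]$. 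For the complementarity term: from \eqref{pp:bound-comp-slack}, $\sum_i|y_k^{*(i)}\psi_i(x_k)| \le 2L_\omega(\mu^Ty_k^*)W(x_{k-1},x_k^*) + (M^Ty_k^*)\gnorm{x_k-x_k^*}{}{} \le 2L_\omega[\psi_0(x_{k-1})-\psi_0(x_k)] + (\text{error term in }\gnorm{x_k-x_k^*}{}{})$. For the proximity term: $\gnorm{x_k-x_{\wh k}}{}{2}$ — wait, here $\wh x = x_{\wh k}$ itself so the $\delta$-slot is controlled differently; in fact the relevant displacement is $\gnorm{x_{\wh k}-x_{\wh k}^*}{}{2}$, which by $(\delta_k,\wb\delta_k)$-optimality and strong convexity of $\psi_0(\cdot;x_{k-1})$ is bounded by $O(\delta_k/\mu_0)$, and combined with $\gnorm{x_{k-1}-x_k^*}{}{2}\le \tfrac{2}{\mu_0}[\psi_0(x_{k-1})-\psi_0(x_k)]$ gives the $\wb\varepsilon_K$ bound after averaging. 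Taking expectations, applying the uniform random choice of $\wh k$, and collecting the multiplicative constants $\max\{2L_\omega, 8L_\omega^2(\mu_0+\mu_{\max}B)\}$ yields \eqref{eq:inexact_KKT_thm}. Finally I would invoke Proposition~\ref{prop:ipp-error-assum} (or, in the verifiable regime, Lemma~\ref{lem:strict-feasible-bound} via condition \eqref{eq:strict-feasibility}) to guarantee $\psi(x_{\wh k})\le\zero$ and $y_{\wh k}^*\ge\zero$, so that $x_{\wh k}$ genuinely qualifies as a stochastic $(\varepsilon_K,\wb\varepsilon_K)$-KKT point.

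The main obstacle I anticipate is the careful propagation of the inexactness from the $(\delta_k,\wb\delta_k)$-optimality criterion (phrased in terms of functional optimality gap and constraint violation, per Definition~\ref{define-apprx-opt}) into the quantities that naturally live in terms of the \emph{distance} $\gnorm{x_k-x_k^*}{}{}$ — because the estimates in \eqref{pp:bound-comp-slack} and in the stationarity bound involve $M_i\gnorm{x_k-x_k^*}{}{}$, not the functional gap directly. Converting between the two uses strong convexity of the subproblem objective relative to $W$, but one must be cautious: a $\delta_k$-optimal point need not be within $\sqrt{2\delta_k/\mu_0}$ of $x_k^*$ if it is infeasible for the subproblem, so the constraint-violation slack $\wb\delta_k$ must be handled in tandem, which is exactly why $\Omega_K$ bundles both $\sum\delta_k$ and $B\sum\wb\delta_k$. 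A secondary bookkeeping subtlety is ensuring all the inequalities are stated in expectation with respect to the internal randomness of the subproblem solver and the independent uniform draw of $\wh k$, and that Assumption~\ref{ass:uniform-bound} is invoked consistently to replace every occurrence of $\mu^Ty_k^*$ or $\gnorm{y_k^*}{1}{}$ by its bound $B$ (resp. $\mu_{\max}B$); none of this is deep, but it is where errors would creep in.
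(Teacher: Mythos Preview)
Your overall strategy---derive a one-step descent from Lemma~\ref{prop:strong-cvx}, telescope, and average via the uniform random index~$\wh{k}$---matches the paper. But there is a genuine inconsistency in where you verify the KKT conditions that would prevent you from reaching the stated bounds.

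Definition~\ref{def:approx_kkt} requires a \emph{single} certificate pair $(x,y)$ at which feasibility, complementarity, and stationarity all hold, with $\gnorm{x-\wh{x}}{}{2}$ small. You take $\wh{x}=x_{\wh{k}}$, check stationarity at $x_{\wh{k}}^*$, check proximity as $\gnorm{x_{\wh{k}}-x_{\wh{k}}^*}{}{2}$, but then check complementarity at $x_{\wh{k}}$ via \eqref{pp:bound-comp-slack}. That mixes two certificate points and, worse, drags in the Lipschitz error $(M^Ty_k^*)\gnorm{x_k-x_k^*}{}{}$---the theorem statement has no $M_i$'s in it. The paper's fix is simply to take the certificate point to be $x_{\wh{k}}^*$ throughout. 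Complementarity at $x_k^*$ comes directly from the subproblem KKT condition $y_k^{*(i)}\psi_i(x_k^*;x_{k-1})=0$, giving $\tsum_i|y_k^{*(i)}\psi_i(x_k^*)|=2(\mu^Ty_k^*)W(x_k^*,x_{k-1})\le 2L_\omega(\mu^Ty_k^*)W(x_{k-1},x_k^*)$ with no extra error. Feasibility $\psi(x_k^*)\le\zero$ is immediate from $\psi_i(x_k^*)\le\psi_i(x_k^*;x_{k-1})\le0$, so you do not need Proposition~\ref{prop:ipp-error-assum} or condition \eqref{eq:strict-feasibility} here at all.

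Your anticipated obstacle---converting the functional $(\delta_k,\wb\delta_k)$-criterion into a distance bound---then dissolves. The paper sets $x=x_k$ directly in Lemma~\ref{prop:strong-cvx} (relation~\eqref{ipp:middle-result}) to get $(\mu_0+\mu^Ty_k^*)W(x_k,x_k^*)\le\Delta_k+\gnorm{y_k^*}{2}{}\wb\Delta_k\le\Delta_k+B\wb\Delta_k$, which after averaging yields $\wb\varepsilon_K=\tfrac{2}{\mu_0K}\Omega_K$. One more bookkeeping point: in your one-step descent the constraint term at $x_{k-1}$ should be bounded by $B\wb\Delta_{k-1}$, not $B\wb\delta_k$, because $\psi_i(x_{k-1};x_{k-1})=\psi_i(x_{k-1})\le[\psi_i(x_{k-1};x_{k-2})]_+$; this index shift is exactly why $\Gamma_K$ carries the separate $B\wb\Delta_0$ term for the initial iterate.
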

\begin{proofx}
	Let $\Delta_k = \psi_0(x_k; x_{k-1}) -\psi_0(x_k^*; x_{k-1})$ and $\wb{\Delta}_k=\gnorm{\relu{\psi(x_k; x_{k-1})}}{2}{}$. 
	Using Definition \ref{define-apprx-opt} we have $\Ebb[\abs{\Delta_k}]\le \delta_k$ and $\Ebb[\wb{\Delta}_k] \le \wb{\delta}_k$.
	In view of Lemma \ref{prop:strong-cvx} and  the strong convexity of $\psi_0(; x_{k-1})$ and $\psi(; x_{k-1})$, we have 
	\begin{equation}\label{ipp:middle-result}
	\begin{aligned}
	\psi_0(x; x_{k-1}) + \tsum_{i=1}^{m} y_k^\sbr{i} \psi_i(x; x_{k-1}) &\ge \psi_0(x^*_k; x_{k-1}) + (\mu_0 + \mu^Ty_k^\ast) W(x, x_k^*) \\
	&= \psi_0(x_k; x_{k-1}) - \Delta_k +(\mu_0 + \mu^Ty_k^\ast)\, W(x, x_k^*)\\	
	&= \psi_0(x_k) + 2\mu_0 W(x_k, x_{k-1})-\Delta_k +(\mu_0 + \mu^Ty_k^\ast)\, W(x, x_k^*).
	\end{aligned} 
	\end{equation}
	Setting $x=x_k$ in \eqref{ipp:middle-result} yields
	\begin{align*}
\psi_0(x_k; x_{k-1}) + \tsum_{i=1}^{m} y_k^\sbr{i} \psi_i(x_k; x_{k-1}) &\ge \psi_0(x^*_k; x_{k-1}) + (\mu_0 + \mu^Ty_k^\ast) W(x_k, x_k^*)
\end{align*}
Setting $k=\wh{k}$ in the above relation and taking expectation, we have
\begin{align*}
\Ebb \bracket{\gnorm{x_{\wh{k}}-x_\wh{k}^*}{}{2}} \le 2\Ebb\bracket{W(x_\wh{k},x_\wh{k}^*) }
& \le \tfrac{2}{\mu_0K}\tsum_{k=1}^K\Ebb\bracket{\psi_0(x_{k}; x_{k-1}) - \psi_0(x_{k}^*;x_{k-1})+ \tsum_{i=1}^{m} y_k^\sbr{i} \psi_i(x_k; x_{k-1})} \\
& \le \tfrac{2}{\mu_0K}\tsum_{k=1}^K\Ebb\bracket{\psi_0(x_{k}; x_{k-1}) - \psi_0(x_{k}^*; x_{k-1}) + \tsum_{i=1}^{m} y_k^\sbr{i} [\psi_i(x_k; x_{k-1})}_+] \\
& \le  \tfrac{2}{\mu_0K}\tsum_{k=1}^K\Ebb\bracket{ \Delta_k + B\wb{\Delta}_k} \\
& \le \tfrac{2}{\mu_0K}\tsum_{k=1}^K(\delta_k+B\wb{\delta}_k).
\end{align*}
where the third inequality above is due to the Cauchy-Schwarz inequality and the boundedness of $\|y_k^*\|_2$: $\gnorm{y_k^*}{2}{} \le \gnorm{y_k^*}{1}{}\le B$.\\
Analogously, by setting $x = x_{k-1}$ in \eqref{ipp:middle-result} and noticing $\psi_0(x_{k-1}; x_{k-1})=\psi_0(x_{k-1})$ we have
\begin{equation}\label{ipp-strong-cvx-rela2}
	\begin{aligned}
		\psi_0(x_{k-1}) + B\wb{\Delta}_{k-1} 
		& \ge \psi_0(x_{k-1}; x_{k-1}) + \|y_k^*\|_2 \wb{\Delta}_{k-1}\\
		& \ge \psi_0(x_{k-1}; x_{k-1}) + \tsum_{i=1}^{m} y_k^\sbr{i}\psi_i(x_{k-1}; x_{k-1})   \\
		& \ge \psi_0(x_k^*; x_{k-1}) + (\mu_0 + \mu^Ty_k^\ast)\ W(x_{k-1}, x_k^*) \\
		& \ge \psi_0(x_k) -\Delta_k + 2\mu_0 W(x_k, x_{k-1}) +(\mu_0 + \mu^Ty_k^\ast)\ W(x_{k-1}, x_k^*).
	\end{aligned}
\end{equation}
Here the second inequality use the following property:
for $k>1$, 
	\begin{equation} 
	\tsum_{i=1}^{m} y_k^\sbr{i}\psi_i(x_{k-1}; x_{k-1}) \le \tsum_{i=1}^{m} \bracket{y_k^\sbr{i}\psi_i(x_{k-1}; x_{k-1})}_+ \le \tsum_{i=1}^{m} y_k^\sbr{i}\bracket{\psi_i(x_{k-1}; x_{k-1})}_+ \le \gnorm{y^\ast_k}{2}{} \wb{\Delta}_{k-1},
\end{equation}
and $\tsum_{i=1}^{m} y_1^\sbr{i}\psi_i(x_0; x_0)= \tsum_{i=1}^{m} y_1^\sbr{i}\psi_{i}(x_0)\le \|y_1^*\|_2 \wb{\Delta}_0.$\\
Summing up the inequality \eqref{ipp-strong-cvx-rela2} for $k = 1, \dots,  K$, we obtain 
\begin{equation}\label{ipp-sum-bound}
\begin{split}
	2\mu_0 \tsum_{k=1}^{K}W(x_k, x_{k-1}) &+\tsum_{k=1}^{K}(\mu_0 +\mu^Ty_k^*) W(x_{k-1}, x_k^*) \\
	& \le \psi_0(x_0) - \psi_0(x_K) +\tsum_{k=1}^{K}\Delta_k + B\tsum_{k=1}^{K}\wb{\Delta}_{k-1}  \\
	& \le \Delta_f + \tsum_{k=1}^{K}\Delta_k + B\tsum_{k=1}^{K}\wb{\Delta}_{k-1}, 	
\end{split}
\end{equation}
Furthermore, due to the  KKT condition for \eqref{inner-convex-prob}, we have 
\[d \paran[\big]{\pgrad \psi_0(x^\ast_k; x_{k-1}) + \tsum_{i=1}^{m}y_{k}^{\ast \br{i}}\pgrad \psi_{i}(x_k^\ast; x_{k-1}) +N_X(x_{k}^*), \zero} = 0.\]
Plugging the definition of $\pgrad \psi_0(x; x_{k-1})$ and  $\pgrad \psi_{i}(x; x_{k-1}), i\in [m],$ into the above inequality yields
\begin{equation}\label{ipp-kkt}
	d\paran[\big]{\pgrad \psi_0(x^\ast_k)  + \tsum_{i=1}^{m}y_k^{\ast \br{i}}\pgrad \psi_{i}(x_k^\ast) +2\paran{\mu_0 +\mu^Ty_k^*} (\grad\omega(x_k^*)-\grad\omega(x_{k-1})) + N_X(x_k^\ast), \zero}  = 0.
\end{equation}
Let $\wh{k}$ be the random index from $1,\dots,K$. Then, in view of \eqref{ipp-kkt}, \eqref{ipp-sum-bound} and bound on $\gnorm{y_k^*}{1}{}$, we have
\begingroup
\allowdisplaybreaks
\begin{align}
\Ebb\big[d&\paran[\big]{\pgrad \psi_0(x^\ast_\wh{k})  + \tsum_{i=1}^{m}y_\wh{k}^{\ast \br{i}}\pgrad \psi_{i}(x_\wh{k}^\ast) + N_X(x_\wh{k}^\ast) , \zero}^2 \big] \nonumber \\
&=	\tfrac{1}{K} \Ebb\braces*{\tsum_{k=1}^K d\paran{\pgrad \psi_0(x^\ast_k)  + \tsum_{i=1}^{m}y_k^{\sbr{i}}\pgrad \psi_{i}(x_k^\ast) + N_X(x_k^\ast), \zero}^2} \nonumber \\
&\le \tfrac{4}{K} \Ebb\braces*{\tsum_{k=1}^K \paran{\mu_0 + \mu^T y_k^\ast}^2 \gnorm{\grad\omega(x_k^*)-\grad\omega(x_{k-1})}{}{2}} \label{ipp:bound_grad} \\
& \le \tfrac{8L_\omega^2(\mu_0 + \mu_{\max} B)}{K} \Ebb\braces*{\tsum_{k=1}^K \paran{\mu_0 + \mu^T y_k^\ast}W(x_{k-1}, x^*_k)} \nonumber \\
& \le \tfrac{8L_\omega^2(\mu_0 + \mu_{\max} B)}{K} \bracket*{\Delta_f + \wb{\Delta}_0 + \tsum_{k=1}^{K}\delta_k + B\tsum_{k=2}^{K}\wb{\delta}_{k-1}}\nonumber\\
& \le \tfrac{8L_\omega^2(\mu_0 + \mu_{\max} B)}{K}\Gamma_K \nonumber
\end{align}
\endgroup
Moreover, using the complimentary slackness for the subproblem and the relation \eqref{ipp-sum-bound}, we have
\begin{align}
\tsum_{k=1}^K\tsum_{i=1}^m  \abs{y_k^{*\br{i}}\psi_{i}(x_k^*)} 
&=2\tsum_{k=1}^K\paran{\mu^Ty_k^*} W(x_k^*, x_{k-1}) \label{ipp:bound-cs} \\
&\le 2L_\omega \tsum_{k=1}^K\paran{\mu^Ty_k^*} W(x_{k-1}, x_k^*) \nonumber \\
& \le 2 L_\omega \bracket[\big]{\Delta_f + \tsum_{k=1}^{K}\Delta_k + B\tsum_{k=1}^{K}\wb{\Delta}_{k-1}}.	\nonumber
\end{align}
Therefore 
\[ \Ebb \bracket*{\tsum_{i=1}^m  \abs{y_\wh{k}^{*\br{i}}\psi_{i}(x_\wh{k}^*)}} = \tfrac{1}{K}\Ebb \bracket*{\tsum_{k=1}^{K}\tsum_{i=1}^m \abs{y_k^{*\br{i}}\psi_{i}(x_k^*)}} \le \tfrac{2L_\omega}{K}\Gamma_K.
\]
Hence we conclude the proof.
\end{proofx}
\begin{remark}
	We should note that when $\psi_{i}, i \in [m]$, are convex functions then we can obtain a variant of Algorithm~\ref{alg:inexact-pp-2} where $x_k$ is a (stochastic) $(\delta_k, \wb{\delta}_k)$-optimal solution of \eqref{inner-conv-prob-conv-algoirhtm}. For this variant of Algorithm~\ref{alg:inexact-pp-2}, we can easily obtain the result of Theorem~\ref{thm:inexact-pp} under uniform boundedness of the Lagrange multiplier. Moreover, since constraints remain the same in \eqref{inner-conv-prob-conv-algoirhtm} for all $ k \ge 1$, we just need Slater's condition to ensure the uniform boundedness of $\{y_k^*\}$.
\end{remark}
In the following corollary, we state an immediate consequence of Theorem \ref{thm:inexact-pp} as well as the final complexity when using the ConEx method as subroutine to solve subproblem \ref{inner-convex-prob}. Before proceeding to the details of the corollary, we need to properly
redefine $B$ such that it satisfies $B\ge \max\{ \gnorm{y_k^*}{1}{}, \gnorm{y_k^*}{2}{}+1\}$. This allows the use of $B$ in the sense of Theorem \ref{thm:inexact-pp} as well as in the stepsize policy for 
the ConEx method in \eqref{eq:step_size}. 
\begin{corollary} \label{cor:iteration_complexity_inexact}
Under the assumptions of Theorem \ref{thm:inexact-pp},
	suppose that in Algorithm \ref{alg:inexact-pp-2}, we set $ \delta_k = c\wb{\delta}_k$ for some $c>0$, and $\wb{\delta}_k = \eps/(2c_1c_2)$, where 
	\begin{equation} \label{eq:c_params_appx_KKT}
	\begin{split}
	c_1 &= \max\braces*{2L_w, 8L_w^2(\mu_0+\mu_{\max}B)}\\
	c_2 &=c + B
	\end{split}
	\end{equation} 
	Then after running at most $K =  2c_1(\Delta_f+B\wb{\Delta}_0)/\eps$ iterations, we obtain an $\paran{\eps, \tfrac{2\eps}{\mu_0c_1} }$-KKT point of Problem \eqref{main-prob}. 
	In particular, if we run Algorithm \ref{alg-without-guess} for subproblem \eqref{inner-convex-prob}, then we obtain an $(\eps, \tfrac{2\eps}{\mu_0c_1} )$-KKT point in $O(\tfrac{1}{\eps}T_\eps)$ iterations, where $T_\eps$ is  defined in \eqref{eq:bounding_T_cor}.
\end{corollary}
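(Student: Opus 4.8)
The plan is to chain together Theorem~\ref{thm:inexact-pp} with the per-subproblem complexity of the ConEx method from Corollary~\ref{cor:iteration_counts} (equivalently Theorem~\ref{cor:step_size_strong_cvx}), after first checking that the parameter choices $\delta_k = c\wb{\delta}_k$ and $\wb{\delta}_k = \eps/(2c_1c_2)$ make the quantities $\varepsilon_K$ and $\wb\varepsilon_K$ of \eqref{eq:inexact_KKT_thm} small. First I would note that under these choices the accumulated inexactness is $\Omega_K = \tsum_{k=1}^K \delta_k + B\tsum_{k=1}^K\wb\delta_k = K(c+B)\wb\delta_k = Kc_2 \cdot \eps/(2c_1c_2) = K\eps/(2c_1)$. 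Plugging this into $\Gamma_K = \Delta_{\psi_0} + B\wb\Delta_0 + \Omega_K = \Delta_f + B\wb\Delta_0 + K\eps/(2c_1)$ and into the expressions $\varepsilon_K = c_1 \Gamma_K/K$ and $\wb\varepsilon_K = 2\Omega_K/(\mu_0 K)$, and then substituting $K = 2c_1(\Delta_f + B\wb\Delta_0)/\eps$, gives $\varepsilon_K = c_1(\Delta_f + B\wb\Delta_0)/K + \eps/2 = \eps/2 + \eps/2 = \eps$ and $\wb\varepsilon_K = 2\cdot(K\eps/(2c_1))/(\mu_0 K) = \eps/(\mu_0 c_1) \le 2\eps/(\mu_0 c_1)$. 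So after $K = O((\Delta_f+B\wb\Delta_0)/\eps)$ outer iterations we have a stochastic $(\eps, 2\eps/(\mu_0 c_1))$-KKT point; this is the first assertion.

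For the second assertion I would count the total inner iterations. Each outer iteration $k$ requires solving subproblem \eqref{inner-convex-prob}, which is a strongly convex functional constrained problem (with strong convexity modulus $\mu_0$ for the objective and $\mu_i$ for the constraints, relative to $W$), to stochastic $(\delta_k,\wb\delta_k)$-accuracy in the sense of Definition~\ref{define-apprx-opt}. Since $\delta_k$ and $\wb\delta_k$ are both of order $\eps$ (they equal $c\eps/(2c_1c_2)$ and $\eps/(2c_1c_2)$ respectively, fixed constants times $\eps$), applying Corollary~\ref{cor:iteration_counts} with target accuracy $\Theta(\eps)$ in both optimality and feasibility shows each subproblem is solved within $T_\eps$ ConEx iterations, where $T_\eps$ is the bound in \eqref{eq:bounding_T_cor} (with the role of $\alpha_0$ played by $\mu_0$, the Lipschitz/smoothness constants of $f_i + 2\mu_i\omega$ plugged in, and the redefined $B \ge \max\{\|y_k^*\|_1, \|y_k^*\|_2 + 1\}$ used both in the KKT-bound role of Theorem~\ref{thm:inexact-pp} and in the stepsize policy \eqref{eq:step_size}). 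Multiplying the number of outer iterations $K = O(1/\eps)$ by the per-subproblem cost $T_\eps$ gives the claimed $O(\tfrac1\eps T_\eps)$ total inner iteration complexity.

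The main obstacle — and the place where the argument needs care rather than being purely mechanical — is verifying that the hypotheses of both cited results are simultaneously consistent. In particular: (i) Theorem~\ref{thm:inexact-pp} requires Assumption~\ref{ass:uniform-bound}, i.e.\ $\|y_k^*\|_1 \le B$ uniformly, which must hold with the \emph{same} $B$ that enters the ConEx stepsize; the redefinition $B \ge \max\{\|y_k^*\|_1, \|y_k^*\|_2+1\}$ in the paragraph preceding the corollary is exactly what reconciles this, and I would invoke the verifiable condition \eqref{eq:strict-feasibility} (via Lemma~\ref{lem:strict-feasible-bound}, which does not use optimality of $x_k$ and hence applies to the inexact algorithm) to guarantee such a $B$ exists; (ii) one must check that solving the subproblem to $(\delta_k,\wb\delta_k)$-optimality as produced by ConEx indeed matches the input hypothesis \eqref{def_x_k_spec} of Theorem~\ref{thm:inexact-pp} — this is immediate from the definitions of a stochastic $(\delta_o,\delta_c)$-optimal solution in Definition~\ref{define-apprx-opt}, which is precisely the output guarantee of Corollary~\ref{cor:iteration_counts}; and (iii) the constants $c_1, c_2$ in \eqref{eq:c_params_appx_KKT} must be the ones appearing in $\varepsilon_K$, which they are by construction ($c_1 = \max\{2L_\omega, 8L_\omega^2(\mu_0+\mu_{\max}B)\}$ is the coefficient in \eqref{eq:inexact_KKT_thm} and $c_2 = c+B$ is the factor relating $\wb\delta_k$ to $\Omega_K/K$). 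Once these consistency points are dispatched, the remainder is the straightforward arithmetic of substituting the chosen $\delta_k,\wb\delta_k,K$ into \eqref{eq:inexact_KKT_thm}, which I sketched above.
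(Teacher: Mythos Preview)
Your proposal is correct and follows essentially the same approach as the paper's proof: substitute the chosen $\delta_k,\wb\delta_k,K$ into the bounds \eqref{eq:inexact_KKT_thm} of Theorem~\ref{thm:inexact-pp} to verify $\varepsilon_K=\eps$ and $\wb\varepsilon_K\le 2\eps/(\mu_0 c_1)$, then multiply the $O(1/\eps)$ outer iterations by the $T_\eps$ inner iterations from Corollary~\ref{cor:iteration_counts}. Your write-up is actually more thorough than the paper's, which omits the consistency discussion in your points (i)--(iii) and bounds $\wb\varepsilon_K$ slightly more loosely via $\Omega_K\le\Gamma_K$ rather than your direct computation.
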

\begin{proofx}
	Suppose $\delta_k$ and $\wb{\delta}_k$ are constants throughout Algorithm~\ref{alg:inexact-pp-2}. Then, according to \eqref{eq:inexact_KKT_thm}, we have $\eps_K \le c_1\Gamma_K/K$. Choosing given values of $\delta_k, \wb{\delta}_k$ and $K$, we have
	\begin{equation*}
	\eps_K \le c_1\tfrac{\Gamma_K}{K} =c_1 \bracket*{\tfrac{\Delta_f+B\wb{\Delta}_0}{K} + (c+B)\wb{\delta}_K} =c_1 \bracket*{\tfrac{\eps}{2c_1} + c_2\tfrac{\eps}{2c_1c_2}}=\eps.
	\end{equation*}
	Moreover, we have 
	\begin{equation*}
	\wb\varepsilon_K =\tfrac{2}{\mu_0 K}\Omega_K \le \tfrac{2}{\mu_0K}\Gamma_K \le \tfrac{2\eps}{\mu_0c_1}.	
	\end{equation*}
	Now noting that $\delta_k = \wb{\delta}_k = O(\eps)$ is a constant and using Corollary \ref{cor:iteration_counts}, we obtain $(\delta_{k}, \wb{\delta}_k)$-approximate solution of subproblem \eqref{inner-convex-prob} in $T_\eps$ iterations. Noting the definition $K$ in the statement of the corollary, we conclude the proof.
\end{proofx}

In the above corollary, we assume that the subproblem \eqref{inner-convex-prob} is solved using the ConEx method. Since $\gnorm{y_k^*}{2}{} \le \gnorm{y_k^*}{1}{}$, we have an upper bound $B$ on $\gnorm{y_k^*}{2}{}$ in \eqref{eq:bound-y-strong-feas}. Consequently, we can set the parameter $B$ of ConEx method, (call it $B_{\text{ConEx}}$ to avoid confusion with $B$ in \eqref{eq:bound-y-strong-feas}) in Theorem~\ref{cor:step_size_strong_cvx} appropriately.  In particular, we can set $B_\text{ConEx} = B +1$ which gives accelerated convergence for smooth problems. 
Moreover, if $\chi_{i}(x)$ is a simple function such that we can compute \prox{} operator in \eqref{eq:W-prox} for functions 
$\mu_iW(x, x_{k-1}) + \chi_{i}(x), \ i = 1, \dots, m,$ efficiently, then we solve each subproblem in the smooth strongly convex setting,
since $f_i, i = 1, \dots, m$ are smooth functions. Otherwise, we must include the nonsmooth convex function $\chi_{i}(x)$ in totality (or part thereof) with $f_i$, 
and then we can assume $\mu_iW(x, x_{k-1})$ is a simple function. In this case, we solve the subproblems in a nonsmooth strongly convex setting. 
We can derive from Corollary \ref{cor:iteration_complexity_inexact} 
and the definition of $T_\eps$ in \eqref{eq:bounding_T_cor} the final complexity bounds for different problem settings.
	\begin{itemize}
		\item \textbf{Smooth nonconvex case}: In this case, $T_\eps$ can be bounded $O(1/\eps^{1/2})$ in the deterministic case, 
		$O(1/\eps)$ in the semi-stochastic case and $O(1/\eps^2)$ in the fully-stochastic case. 
		Hence, in view of Corollary~\ref{cor:iteration_complexity_inexact}, we can compute an $(\eps, 2\eps/(\mu_0c_1))$-KKT point of 
		the nonconvex problem \eqref{main-prob} in $O(1/\eps^{3/2})$,  $O(1/\eps^2)$,
		and $O(1/\eps^3)$ iterations for the deterministic case, semi-stochastic case 
		and fully-stochastic cases, respectively.	
		\item \textbf{Nonsmooth nonconvex case}: In this case, $T_\eps$ can be bounded by $O(1/\eps)$ in the deterministic case, $O(1/\eps)$ in the semi-stochastic case 
		and $O(1/\eps^2)$ in the fully-stochastic case. Hence, in view of Corollary \ref{cor:iteration_complexity_inexact}, we 
		can compute an $(\eps, 2\eps/(\mu_0c_1))$-KKT point of the nonconvex problem \eqref{main-prob} in $O(1/\eps^{2})$ iterations for 
		the deterministic and semi-stochastic cases, and $O(1/\eps^3)$ iterations for the fully-stochastic case.		
	\end{itemize}
	Note that the dependence of these complexity bounds on different problem parameters can be made more precise
	in view of the definition of $T_\eps$ in \eqref{eq:bounding_T_cor}.

\section{Conclusion}
This paper focuses on stochastic first-order methods for solving both convex and nonconvex functional constrained problems.
For the convex case, we present a novel ConEx method which utilizes linear approximations of constraint functions to 
define the extrapolation step. This method is a simple and unified algorithm that can
achieve the best-known convergence rates for solving different functional constrained convex composite problems.
In particular, we show that ConEx attains a few new complexity results especially for 
the stochastic constrained setting and/or when the objective/constraint functions contains smooth components.
{
For the nonconvex case, we present new proximal point  methods which successively generate strictly feasible solutions from a sequence of strongly convex subproblems. 
Under some standard MFCQ type assumptions, we establish both asymptotic convergence to the KKT condition and iteration complexities to attain some approximate KKT points.
Under a different strong feasibility assumption, we establish the convergence of inexact proximal point methods without requiring feasible solutions. This is particularly attractive for large-scale and stochastic optimization where high accuracy is unachievable.
Efficiencies of the proximal point method which uses  ConEx to solve the subproblems are established under different problem settings.
}

\vspace{0.1in}

{\bf Acknowledgement.} The authors would like to thank Dr. Qihang Lin for a few inspiring discussions that help to improve the initial version of this work. {The authors would also like to thank an anonymous reviewer whose comments helped in a significant streamlining of the presentation of the paper.}
\bibliographystyle{acm}
\bibliography{ref-qdeng}

\appendix
\section{Proof of Proposition \ref{thm:MFCQ} \label{sec:mfcq-proof}}
Let us denote 
\begin{equation}\label{eq:convexified-functions}
	\begin{split}
		\wb{\psi}_i(x)&:= \psi_{i}(x) {+ \mu_iW(x, x^*)}, \quad i = 0, \dots, m.
	\end{split}
\end{equation}
It is easy to see that $\wb{\psi}_0(x)$ and $\wb{\psi}_i(x),\ i \in [m]$, are convex functions. Moreover, their respective subdifferentials can be written as
\begin{align*}
\pgrad \wb{\psi}_i(x) &= \{\grad f_i(x) + \mu_i  \grad W(x,x^*)\} + \pgrad \chi_i(x),
\end{align*}
{where $\grad W$ is the gradient in the first variable. Note that $\grad W(x^*, x^*) = \zero$.}
Consider the constrained convex optimization problem:
\begin{align}
\min_{x \in X} \tab &\wb{\psi}_0(x) \label{eq:pb_cvx} \\[-2mm] 
\text{s.t.} \tab &\wb{\psi}_i(x) \le 0, \tab i \in [m].\nonumber
\end{align}
Note that $x^*$ is a feasible solution of this problem. For sake of this proof, define $\Psi_k(x) := \wb{\psi}_0(x) + \tfrac{k}{2}\tsum_{i=1}^m \relu{\wb{\psi}_i(x)}^2 +\tfrac{1}{2}\gnorm{x-x^*}{2}{2}$. 
Let $S=\{x \in X: \gnorm{x-x^*}{2}{} \le \eps \}$ for some $\eps > 0$ such that any $x \in S$ which is feasible for \eqref{eq:pb_cvx}
satisfies $\wb{\psi}_0(x) \ge \wb{\psi}_0(x^*)$. Let $x_k :=\argmin_{x\in S} \Psi_k(x)$. {This is well-defined since $\Psi_k$ is a strongly convex function. Note that}
\begin{equation}\label{eq:upp-bd }
	\liminf_{k\rightarrow \infty} \Psi_k(x_k) \le\limsup_{k\rightarrow \infty}\Psi_k(x_k) \le \limsup_{k\rightarrow \infty} \Psi_k(x^*) = \wb{\psi}_0(x^*) < \infty,
\end{equation} 
{where second inequality follows from the fact that $x_k$ is optimal and $x^* \in S$ is a feasible point. 
Note that as $k \to \infty$, we have
$\limsup_{k\rightarrow \infty} \Psi_k(x_k) < \infty \Rightarrow \limsup_{k\to \infty}\wb{\psi}(x_k) \le 0$.} \\[2mm]
{Moreover, note that $\dom(\liminf_{k\rightarrow \infty} \Psi_k) \subseteq \{x: \psi_{i}(x) \le 0, i \in [m]\}$. Also note that $\dom(\liminf_{k\rightarrow \infty} \Psi_k) \cap S \ne \emptyset$ since both sets contain $x^*$. Then, definition of set $S$ implies $\wb{\psi}_0(x) \ge \wb{\psi}_0(x^*)$ for all  $x \in \dom(\liminf_{k\to \infty} \Psi_k) \cap S$. Hence, $
\liminf_{k\to \infty} \Psi_k(x_k) \ge \liminf_{k\rightarrow \infty} \wb{\psi}_0(x_k) \ge \wb{\psi}_0(x^*)$. This inequality with \eqref{eq:upp-bd } implies that $\lim_{k\rightarrow\infty} \Psi_k(x_k) = \psi_0(x^*)$ and $x_k \to x^*$}. Hence, there exists $\wb{k}$ such that for all $k > \wb{k}$, $x_k \in \inte (S)$. So for such $k$ we can write the following first-order criterion for convex optimization  ($\bracket{\wb{\psi}_{i}}_+^2$ is a convex function):
\[ \zero \in N_X(x_k) + \pgrad \wb{\psi}_0(x_k) + k\bracket{\wb{\psi}(x_k)}_+ \pgrad \wb{\psi}(x_k) + x_k-x^*. \]
This implies that $x_k$ is also the optimal solution of
\[
\min_{x\in X} \bar{\psi}_0(x)+k\,\bracket{\wb{\psi}(x_k)}_+^T {\wb{\psi}(x)}+\tfrac{1}{2}\|x-x^*\|_2^2.\]
For simplicity, let us denote $v_k=k\,\bracket{\wb{\psi}(x_k)}_+$. Due to the optimality of $x_k$ of solving the above, we have
\begin{equation}\label{mfcq:penalty-opt}
\bar{\psi}_0(x_k)+v_k^T\bar{\psi}(x_k)+\tfrac{1}{2}\|x_k-x^*\|^2\le
\bar{\psi}_0(x)+v_k^T\bar{\psi}(x)+\tfrac{1}{2}\|x-x^*\|_2^2,\quad \forall x\in X.
\end{equation}

We claim that $\{v_k\}$ is a bounded sequence. Indeed, if this is true, then we can find a convergent subsequence $\{i_k\}$ with $\lim_{k\rightarrow\infty} v_{i_k}=v^*$. Taking $k\rightarrow \infty$ in \eqref{mfcq:penalty-opt}, we have
\begin{equation}\label{ipp:sub-optimum-limit}
\limsup_{k\rightarrow\infty}\bar{\psi}_0(x_{i_k}) + {v^*}^T\bar{\psi}(x^*)\le
\bar{\psi}_0(x)+ {v^*}^T\bar{\psi}(x)+\tfrac{1}{2}\|x-x^*\|_2^2,\tab \forall x\in X.
\end{equation}
Placing $x=x^*$, we have $\bar{\psi}_0(x^*)\ge\limsup \bar{\psi}_0(x_{i_k})$, thus $\lim_{k\rightarrow \infty}\bar{\psi}_0(x_{i_k})=\bar{\psi}_0(x^*)$ based on the lower semicontinuity of $\bar{\psi}_0$. In view of this discussion, $x^*$ optimizes the right side of \eqref{ipp:sub-optimum-limit}. Thus, applying the first order criterion, we have
\[
0\in \partial\bar{\psi}_0(x^*)+\sum_{i\in [m]}{v^{(i)}}^*\partial\bar{\psi}(x^*) + N_X(x^*).
\]
It remains to apply $\partial\bar{\psi}_0(x^*)=\partial \psi_0(x^*)$ and $\partial\bar{\psi}_i(x^*)=\partial \psi_i(x^*)$.

In addition, to prove complimentary slackness, it suffices to show when $\bar{\psi}_i(x^*)=\psi_i(x^*)<0$, we must have ${v^{(i)}}^*=0$. 
Since $x_k$ converges to $x^*$ and $\bar{\psi}_i$ is continuous,  there exists some $\exists k_0>0$, such that $\bar{\psi}_i(x_{i_k})<0$  when $k>k_0$. 
Hence ${v^{(i)}_{i_k}}^*=0$ by its definition. Taking the limit, we have ${v^{(i)}}^*=0$.

It remains to show the missing piece, that $\{v_k\}$ is a bounded sequence. We will prove by contradiction. If this is not true, we may assume $\lim_{k\rightarrow \infty}\|v_k\| = \infty$, passing to a subsequence if necessary.
Moreover, define $y_k=v_k/\|v_k\|$, since $y_k$ is a unit vector, it has some limit point, let us assume $\lim_{k\rightarrow \infty}y_{j_k}=y^*$ for a subsequence $\{j_k\}$. Dividing both sides of \eqref{mfcq:penalty-opt} by $\|v_k\|$ and then passing it to  the subsequence $\{j_k\}$, we have
\[
\bar{\psi}_0(x_{j_k})/\|v_{j_k}\|+y_{j_k}^T\bar{\psi}(x_{j_k})+\tfrac{1}{2\|v_{j_k}\|}\|x_{j_k}-x^*\|^2\le
\bar{\psi}_0(x){/\|v_{j_k}\|}+y_{j_k}^T\bar{\psi}(x)+\tfrac{1}{2\|v_{j_k}\|}\|x-x^*\|^2,\tab \forall x\in X.
\]
Taking $k\rightarrow\infty$, we have 
\[
{y^*}^T\bar{\psi}(x^*)\le {y^*}^T\bar{\psi}(x),\tab \forall x\in X.
\]

Since subsequence $x_{j_k}$ converges to $x^*$ and $\wb{\psi}_i$ is continuous, we see that $\wb{\psi}_i(x_{j_k}) < 0$ for any $i \notin \AA(x^*)$ for $k \ge k_0$. This implies $y_{j_k}= j_k \relu{\wb{\psi}_i(x_{j_k})} = 0$ for all $ k \ge k_0$ and for all $i \notin \AA(x^*)$. So we must have 
$\zero \in N_X(x^*) +  \tsum_{i \in \AA(x^*)} y^\sbr{i} \pgrad \psi_i(x^*)$. {Here, we have used the fact that $\grad W(x^*, x^*) = \zero$, implying that $\partial \wb{\psi}_i(x^*) = \partial \psi_{i}(x^*)$ for all $i = 0, \dots, m$.}
Let $u \in N_X(x^*)$ and $g_i(x^*) \in \pgrad \psi_i(x^*), i \in \AA(x^*)$ be such that 
\[ u + \tsum_{i \in \AA(x^*)} y^\sbr{i}g_i(x^*) = \zero.\] 
Then we can derive a contradiction by using MFCQ (Definition~\ref{eq:mangafromo}). Assume that $z$ satisfies  MFCQ  \eqref{eq:mangafromo}. Therefore, we have
\begin{align*}
0 = z^Tu + \tsum_{i \in \AA(x^*)}y^\sbr{i} z^Tg_i(x^*) &\le\tsum_{i \in \AA(x^*)}y^\sbr{i} z^Tg_i(x^*) \\
& \le \tsum_{i \in \AA(x^*)}y^\sbr{i} \max_{v \in \pgrad \psi_i(x^*)}z^Tv < 0,
\end{align*}
where first inequality follows since $z \in -N_X^*(x^*)$ and $u \in N_X(x^*)$ hence $z^Tu \le 0$, second inequality follows due to the fact that $y^\sbr{i} \ge 0$ and $g_i(x^*) \in \pgrad \psi_i(x^*)$ and last strict inequality follows since \eqref{eq:mangafromo} and $y^\sbr{i} > 0$ for at least one $i \in \AA(x^*)$.

\section{{Proof of Proposition \ref{prop:strong-feas-suff-cond}}}
{Let us define $\wb{\psi}_i, i = 0, \dots, m$ as in \eqref{eq:convexified-functions} where} $x^*$ is a local solution of \eqref{main-prob} then, 
\begin{alignat*}{2}
	&\exists\ \eps > 0 \quad \text{ s.t.} \quad \psi_0(x) \ge \psi_0(x^*) \quad &&\text{for all } x \in \{x \in X: \psi_i(x) \le 0, i \in [m],\ \gnorm{x-x^*}{}{} < \eps\}\\
	\Rightarrow &\exists\ \eps > 0 \quad \text{ s.t.} \quad \psi_0(x) \ge \psi_0(x^*) \quad &&\text{for all } x \in \{x \in X: \wb{\psi}_i(x) \le 0, i \in [m],\ \gnorm{x-x^*}{}{} < \eps\}\\
	\Rightarrow &\exists\ \eps > 0 \quad \text{ s.t.} \quad \wb{\psi}_0(x) \ge \psi_0(x^*) = \wb{\psi}_0(x^*) \quad &&\text{for all } x \in \{x \in X: \wb{\psi}_i(x) \le 0, i \in [m],\ \gnorm{x-x^*}{}{} < \eps\},
\end{alignat*}
where the first implication follows from the fact that $\wb{\psi}_{i}(x) \ge \psi_{i}(x)$ for all $ i \in [m]$ or equivalently $\{x \in X: \wb{\psi}_i(x) \le 0, i \in [m],\ \gnorm{x-x^*}{}{} < \eps\} \subseteq \{x \in X: \psi_i(x) \le 0, i \in [m],\ \gnorm{x-x^*}{}{} < \eps\}$, and second implication follows from the fact that $\wb{\psi}_i(x) \ge \psi_{i}(x)$. 

The last statement implies that $x^*$ is a local optimal solution for the convex problem \eqref{eq:pb_cvx}. Hence, it is also a global optimal solution. Based on \eqref{eq:strict-feasibility} from Assumption {\ref{ass:strong-feasi}}, we have,
\[\wb{\psi}_{i}(\wb{x}) = \psi_{i}(\wb{x}) + \mu_iW(\wb{x}, x^*) \le -2\mu_iD_X^2 + \mu_iD_X^2 = -\mu_iD_X^2 < 0.\]
Hence, by Slater condition, we have that there exists $y^* \ge \zero$ such that $(x^*, y^*)$ satisfy first order KKT-condition for the convex problem \eqref{eq:pb_cvx}. Thus, we have 
\begin{align*}
	 \partial\bar{\psi}_0(x^*)+\sum_{i\in [m]}{y^{(i)}}^*\partial\bar{\psi}_i(x^*) + N_X(x^*) &\ni \zero,\\
	 y^\sbr{i}\wb{\psi}_i(x^*) &=0, \quad i \in [m].
\end{align*}
It remains to apply $\partial\bar{\psi}_i(x^*)=\partial \psi_i(x^*)$ and $\wb{\psi}_i(x^*) = \psi_i(x^*)$ for all $i \in 0, \dots, m$. Hence, we conclude the proof.

\end{document}